\numberwithin{equation}{section}
\definecolor{OrangeRed}{cmyk}{0,0.6,1,0}            % half magenta only, full yellow
\definecolor{DarkBlue}{cmyk}{1,1,0,0.20}
\definecolor{DarkGreen}{cmyk}{1,0,0.6,0.2}
\definecolor{myblue}{rgb}{0.66,0.78,1.00}
\definecolor{Violet}{cmyk}{0.79,0.88,0,0}
\definecolor{Lavender}{cmyk}{0,0.48,0,0}
\newtheorem{thm}{Theorem}[section]
\newtheorem{main theorem}[thm]{Main Theorem}
\newtheorem{corollary}[thm]{Corollary}
\newtheorem{lemma}[thm]{Lemma}
\theoremstyle{definition}
\newtheorem{definition}[thm]{Definition}
\newtheorem{remark}[thm]{Remark}
\def\bcases{\begin{cases}}
\def\ecases{\end{cases}}
\newcommand{\bea}{\begin{eqnarray*}}
\newcommand{\eea}{\end{eqnarray*}}
\newcommand{\be}{\begin{equation}}
\newcommand{\ee}{\end{equation}}
\renewcommand{\Re}{\mathrm{Re}\,}
\renewcommand{\epsilon}{\varepsilon}
\renewcommand{\phi}{\varphi}
\newcommand{\Vhat}{\widehat{V}}
\DeclareMathOperator{\Int}{int}
\begin{document}

\title[Limit of the zero locus of the independence polynomial]{The limit of the zero locus of the Independence Polynomial for bounded degree graphs}

\author[F. Bencs]{Ferenc Bencs}
\author[P. Buys]{Pjotr Buys}
\author[H. Peters]{Han Peters}

\today

\address{F. Bencs: Korteweg de Vries Institute for Mathematics\\
University of Amsterdam\\
the Netherlands}
\address{P. Buys: Korteweg de Vries Institute for Mathematics\\
University of Amsterdam\\
the Netherlands}% \email{pjotr.buys@gmail.com}
\address{H. Peters: Korteweg de Vries Institute for Mathematics\\
University of Amsterdam\\
the Netherlands}% \email{hanpeters77@gmail.com}

\begin{abstract}
The goal of this paper is to accurately describe the maximal zero-free region of the independence polynomial for graphs of bounded degree, for large degree bounds. In previous work with de Boer, Guerini and Regts it was demonstrated that this zero-free region coincides with the normality region of the related occupation ratios. These ratios form a discrete semi-group that is in a certain sense generated by finitely many rational maps. We will show that as the degree bound converges to infinity, the properly rescaled normality regions converge to a limit domain, which can be described as the maximal boundedness component of a semi-group generated by infinitely many exponential maps.

We prove that away from the real axis, this boundedness component avoids a neighborhood of the boundary of the limit cardioid, answering a recent question by Andreas Galanis. We also give an exact formula for the boundary of the boundedness component near the positive real boundary point. 

%Finally, our results can be used to obtain accurate and rigorous computer illustrations of the boundedness components, obtaining a much clearer picture in the large degree limit than what is known for any finite degree bound.
\end{abstract}

\maketitle

\section{Introduction}

Zeros of partition functions play an important role in quite different contexts. In statistical physics zero parameters can be used to describe phase transitions, as was shown in classical results of Lee and Yang~\cite{LYI52}. Indeed, a limit pressure function is analytic at a positive real (physical) parameter when there is a zero-free neighborhood of the parameter in the complex plane.

A different motivation arises in computational complexity theory, when describing parameters for which there exist efficient algorithms for the approximation of partition functions. It was recently shown by Patel and Regts~\cite{PaR17}, building upon work of Barvinok~\cite{Barbook}, that such algorithms exist for parameters in the zero-free domain containing the origin. In contrast to the result of Lee and Yang, the existence of an efficient algorithm holds for more general families of graphs, and for complex parameter values.

The character of zero-free domains depends not only on the family of graphs, but of course also on the partition function under consideration. In this article we will consider zeros of the independence polynomial, a topic that has seen large progress in recent years. 

The independence polynomial of a graph $G$ is the generating function of the sizes of the independent sets in $G$, i.e. 
\[
  		Z_G(\lambda) = \hspace{-20 pt} \sum_{\substack{I \subseteq V: \\ \text{$I$ is independent}}} 
				\hspace{-20 pt} \lambda^{|I|}.
\]
We note that the independence polynomial arises in statistical physics as the partition function of the hard-core model.

In this paper we will consider the zero sets of the independence polynomial for the class of all graphs of some bounded degree, and study the asymptotic behavior as the degree bound converges to infinity.

We will denote by $\mathcal{G}_d$ the class of all graphs of maximal degree at most $d+1$. We let 
$\mathcal{Z}_d$ denote the set of complex values that can occur as 
a zero of the independence polynomial of such graphs. Furthermore, we
let $\mathcal{U}_d$ denote the complement of the closure of $\mathcal{Z}_d$,
i.e. the maximal open set that is zero-free for all graphs with maximum 
degree at most $d+1$. We denote the connected component of $\mathcal{U}_d$
containing $0$ by $U_d(0)$.

A related set is given by $\mathcal{C}_d$, the maximal zero-free domain containing $0$ for the set of all $d$-ary trees of constant depth. Since those trees are contained in $\mathcal{G}_d$, it follows immediately that $\mathcal{U}_d(0) \subseteq \mathcal{C}_d$. It was shown in \cite{Galanisetal20} that approximation of the independence polynomial for graphs in $\mathcal{G}_d$ is $\#$P-hard for parameters in $\mathbb Q[i] \setminus (\mathcal{C}_d \cup \mathbb{R}_{\geq 0})$.
More recently in~\cite{BBGPR} it was shown that zeros of graphs $G \in \mathcal{G}_d$ are dense outside of $\mathcal{C}_d$, i.e. $\mathcal{U}_d \subseteq \mathcal{C}_d$. It was also shown that parameters
for which approximation of the independence polynomial is $\#$P-hard are dense in $\mathbb C \setminus \mathcal{U}_d$.

The domain $\mathcal{C}_d$ can be parameterized by
$$
\mathcal{C}_d = \left\{\frac{- d^d u}{(u + d)^{d+1}} : |u| <1\right\},
$$
see~\cite{PR19}. The intersection of $\mathcal{C}_d$ with the real axis therefore equals the interval $\left(\frac{-d^d}{(d+1)^{d+1}},\frac{d^d}{(d-1)^{d+1}}\right)$. It was shown in \cite{PR19} that this real interval is contained in $\mathcal{U}_d$, a fact conjectured earlier by Sokal~\cite{Scott_2005}. Thus, the sets $\mathbb R \cap \mathcal{U}_d$ and $\mathbb R \cap \mathcal{C}_d$ are equal, a statement that can be interpreted as an extremality result for the class of $d$-ary trees of constant depth within the graphs of maximal degree at most $d+1$.

Naturally it was asked whether equality of the sets $\mathcal{U}_d$ and $\mathcal{C}_d$ holds in the complex domain as well. It was shown in \cite{Buys21} that this is not the case: there are graphs in $\mathcal{G}_d$  for all $2 \leq d \le 8$ with zeros inside $\mathcal{C}_d$. While it is likely that the methods from \cite{Buys21} extend to arbitrarily high degrees, the proof involves computer calculations which can only run for relatively small degrees. The results in our current paper imply that the negative answer from \cite{Buys21} holds for large degrees $d$ as well.

\medskip

A second natural question asks whether the zero-parameters inside $\mathcal{C}_d$ converge to the boundary of the (properly rescaled) sets $\mathcal{C}_d$ as $d\rightarrow \infty$, thus disappearing in the limit. We will see that this is not the case. The sets $\mathcal{C}_d$ shrink down to the origin as $d \rightarrow \infty$, hence the sets $\mathcal{U}_d \subset \mathcal{C}_d$ do too. However, the rescaled sets $d \cdot \mathcal{C}_d$ converge (in terms of the Hausdorff distance) to a limit set $\mathcal{C}_\infty$, given by
\begin{equation}
    \label{eq: Cinfty}
\mathcal{C}_\infty = \{ -u e^{-u}\; : \; |u| <1\},
\end{equation}
see the illustration in Figure \ref{fig:cardioid}. This ``infinite-degree cardioid'' intersects the real axis in the open interval $(-e^{-1}, e)$.

In this paper we describe the limit of the scaled zero-free regions
$d \cdot \mathcal{U}_d$. We define a complex domain 
$\mathcal{U}_\infty$ (see Definition~\ref{def: Uinfty}), show
that it is the limit of the sets $d \cdot \mathcal{U}_d$, and determine some geometric properties of $\mathcal{U}_\infty$.
Our main results are summarized in the following two theorems.  
The locations of their proofs within the paper can be found at the end of the introduction.

\begin{thm}
    \label{thm: theorem A}
    The sets $d \cdot \mathcal{U}_d$ converge to
    $\mathcal{U}_\infty$ in terms of the Hausdorff distance.
\end{thm}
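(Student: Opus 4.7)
The plan is to reformulate both sides of the claimed convergence in terms of dynamical semi-groups and show that the rescaled finite-degree semi-groups Hausdorff-converge to the infinite exponential semi-group defining $\mathcal{U}_\infty$. The input I will assume is the characterization of $\mathcal{U}_d$ (via the prior work of de Boer--Guerini--Peters--Regts and of \cite{BBGPR}, already used in the introduction) as the set of parameters $\lambda$ admitting a neighborhood on which the family of all compositions of the finitely many occupation-ratio rational maps $F_{d,0},\dots,F_{d,d}$ is normal. Conjugating by $\lambda\mapsto\lambda/d$ produces rescaled generators $\widetilde F_{d,k}(z):=d\,F_{d,k}(z/d)$, and the underlying asymptotic $(1+w/d)^d\to e^w$ shows that $\widetilde F_{d,k}\to F_{\infty,k}$ locally uniformly on $\C$ as $d\to\infty$, where the $F_{\infty,k}$ are the exponential generators whose boundedness component at $0$ (Definition~\ref{def: Uinfty}) is $\mathcal{U}_\infty$. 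Hausdorff convergence then reduces to the two one-sided inclusions $\limsup(d\cdot\mathcal{U}_d)\subseteq\overline{\mathcal{U}_\infty}$ and $\mathcal{U}_\infty\subseteq\liminf(d\cdot\mathcal{U}_d)$, since the rescaled sets are contained in $d\cdot\mathcal{C}_d$, and the parameterizations cited in the introduction already give $d\cdot\mathcal{C}_d\to\mathcal{C}_\infty$, compactly localizing the problem.

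For the upper inclusion I would argue by contradiction. Let $\mu_j\in\mathcal{U}_{d_j}$ satisfy $d_j\mu_j\to\lambda\notin\overline{\mathcal{U}_\infty}$. Since $\lambda$ lies outside the boundedness component at $0$ for the exponential semi-group, there is a finite word $G=F_{\infty,k_1}\circ\cdots\circ F_{\infty,k_N}$ whose iterates are unbounded on every neighborhood of $\lambda$. Using that each $\widetilde F_{d_j,k_i}$ approximates $F_{\infty,k_i}$ uniformly on a fixed compact set containing all relevant intermediate orbit points, the analogous composition applied at $d_j\mu_j$ also blows up, contradicting normality of the corresponding family at $\mu_j$. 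The one subtlety is keeping intermediate orbit points inside a uniform compact as the word is composed; this is handled by shrinking neighborhoods and choosing $G$ so that the exponential orbit only escapes on the final step.

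The lower inclusion $\mathcal{U}_\infty\subseteq\liminf(d\cdot\mathcal{U}_d)$ is the more delicate direction and I expect it to be the main obstacle. Given $\lambda\in\mathcal{U}_\infty$, there is an open neighborhood $W$ of $\lambda$ on which \emph{every} composition of the infinitely many exponential generators takes values in a fixed compact $K\subset\C$. To transfer this boundedness to the rescaled finite semi-group one needs the convergence $\widetilde F_{d,k}\to F_{\infty,k}$ to be uniform in $k$ as well, not merely in $z$ on compacts. The mechanism I would exploit is that for $k$ large, both $F_{\infty,k}$ and $\widetilde F_{d,k}$ contract $K$ into an arbitrarily small neighborhood of $0$ (the small-argument regime of $e^{\,\cdot\,}$ and of $(1+\cdot/d)^d$), so beyond some cutoff $k_0$ the generators no longer affect boundedness in a significant way, and only finitely many maps need to be controlled. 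Once reduced to finitely many generators, locally uniform convergence together with Montel's theorem transfers normality from the exponential semi-group to the rescaled finite-degree one on a slightly smaller neighborhood of $\lambda/d$. This yields $\lambda/d\in\mathcal{U}_d$ for all large $d$, completing the Hausdorff convergence.
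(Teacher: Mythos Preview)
Your proposal has genuine gaps in both directions, and the framework is not quite set up correctly.

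\textbf{Setup.} The rescaled generators are not indexed by a fixed integer $k$ with a well-defined limit $F_{\infty,k}$. The one-variable maps are $\widetilde F_{d,k}(Z)=\Lambda/(1+Z/d)^k$ for $0\le k\le d$, and their limit as $d\to\infty$ depends on the ratio $s=k/d\in[0,1]$: one obtains $E_{\Lambda,s}(Z)=\Lambda e^{-sZ}$. So the limiting semi-group has a \emph{continuum} of generators, and your reduction ``beyond some cutoff $k_0$ the generators contract $K$ toward $0$'' is false: the generators with $k\approx d$ converge to $E_{\Lambda,1}=E_\Lambda$, not to a contraction.

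\textbf{Lower inclusion.} This is where the real content lies and where your argument breaks. Knowing that every exponential composition maps a neighborhood $W$ of $\Lambda$ into a fixed compact $K$ does not survive passage to finite $d$: the error $\widetilde F_{d,k}-E_{\Lambda,k/d}$ is only $O(1/d)$ on $K$, but you must compose arbitrarily many such maps, and there is no contraction available to absorb accumulated errors. The paper's solution is to produce, for each $\Lambda\in\mathcal U_\infty$, a bounded convex set $K_\Lambda\ni 0$ that is \emph{strictly} invariant, meaning $\overline{E_\Lambda(K_\Lambda)}\subset \Int K_\Lambda$. Strict invariance gives a fixed margin that survives the $O(1/d)$ perturbation, so $K_\Lambda/d$ is forward invariant for all the finite-degree maps $F_{\Lambda/d}$ once $d$ is large, yielding zero-freeness by Lemma~\ref{lemma:invariance}. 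Constructing such a $K_\Lambda$ is nontrivial (Lemmas~\ref{lem: convex} and \ref{lemma:positive strict invariant}); it uses star-convexity of $\mathcal V_\infty$ and a careful modification of $\Vhat_{t\Lambda}$ for some $t>1$ in the nonreal case, and a logarithmic change of coordinates making $E_\Lambda$ a strict contraction on $[0,\Lambda]$ in the positive real case. None of this is captured by a normality/Montel transfer.

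\textbf{Upper inclusion.} Your argument ``find a word that escapes and approximate'' fails for $\Lambda\in(e,\infty)$: such $\Lambda$ lie outside $\overline{\mathcal U_\infty}$ yet have $V_\Lambda\subset[0,\Lambda]$ bounded, so no composition blows up. The paper handles this case separately using that $(e,\infty)$ lies outside $\mathcal C_\infty$ and that zeros are dense outside the cardioid. For nonreal $\Lambda\notin\mathcal V_\infty$ the paper does use that $V_\Lambda=\mathbb C$ (Lemma~\ref{lem: Vhat is C}) to find a $g_\Lambda$ with $|g_\Lambda(0)|$ large, but then to conclude $\Lambda/d\in\overline{\mathcal Z_d}$ one still needs the implementation lemma (Lemma~\ref{lem: implementation}) together with density of zeros outside $\mathcal C_{\lfloor d/2\rfloor}$; a single large value does not by itself produce a zero or non-normality of the finite-degree family.
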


In other words, for any closed $K_1 \subseteq \mathcal{U}_\infty$ and any open $K_2 \supseteq \overline{\mathcal{U}_\infty}$ there exists a $d_0$ such that $K_1 \subseteq d \cdot \mathcal{U}_d \subseteq K_2$ for $d \geq d_0$.

\begin{thm}
    \label{thm: theorem B}
    \begin{enumerate}
        \item 
        \label{item: item star convex}
        The set $\mathcal{U}_\infty$ is star-convex from $0$, i.e. if 
        $\Lambda \in \mathcal{U}_\infty$ then $c \cdot \Lambda \in \mathcal{U}_\infty$ for all $c \in [0,1]$.
        \item
        \label{item: item (a) theorem B}
        The boundaries of $\mathcal{U}_\infty$ and $\mathcal{C}_\infty$ intersect only in the two real parameters $-e^{-1}$ and $e$. In particular, the smaller set $\mathcal{U}_\infty$ avoids a neighborhood of any non-real boundary point of $\mathcal{C}_\infty$.
        \item Near the positive real boundary point $e$
        % $\frac{d^d}{(d-1)^{d+1}}$
        \label{item: gamma curve}
        the boundary of $\mathcal{U}_\infty$ is contained in an analytic curve $\Gamma$ (given explicitly in Corollary~\ref{cor: curve Gamma}) and its complex conjugate $\overline{\Gamma}$, and is in particular piece-wise analytic, but not smooth. 
    \end{enumerate}
\end{thm}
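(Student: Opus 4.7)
The plan is to exploit the description of $\mathcal{U}_\infty$ (see Definition~\ref{def: Uinfty}) as the maximal boundedness component of the limiting semi-group of exponential maps, and to use Theorem~A to transfer properties between the finite-degree sets $U_d(0)$ and the limit where convenient.

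For part~\eqref{item: item star convex}, the natural route is to first establish that each $U_d(0)$ is star-convex from $0$ and then pass this property to the limit via Theorem~A: star-convexity from a fixed interior point is preserved under Hausdorff limits of open sets. At finite degree, star-convexity of $U_d(0)$ can be argued by a monotonicity principle for the occupation-ratio recursion: scaling the parameter $\lambda$ by $c \in [0,1]$ uniformly contracts the image of each generator of the associated semi-group, so a bounded forward-invariant region at $\lambda$ remains bounded and forward-invariant at $c\lambda$. Alternatively one may argue directly in the infinite-degree setting, using that each generator of the exponential semi-group is $\mathbb{C}$-linear in the parameter so that the orbit of $0$ at $c\Lambda$ is term-by-term dominated by the orbit at $\Lambda$.

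For part~\eqref{item: item (a) theorem B}, fixed-point analysis of the primary exponential generator $\phi_\Lambda(z) = \Lambda e^{-z}$ shows that a boundary point $\Lambda_0 = -u e^{-u} \in \partial \mathcal{C}_\infty$ corresponds to an indifferent fixed point of multiplier $u \in \partial \mathbb{D}$. When $u \neq \pm 1$ this multiplier is genuinely non-real, and one can then select among the infinitely many semi-group generators (or compositions thereof) one whose linearization at the approximate fixed point has modulus bounded away from $1$ uniformly on a neighborhood of $\Lambda_0$. This uniform expansion forces escape of orbits for every parameter in a small neighborhood of $\Lambda_0$, which by definition places that neighborhood outside $\mathcal{U}_\infty$. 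The two exceptional points $-e^{-1}$ and $e$ correspond to $u = 1$ and $u = -1$, where the multiplier is real, and here the obstruction degenerates; indeed the explicit real zero-free intervals from \cite{PR19} show that $\mathcal{U}_\infty$ reaches all the way to these two real boundary points.

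For part~\eqref{item: gamma curve}, at $\Lambda = e$ the fixed point $z^* = 1$ of $\phi_\Lambda$ has multiplier $-1$, a classical period-doubling bifurcation. For $\Lambda$ slightly past $e$ the loss of normality is governed by a $2$-cycle of $\phi_\Lambda$, so one writes the system $\phi_\Lambda(z_1) = z_2$, $\phi_\Lambda(z_2) = z_1$ (with $z_1 \neq z_2$) and imposes the indifference condition $|\phi_\Lambda'(z_1)\phi_\Lambda'(z_2)| = 1$; these equations can be solved in closed form to give the explicit analytic parametrization of $\Gamma$ promised in Corollary~\ref{cor: curve Gamma}. The conjugate symmetry of the whole setup produces the mirror arc $\overline{\Gamma}$, and the two analytic arcs meet transversally at the real point $e$, accounting for the piecewise-analytic but non-smooth behaviour. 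The hardest step, I expect, is part~\eqref{item: item (a) theorem B}: the semi-group is generated by infinitely many maps, so one must identify a single escaping composition for each non-real $\Lambda_0 \in \partial \mathcal{C}_\infty$ and control its derivative uniformly on a full complex neighbourhood of $\Lambda_0$, rather than merely proving instability at $\Lambda_0$ itself.
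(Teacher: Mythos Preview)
Your plan has genuine gaps in all three parts; the most serious is in part~(\ref{item: gamma curve}).

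\textbf{Part~(\ref{item: gamma curve}).} You treat the boundary near $e$ as coming from a period-doubling bifurcation of the single map $\phi_\Lambda(z)=\Lambda e^{-z}$, i.e.\ from an indifferent $2$-cycle of $E_\Lambda$. This misses the essential feature of the semi-group: $G_\Lambda$ contains not only iterates of $E_\Lambda$ but also $E_{\Lambda,c}(z)=\Lambda e^{-cz}$ for every $c\in[0,1]$. The curve $\Gamma$ in the paper does \emph{not} come from a $2$-cycle of $E_\Lambda$; it comes from a multiplier-$1$ fixed point of $H_{\Lambda,c}=E_\Lambda\circ E_{\Lambda,c}$ for a specific $c=\hat c(\theta)\in(0,1)$ depending on $\arg\Lambda$. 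One must then both exclude $\hat\Lambda(\theta)$ from $\mathcal U_\infty$ (via a lemma that any neutral fixed point of a $g\in G_\Lambda$ inside an invariant convex set forces $\Lambda\notin\mathcal U_\infty$) and include it in $\mathcal V_\infty$ (by constructing an explicit forward-invariant convex region $T_\theta$). Your $2$-cycle curve corresponds to fixing $c=1$; it agrees with $\Gamma$ only at $\theta=0$ and does not describe $\partial\mathcal U_\infty$ for $\theta>0$.

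\textbf{Part~(\ref{item: item star convex}).} The claim that the orbit of $0$ at $c\Lambda$ is ``term-by-term dominated'' by the orbit at $\Lambda$ is false: already for $E_\Lambda^2$ one needs $|c\Lambda e^{-c\Lambda}|\le|\Lambda e^{-\Lambda}|$, i.e.\ $c\le e^{(c-1)\mathrm{Re}\,\Lambda}$, which fails e.g.\ for $\Lambda=2.5\in\mathcal U_\infty$ and $c=1/2$. The paper instead works with the convex hull $\widehat V_\Lambda$ and observes $E_{c\Lambda}(\widehat V_\Lambda)=c\cdot E_\Lambda(\widehat V_\Lambda)\subseteq c\cdot\widehat V_\Lambda\subseteq\widehat V_\Lambda$, the last step using convexity and $0\in\widehat V_\Lambda$. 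Your alternative route through Theorem~A is also problematic: star-convexity of $U_d(0)$ is nowhere established, and Theorem~A is proved only after the structural results on $\mathcal U_\infty$, so invoking it here risks circularity.

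\textbf{Part~(\ref{item: item (a) theorem B}).} The sketch ``pick a generator with derivative bounded away from $1$; expansion forces escape'' does not match the actual mechanism and is not easily made rigorous (the neutral fixed point need not lie on the orbit of $0$, and a single expanding map in the semi-group does not by itself force unboundedness). The paper proceeds differently: it proves a dichotomy that for nonreal $\Lambda\notin\mathcal V_\infty$ one has $V_\Lambda=\mathbb C$ (not merely unbounded), and uses Denjoy--Wolff together with the classification of invariant Fatou components to show that for nonreal $\Lambda\in\mathcal V_\infty$ the map $E_\Lambda$ must have an attracting fixed point or a parabolic one with multiplier exactly $1$. Since on $\partial\mathcal C_\infty\setminus\mathbb R$ the multiplier is neutral but $\neq 1$, those points are excluded from $\mathcal V_\infty$; a separate closedness argument for $\mathcal V_\infty$ then yields the full neighbourhood.
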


\begin{figure}[ht]
\centering
\includegraphics[width=0.6\textwidth]{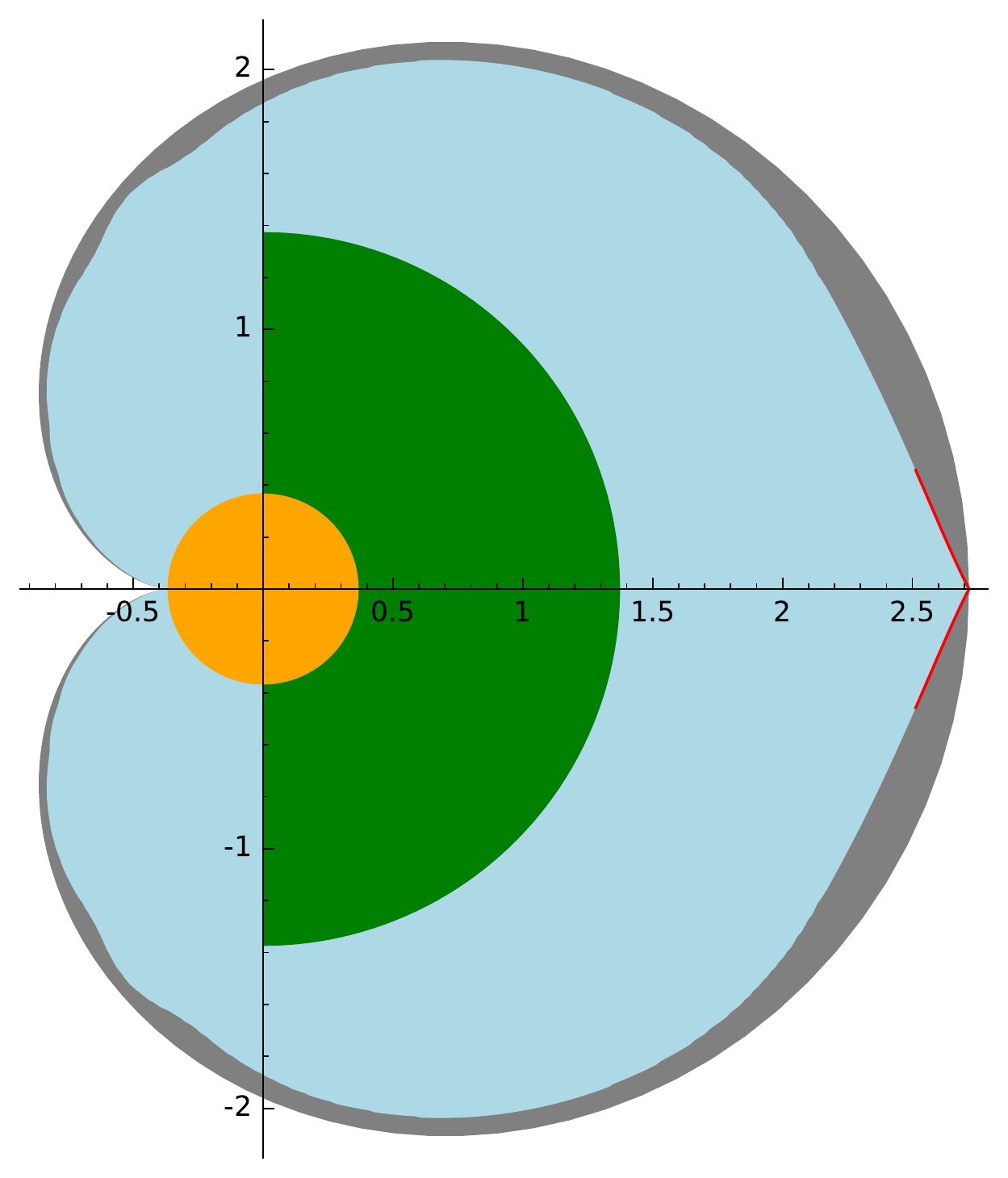}
\caption{The cardioid $\mathcal{C}_\infty$, with known zero-free regions depicted in yellow and green, and the curve $\Gamma$ depicted in red. The gray strip %$\mathcal{N}_\infty$
 near the boundary of the cardioid is known not to be zero-free anywhere, i.e.\ it does not intersect $\mathcal{U}_\infty$. The algorithm used to compute the gray region relies upon Lemma \ref{lem: 0 on the boundary}.}
\label{fig:cardioid}
\end{figure}

As an immediate consequence of Theorems \ref{thm: theorem A} and \ref{thm: theorem B} we obtain the following:

\begin{corollary}
For all sufficiently large degrees $d\in \mathbb N$ the set $\mathcal{U}_d$ is strictly smaller than $\mathcal{C}_d$.
\end{corollary}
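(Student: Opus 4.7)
The plan is to combine the Hausdorff convergence of Theorem~\ref{thm: theorem A} with the boundary-separation statement of Theorem~\ref{thm: theorem B}(\ref{item: item (a) theorem B}). Since the inclusion $\mathcal{U}_d \subseteq \mathcal{C}_d$ is already known, it suffices to exhibit, for each sufficiently large $d$, a parameter $\lambda_d \in \mathcal{C}_d \setminus \mathcal{U}_d$.

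First I would produce a witness point in the limit cardioid that is bounded away from $\overline{\mathcal{U}_\infty}$. By Theorem~\ref{thm: theorem B}(\ref{item: item (a) theorem B}) the closure $\overline{\mathcal{U}_\infty}$ is disjoint from an open neighborhood $N$ of some non-real boundary point $z_0 \in \partial \mathcal{C}_\infty$. Since $z_0$ is accumulated by points of the open cardioid $\mathcal{C}_\infty$, the set $N \cap \mathcal{C}_\infty$ is non-empty, so I can fix $z = -u_0 e^{-u_0} \in N \cap \mathcal{C}_\infty$ with $|u_0| < 1$, and choose $r > 0$ with $\overline{B(z,r)} \cap \overline{\mathcal{U}_\infty} = \emptyset$.

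Next I lift $z$ to genuine parameters inside the $\mathcal{C}_d$ using the explicit parametrization recalled in the introduction: set
\[
\lambda_d := \frac{-d^{d}\, u_0}{(u_0+d)^{d+1}} \in \mathcal{C}_d,
\qquad
z_d := d \cdot \lambda_d = -\frac{u_0}{(1 + u_0/d)^{d+1}}.
\]
Since $(1+u_0/d)^{d+1} \to e^{u_0}$, one has $z_d \to z$, so $z_d \in B(z,r/2)$ for all sufficiently large $d$. Now apply Theorem~\ref{thm: theorem A} with the open set $K_2 := \mathbb{C} \setminus \overline{B(z,r/2)}$, which contains $\overline{\mathcal{U}_\infty}$ by the choice of $r$: there exists $d_0$ so that $d \cdot \mathcal{U}_d \subseteq K_2$, and therefore $z_d \notin d \cdot \mathcal{U}_d$, for all $d \geq d_0$. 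Equivalently $\lambda_d \notin \mathcal{U}_d$, which gives $\mathcal{U}_d \subsetneq \mathcal{C}_d$ as soon as $d$ is large enough. No genuine obstacle is expected beyond this bookkeeping: the real mathematical content lives in the two preceding theorems, and the only subtle point is matching the Hausdorff convergence of $d\cdot\mathcal{C}_d$ to $\mathcal{C}_\infty$ (made explicit through the parametrization) with the one supplied by Theorem~\ref{thm: theorem A} for $d\cdot\mathcal{U}_d$, at the common witness point $z$.
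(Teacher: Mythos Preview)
Your argument is correct and is precisely the unpacking the paper has in mind: the paper states the corollary as an immediate consequence of Theorems~\ref{thm: theorem A} and~\ref{thm: theorem B}, and you have filled in exactly those details by producing a witness $z \in \mathcal{C}_\infty \setminus \overline{\mathcal{U}_\infty}$ from Theorem~\ref{thm: theorem B}(\ref{item: item (a) theorem B}), lifting it via the explicit parametrization to $\lambda_d \in \mathcal{C}_d$, and using the $K_2$-half of Theorem~\ref{thm: theorem A} to exclude $\lambda_d$ from $\mathcal{U}_d$ for large $d$.
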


Considering the result from \cite{Buys21} we expect that $\mathcal{U}_d$ and $\mathcal{C}_d$ are unequal for any $d \ge 2$. Theorem~\ref{thm: theorem B} in fact shows that as $d \rightarrow \infty$, the difference between the rescaled seta $d\cdot \mathcal{U}_d$ and $d\cdot \mathcal{C}_d$ does not vanish, which answers a question of Andreas Galanis posed at a recent conference.

\subsection{Known subsets of $\mathcal{U}_d$.}

There is an extensive history of estimates on the zero-free regions for
the independence polynomial for different classes of graphs. We discuss some of the most
relevant results here, focusing only on the class of graphs $\mathcal{G}_d$ and
the region $\mathcal{U}_d$.

\begin{thm}[\cite{Shearer_1985,Scott_2005}]
	For any $d \ge 2$ the disk centered at $0$ with radius $d^d/(d+1)^{(d+1)}$
	is contained in $\mathcal{U}_d$.
\end{thm}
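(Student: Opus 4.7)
The plan is to use the occupation-ratio tree-recursion method together with an invariant disk argument. For a graph $G$ and a vertex $v\in V(G)$, define the occupation ratio
$$R_{G,v}(\lambda) := \frac{Z_{G - N[v]}(\lambda)}{Z_{G - v}(\lambda)},$$
where $N[v]$ is the closed neighbourhood of $v$. The identity $Z_G(\lambda) = Z_{G-v}(\lambda)\,(1 + \lambda R_{G,v}(\lambda))$ reduces the non-vanishing of $Z_G$, via an induction on $|V(G)|$, to proving that $1 + \lambda R_{G,v}(\lambda) \neq 0$ for every $G \in \mathcal{G}_d$. On a rooted tree, if the root $v$ has children with subtree ratios $R_1, \ldots, R_k$, $k \leq d+1$, then
$$R_{G,v} = \prod_{i=1}^{k} \frac{1}{1+\lambda R_i}.$$
Weitz's self-avoiding-walk tree construction extends this recursion to an arbitrary graph without changing the degree bound, so it suffices to treat rooted trees of maximum degree at most $d+1$.

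The heart of the argument is then to exhibit a closed disk $D\subset\mathbb{C}$ with $1\in D$ and $-1/\lambda\notin D$, invariant in the sense that $\prod_{i=1}^{k} (1+\lambda z_i)^{-1}\in D$ whenever $z_1,\ldots,z_k\in D$ and $1\leq k\leq d+1$. The candidate disk is motivated by the real extremal case: at $\lambda^{\ast}=-d^d/(d+1)^{d+1}$, the downward $d$-ary tree recursion $R=(1+\lambda R)^{-d}$ has a double fixed point at $R^{\ast}=1/d$. We thus try $D=\overline{B(R^{\ast},r)}$ and search for the largest radius $r$ compatible with the invariance, uniformly over $\lambda$ in the target disk.

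The principal obstacle is verifying the invariance of $D$ throughout the complex parameter disk. The binding constraint is $k=d+1$; by a maximum-modulus argument it suffices to check invariance when all $z_i$ lie on $\partial D$, and by the multiplicative symmetry of the recursion the worst case is attained on the diagonal $z_1=\cdots=z_{d+1}$, reducing the problem to a one-variable extremal computation along the circle $|z-R^{\ast}|=r$. A direct parametric calculation, optimising over the argument of $\lambda$ and the boundary point, shows that admissible radii $r>0$ exist precisely for $|\lambda|\leq d^d/(d+1)^{d+1}$, matching the extremal calibration. Granted this, induction on the depth of the tree (with base case $R=1\in D$ at the leaves) yields $R_{G,v}\in D$ for every tree under consideration, so $1+\lambda R_{G,v}\neq 0$ by construction of $D$, and the theorem follows.
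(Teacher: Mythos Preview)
The paper does not prove this statement; it is quoted from Shearer and Scott--Sokal as a known result. Your overall strategy---tree recursion plus a forward-invariant region---is indeed the standard one, and is exactly what the paper's Lemma~\ref{lemma:invariance} packages. But the execution has real gaps. The calibration is wrong: the parabolic fixed point of $R\mapsto(1+\lambda R)^{-d}$ at $\lambda^\ast=-d^d/(d+1)^{d+1}$ sits at $R^\ast=((d+1)/d)^d$, not $1/d$ (from the multiplier condition one gets $\lambda^\ast R^\ast=-1/(d+1)$, and then the fixed-point equation gives $R^\ast=(d/(d+1))^{-d}$). The reduction to the diagonal is unjustified: for a disk $D$ centred at $c\neq 0$ you are maximising $\bigl|\prod_i g(z_i)-c\bigr|$ over $z_i\in\partial D$, which is \emph{not} the same as maximising the modulus of the product, so ``multiplicative symmetry'' does not force the extremum onto $z_1=\cdots=z_k$. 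Finally, the decisive ``direct parametric calculation'' over all $\arg\lambda$ is asserted but never performed, and for a disk centred at a fixed real point there is no reason it should succeed uniformly over the complex parameter disk.

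The clean argument avoids all of this. Work in the paper's variable $z=\lambda R$, where the recursion becomes $F_\lambda(z_1,\dots,z_k)=\lambda\prod_i(1+z_i)^{-1}$ with $k\le d$ (root degree $\le d$ suffices by Lemma~\ref{lem: -1}). The closed disk $\{|z|\le 1/(d+1)\}$, which is centred at the origin, is forward invariant by a one-line estimate: if each $|z_i|\le 1/(d+1)$ then $|1+z_i|\ge d/(d+1)$ and hence $|F_\lambda|\le |\lambda|\,((d+1)/d)^d\le 1/(d+1)$ whenever $|\lambda|\le d^d/(d+1)^{d+1}$. This disk contains $0$ and avoids $-1$, so Lemma~\ref{lemma:invariance} finishes the proof with no optimisation, no diagonal reduction, and no dependence on $\arg\lambda$.
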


Since zeros accumulate on the point $-d^d/(d+1)^{(d+1)}$, the radius of this so-called
Shearer disk is sharp.

\begin{thm}[\cite{bencs2018note}]
	For any $d \ge 2$ the semi-disk given by the intersection of the disk of
	radius $\frac{7}{8}\tan\left(\frac{\pi}{2d}\right)$ centered at $0$ with
	the right half plane is contained in $\mathcal{U}_d$.
\end{thm}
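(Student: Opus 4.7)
The plan is to reduce the nonvanishing question to the analysis of the tree recursion for the occupation ratios, and then to exhibit an invariant region whose geometry forces the radius bound $\tfrac{7}{8}\tan(\pi/(2d))$.

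For any graph $G\in\mathcal{G}_d$ and any vertex $v\in V(G)$ the identity
\[
Z_G(\lambda)\;=\;Z_{G-v}(\lambda)\bigl(1+R_v(G,\lambda)\bigr),\qquad R_v(G,\lambda):=\lambda\,Z_{G-N[v]}(\lambda)/Z_{G-v}(\lambda),
\]
allows an induction on $|V(G)|$, provided the intermediate ratios never equal $-1$. By Weitz's self-avoiding walk tree construction, $R_v(G,\lambda)$ coincides with the root ratio of a finite rooted tree $T$ of maximum degree at most $d+1$, with $0/\infty$ boundary conditions at the leaves, so it suffices to control the tree recursion
\[
R_v \;=\; \lambda\prod_{i=1}^{k}\frac{1}{1+R_{c_i}}
\]
along $T$, where $k\le d$ at non-root internal vertices, $k\le d+1$ at the root, and the base case is $R_v=\lambda$ at each leaf.

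The next step is to identify an invariant region $K\subset\mathbb{C}$ for the non-root recursion, chosen so that $\lambda\in K$ and $1+R\neq 0$ for all $R\in K$. A natural candidate is a bounded wedge
\[
K \;=\; \{R\in\mathbb{C}\;:\;|\arg R|\le\beta,\;|R|\le M\}
\]
around the positive real axis with $\beta<\pi/2$. The key geometric lemma is that $R\in K$ implies $|\arg(1+R)|\le\pi/(2d)$, so that a product of $k\le d$ inverse factors rotates arguments by at most $\pi/2$, which combined with $|\arg\lambda|\le\pi/2$ keeps the ratios in $K$. At the root, where $k$ may equal $d+1$, one only needs the weaker conclusion $R_v\neq -1$, which can be arranged by exploiting the remaining modulus slack. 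Throughout, one must simultaneously verify the modulus constraint $|R_v|\le M$, and the interplay between the argument and modulus recursions is what fixes the optimal relation between $\beta$, $M$, and $|\lambda|$.

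The main obstacle will be to carry this through sharply enough to recover the exact constant $\tfrac{7}{8}\tan(\pi/(2d))$. Heuristically, $\tan(\pi/(2d))$ is the geometrically natural quantity: it is the modulus at which a $d$-fold iteration of the ``argument step'' of size $\pi/(2d)$ exactly fills the right half-plane, so any $\lambda$ of smaller modulus cannot generate a recursion that escapes the wedge. The slack factor $\tfrac{7}{8}$ reflects both the need for strict (rather than marginal) invariance of $K$ and a quantitative buffer absorbing the extra root-step freedom when $k=d+1$. Balancing these constraints—in particular coordinating the argument and modulus estimates as they propagate through up to $d$ compositions—is the technical heart of the proof; once a suitable $K$ is produced, nonvanishing of $Z_G(\lambda)$ follows immediately from the inductive reduction above.
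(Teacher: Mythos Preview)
This theorem is quoted from \cite{bencs2018note} and is not proved in the present paper, so there is no proof here against which to compare your attempt.

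Evaluated on its own, your sketch has a genuine gap at the invariance step. You take $K=\{R:|\arg R|\le\beta,\ |R|\le M\}$ with $\beta<\pi/2$, assert that $R\in K$ forces $|\arg(1+R)|\le\pi/(2d)$, and conclude that a product of $k\le d$ inverse factors rotates the argument by at most $\pi/2$, ``which combined with $|\arg\lambda|\le\pi/2$ keeps the ratios in $K$.'' But summing those two contributions gives only
\[
|\arg R_v|\;\le\;|\arg\lambda|+k\cdot\tfrac{\pi}{2d}\;\le\;\tfrac{\pi}{2}+\tfrac{\pi}{2}\;=\;\pi,
\]
not $|\arg R_v|\le\beta<\pi/2$. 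So the wedge is \emph{not} forward invariant for $\lambda$ near the imaginary axis: already after one step of the recursion the ratio can land in the left half-plane, and the induction collapses. This is not a detail that the unspecified ``balancing'' of modulus and argument constraints will absorb; it is the central obstruction, and the invariant region you propose simply does not close up.

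Your heuristic that $\tan(\pi/(2d))$ is the natural scale (via $\arctan M\le\pi/(2d)$ when $\Re R\ge 0$ and $|R|\le M$) is correct, as is the observation that $|1+R|\ge 1$ on the closed right half-plane, which makes the modulus bound $|R_v|\le|\lambda|$ trivially self-improving there. What is missing is a region that genuinely tolerates the extra $\pi/2$ of rotation coming from $\arg\lambda$ itself. A workable argument must either allow controlled excursions of $R$ into the left half-plane while keeping $|R|$ small enough to avoid $-1$, or track a different quantity than $\arg R$ (for instance an invariant on $1+R$ or on a conformal image of the ratio). Until you specify such a region and verify its invariance, the proposal remains a plan rather than a proof.
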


Due to Theorem \ref{thm: theorem A} we immediately obtain the following:

\begin{corollary}
	The Shearer disk $D_{e^{-1}}(0)$ and the semi-disk $D_{7\pi/16}(0)\cap\{Z:\textrm{Re}(Z)>0\}$
	are contained in $\mathcal{U}_\infty$.
\end{corollary}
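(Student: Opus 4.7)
My plan is to derive the corollary directly from Theorem~\ref{thm: theorem A}, combined with the two cited inclusions of Shearer--Scott and Bencs--Csikv\'ari, via an elementary asymptotic computation.

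First I would check the scaling limits. For the Shearer radius,
\[
d\cdot\frac{d^d}{(d+1)^{d+1}} \;=\; \left(\frac{d}{d+1}\right)^{d+1} \;=\; \left(1+\tfrac{1}{d}\right)^{-(d+1)} \;\nearrow\; e^{-1},
\]
while for the Bencs--Csikv\'ari radius, using $\tan(x)/x\to 1$ at $0$,
\[
d\cdot\tfrac{7}{8}\tan\!\left(\tfrac{\pi}{2d}\right) \;\longrightarrow\; \tfrac{7}{8}\cdot\tfrac{\pi}{2} \;=\; \tfrac{7\pi}{16}.
\]
Consequently, for any $r<e^{-1}$ the closed disk $\overline{D_r(0)}$ lies in $d\cdot D_{d^d/(d+1)^{d+1}}(0)\subseteq d\cdot\mathcal{U}_d$ for all sufficiently large $d$; the analogous containment holds for every compact subset of $D_{7\pi/16}(0)\cap\{\Re>0\}$.

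Next I would pass to the limit. Fix $\lambda_0 \in D_{e^{-1}}(0)$. If $\lambda_0$ were not in $\overline{\mathcal{U}_\infty}$ then, since $\mathcal{U}_\infty$ is a domain, one could pick $\varepsilon > 0$ with $\overline{B_\varepsilon(\lambda_0)} \cap \overline{\mathcal{U}_\infty} = \emptyset$ and apply the upper half of Theorem~\ref{thm: theorem A} with $K_2 = \mathbb{C}\setminus\overline{B_{\varepsilon/2}(\lambda_0)}$, contradicting $\lambda_0 \in d\cdot\mathcal{U}_d$ from the previous step. Hence $D_{e^{-1}}(0) \subseteq \overline{\mathcal{U}_\infty}$, and the semi-disk case is identical.

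The real obstacle is upgrading containment in $\overline{\mathcal{U}_\infty}$ to containment in the open domain $\mathcal{U}_\infty$ itself, i.e.\ showing that $\partial\mathcal{U}_\infty$ does not enter the open disk $D_{e^{-1}}(0)$ or the open semi-disk. Here I would exploit the stronger neighborhood-uniform inclusion from step one: an entire closed disk $\overline{D_r(0)}$ about $\lambda_0$ is eventually in $d\cdot\mathcal{U}_d$, which forces every sublimit of the closed complements $\mathbb{C}\setminus(d\cdot\mathcal{U}_d)$ to avoid the open disk $D_r(0)$. Combined with the characterisation of $\mathcal{U}_\infty$ in Definition~\ref{def: Uinfty} as the open maximal boundedness component, star-convexity from $0$ (Theorem~\ref{thm: theorem B}, part~\ref{item: item star convex}), and $0\in\mathcal{U}_\infty$ (since $Z_G(0)=1$ for every graph $G$), this places $\lambda_0$ in $\mathcal{U}_\infty$ and finishes the proof for both regions.
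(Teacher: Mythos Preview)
Your steps 1--3 are correct and match the paper's intended one-line deduction from Theorem~\ref{thm: theorem A}: the rescaled Shearer and Bencs--Csikv\'ari radii converge as you computed, so every compact subset of $D_{e^{-1}}(0)$ (respectively of the semi-disk) lies in $d\cdot\mathcal{U}_d$ for all large $d$, and the Hausdorff convergence then forces these open sets into $\overline{\mathcal{U}_\infty}$.

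The problem is your step 4. You correctly spot that Hausdorff convergence only yields containment in $\overline{\mathcal{U}_\infty}$, not in $\mathcal{U}_\infty$, but the resolution you offer is not a proof. Knowing that the complements $\mathbb{C}\setminus(d\cdot\mathcal{U}_d)$ eventually avoid a neighbourhood of $\lambda_0$ is just a restatement of step 2; it does not by itself say anything about $\mathcal{U}_\infty$. Star-convexity of $\mathcal{U}_\infty$ does not help either: to deduce $\lambda_0\in\mathcal{U}_\infty$ from star-convexity you would need $t\lambda_0\in\mathcal{U}_\infty$ for some $t>1$, which is exactly the kind of statement you are trying to prove. And Definition~\ref{def: Uinfty} alone gives no regularity of $\partial\mathcal{U}_\infty$. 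As written, nothing in your paragraph excludes the possibility that $\mathcal{U}_\infty$ has a slit or isolated boundary point inside the disk.

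The missing ingredient is Lemma~\ref{lemma:closed}: $\mathcal{V}_\infty$ is closed. Then $\overline{\mathcal{U}_\infty}\subseteq\mathcal{V}_\infty$, and since $D_{e^{-1}}(0)$ and the semi-disk are open, their containment in $\overline{\mathcal{U}_\infty}\subseteq\mathcal{V}_\infty$ immediately gives containment in $\Int(\mathcal{V}_\infty)=\mathcal{U}_\infty$. That single observation replaces your entire step 4. (For the Shearer disk there is also a direct route that bypasses Theorem~\ref{thm: theorem A} altogether: the proof of Lemma~\ref{lem: star convex} shows $\mathbb{D}$ is forward invariant for $E_\Lambda$ whenever $|\Lambda|<1/e$, whence $D_{1/e}(0)\subseteq\mathcal{U}_\infty$.)
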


Since $\mathcal{U}_\infty$ is contained in $\mathcal{C}_\infty$, the radius of the Shearer disk $D_{e^{-1}}(0)$ is sharp.

\begin{thm}[\cite{PR19}]
	For any $d \ge 2$ the set $\mathcal{U}_d$ contains an open neighborhood
	of the real interval $[0,\frac{d^d}{(d-1)^{d+1}})$.
\end{thm}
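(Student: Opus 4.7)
The plan is to control the tree recursion for occupation ratios uniformly over a complex neighborhood of each point $\lambda_0 \in [0, \lambda_c)$, where $\lambda_c = d^d/(d-1)^{d+1}$. For any $G \in \mathcal{G}_d$ and vertex $v$, the standard identity $Z_G(\lambda) = Z_{G-v}(\lambda) + \lambda Z_{G - N[v]}(\lambda) = (1 + R_{G,v}(\lambda))\, Z_{G-v}(\lambda)$, together with Weitz's self-avoiding walk tree construction, reduces the non-vanishing of $Z_G$ to the statement that, along the tree recursion $R_{k+1} = \lambda / \prod_{i=1}^m (1 + R_k^{(i)})$ with branching $m \le d$ and leaf values $R_0 = 0$, the value $-1$ is never attained. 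It therefore suffices to exhibit, for each $\lambda_0 \in [0, \lambda_c)$, an open neighborhood $U \ni \lambda_0$ and a topological disk $D \subset \mathbb{C} \setminus \{-1\}$ containing $0$ such that $F_\lambda(r_1,\ldots,r_m) := \lambda / \prod_i (1+r_i) \in D$ whenever $r_1, \ldots, r_m \in D$, $m \le d$, and $\lambda \in U$.

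First I would analyse the real single-variable recursion $f_\lambda(r) = \lambda/(1+r)^d$. For $\lambda_0 \in (0, \lambda_c)$ the equation $r(1+r)^d = \lambda_0$ has a unique positive root $r^*(\lambda_0)$, and a direct computation yields $|f_{\lambda_0}'(r^*)| = d r^*/(1+r^*) < 1$, with equality precisely at $\lambda_c$. Hence $f_{\lambda_0}$ is a strict contraction at $r^*$, so one can construct a closed round disk $D(\lambda_0) \ni r^*$, avoiding $-1$, with $f_{\lambda_0}(D(\lambda_0))$ compactly contained in $D(\lambda_0)$. Iteration of $f_{\lambda_0}$ from $0$ eventually lands in $D(\lambda_0)$, so a slight enlargement $\tilde D(\lambda_0)$ that additionally contains $0$ remains forward-invariant.

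The second step is the complex perturbation. Both $r^*(\lambda)$ and $f_\lambda'(r^*(\lambda))$ depend analytically on $\lambda$, so for $\lambda$ in a sufficiently small complex neighborhood $U$ of $\lambda_0$ the inclusion $f_\lambda(\tilde D(\lambda_0)) \subset \tilde D(\lambda_0)$ persists by continuity. A uniform estimate, based on the fact that at the diagonal $r_1 = \cdots = r_m = r$ the map $F_\lambda$ reduces to $\lambda/(1+r)^m$ together with a bound on the spread away from the diagonal, then yields $F_\lambda(r_1,\ldots,r_m) \in \tilde D(\lambda_0)$ for every arity $m \le d$ and every choice of inputs in $\tilde D(\lambda_0)$. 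Covering the interval $[0, \lambda_c)$ by such neighborhoods $U$ and taking the union produces the desired open neighborhood, on which $1 + R_{G,v}(\lambda) \neq 0$ for every $G \in \mathcal{G}_d$ and every vertex $v$; non-vanishing of $Z_G$ then follows by induction on the number of vertices from the base case $Z = 1$.

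The main obstacle is passing from the single-variable real contraction at a fixed $\lambda_0$ to a multivariable invariant region that survives complex perturbation and accommodates every arity $m \le d$ simultaneously. As $\lambda_0 \to \lambda_c^-$ the contraction factor $|f_{\lambda_0}'(r^*)|$ tends to $1$ and both the admissible disk $\tilde D(\lambda_0)$ and the neighborhood $U$ shrink to zero, reflecting the fact that the interval in the statement is half-open. Making these estimates quantitatively consistent, and in particular allowing the inputs $r_i$ to vary independently inside $\tilde D(\lambda_0)$, is the technical heart of the argument, and is carried out in \cite{PR19} by means of an explicit family of nested disks whose radii decrease monotonically under the recursion.
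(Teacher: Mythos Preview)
This theorem is not proved in the present paper; it is quoted from \cite{PR19}. The paper does, however, reproduce the \cite{PR19} strategy in the limiting setting when proving Lemma~\ref{lemma:positive strict invariant}, so that is the natural point of comparison.

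Your high-level reduction agrees with the paper's Lemma~\ref{lemma:invariance}: it suffices to exhibit, for each $\lambda_0\in[0,\lambda_c)$, a region $Y\ni 0$ in $\mathbb C\setminus\{-1\}$ that is forward invariant under all the maps $F_\lambda$. Note incidentally that once $0\in Y$, invariance for arity $d$ already gives invariance for every arity $m\le d$, simply by padding the remaining $d-m$ inputs with zeros; your ``bound on the spread away from the diagonal'' is therefore unnecessary.

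The genuine gap is in your construction of $Y$. You take a small disk $D(\lambda_0)$ around the attracting fixed point $r^*$ and then assert that ``a slight enlargement $\tilde D(\lambda_0)$ that additionally contains $0$ remains forward-invariant''. For $\lambda_0$ away from $0$ the fixed point $r^*$ is not close to $0$, so this enlargement is not slight, and there is no reason it stays invariant: at $r=0$ one has $|f_{\lambda_0}'(0)|=d\lambda_0$, which exceeds $1$ as soon as $\lambda_0>1/d$, so a round disk, or a uniform tube around $[0,\lambda_0]$ in the original coordinates, is expanded near $0$. The fact that the orbit of $0$ eventually enters $D(\lambda_0)$ does not by itself produce an invariant enlargement.

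The mechanism in \cite{PR19}, as summarised in the paper just before Lemma~\ref{lemma:positive strict invariant}, is different from what you describe. One first applies the change of coordinates $\phi(x)=\log(1+x)$; the conjugated one-variable map $\phi\circ f_{\lambda_0}\circ\phi^{-1}$ is then a strict contraction on the \emph{entire} invariant real interval, not merely at the fixed point. A thin tubular $\epsilon$-neighbourhood of that interval in the $\phi$-coordinates is consequently mapped strictly into itself, and its pullback under $\phi^{-1}$ is the desired region: convex, containing $0$, avoiding $-1$, and strictly forward invariant, hence stable under complex perturbation of $\lambda$. Your closing description of \cite{PR19} as using ``an explicit family of nested disks whose radii decrease monotonically'' does not match this account; the idea you are missing is the global contraction obtained from the logarithmic change of variable.
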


As this statement does not give bounds on the size of the neighborhood of the real interval that
are uniform after rescaling, we cannot deduce directly from Theorem \ref{thm: theorem A} that $\mathcal{U}_\infty$
contains an open neighborhood of the real interval $[0,e)$. However, we can prove this result directly:

\begin{thm}
	The set $\mathcal{U}_\infty$ contains an open neighborhood of the real interval $[0,e)$.
\end{thm}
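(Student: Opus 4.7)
The plan is to work directly with the description of $\mathcal{U}_\infty$ from Definition~\ref{def: Uinfty} as the set of parameters for which the limit semi-group of exponential maps admits a uniformly bounded common forward-invariant region, and to construct such a region explicitly for every $\Lambda$ in a complex neighborhood of the interval $[0,e)$.

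First I would analyze the real dynamics underlying the semi-group. The rescaling $\Lambda = d\lambda$, $S = dR$ converts the regular $d$-ary tree recursion $R \mapsto \lambda/(1+R)^d$ into the exponential map
\[
F_\Lambda(S) = \Lambda\, e^{-S}.
\]
Its unique positive real fixed point $S_0 = S_0(\Lambda)$ is defined implicitly by $S_0 = \Lambda e^{-S_0}$, with multiplier $F_\Lambda'(S_0) = -S_0$, and satisfies $S_0(0) = 0$ and $S_0(e) = 1$. Hence $|F_\Lambda'(S_0)| < 1$ precisely for $\Lambda \in [0,e)$, which identifies the interval in the statement as the exact range of real parameters for which the base dynamics is attracting.

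Second, for each $\delta > 0$ and each $\Lambda_0 \in [0, e-\delta]$, I would exhibit a closed disk $\overline{D}(S_0(\Lambda_0),\rho)$ that every generator of the semi-group at $\Lambda_0$ maps strictly into its interior. Because all generators share $S_0$ as a fixed point and their multipliers at $S_0$ are controlled by $|S_0| < 1$, a Schwarz-lemma type estimate combined with second-order bounds on the exponential yields such a disk for $\rho$ small enough (depending on $\delta$ but uniform in $\Lambda_0$). Continuity of the generators in $\Lambda$ then implies that the same disk remains forward-invariant for every $\Lambda$ in a small complex neighborhood $N_{\Lambda_0}$ of $\Lambda_0$, so by Definition~\ref{def: Uinfty} one has $N_{\Lambda_0} \subseteq \mathcal{U}_\infty$. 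A finite sub-cover of the compact sub-interval $[0, e-\delta]$ by such neighborhoods produces an open subset of $\mathcal{U}_\infty$ containing it, and taking the union over $\delta \to 0^+$ yields the desired open neighborhood of $[0,e)$.

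The main obstacle will be the per-parameter construction of an invariant disk. The semi-group is generated by infinitely many exponential maps, indexed by the combinatorial configurations that can occur as sub-tree ratio transformations, so the chosen disk must be simultaneously forward-invariant under every generator. What makes this feasible is that all generators share $S_0(\Lambda_0)$ as their attracting fixed point, with multipliers uniformly bounded in modulus by $|S_0| < 1$ on compact sub-intervals of $[0,e)$. Near the endpoint $\Lambda = e$ this control degenerates since the multiplier approaches $-1$, and one loses uniform hyperbolicity; restricting to sub-intervals $[0, e-\delta]$ and letting both the neighborhoods $N_{\Lambda_0}$ and the radii $\rho$ shrink as $\delta \to 0^+$ is what ultimately allows the argument to go through.
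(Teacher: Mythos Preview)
There is a genuine gap in your plan. The set $V_\Lambda$ is by definition the orbit of $0$ under the semi-group $G_\Lambda$, so any convex forward-invariant region used to bound it (via Lemma~\ref{lem: basic properties}) must contain $0$. In particular it must contain the whole segment $[0,\Lambda]$, since $E_\Lambda(0)=\Lambda\in V_\Lambda$. A small disk $\overline D(S_0(\Lambda_0),\rho)$ around the attracting fixed point does not contain $0$ once $\Lambda_0$ is bounded away from $0$, so it cannot witness $\Lambda_0\in\mathcal V_\infty$. Enlarging $\rho$ until the disk contains $0$ defeats the Schwarz-type estimate you propose: $E_\Lambda$ is \emph{not} a contraction on such a large disk (indeed $|E_\Lambda'(0)|=\Lambda$ can be arbitrarily close to $e>1$), so strict invariance fails for $\Lambda$ near $e$.

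A second, related issue is the assertion that ``all generators share $S_0$ as a fixed point''. They do not: for $s\in[0,1)$ the map $Z\mapsto\Lambda e^{-sZ}$ has a different fixed point, and the constant map $Z\mapsto\Lambda$ has none. This is not fatal in itself, because by Lemma~\ref{lem: basic properties} it suffices to find a convex set containing $0$ that is invariant for the single map $E_\Lambda$; but it shows that the linearisation picture you are relying on is not the right one.

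The paper circumvents exactly this difficulty in Lemma~\ref{lemma:positive strict invariant}: conjugating by $\phi(x)=\log(1+x)$ turns $E_\Lambda$ into a map $h_\Lambda$ which is a strict contraction on the \emph{entire} ray $\mathbb R_{\ge 0}$ (not merely near the fixed point), so a thin tubular neighbourhood of the interval $\phi([0,\Lambda])$ is strictly invariant for $h_\Lambda$. Pulling back by $\phi^{-1}$ yields a convex strictly $E_\Lambda$-invariant region containing $0$, and continuity in $\Lambda$ then gives a complex neighbourhood of each real $\Lambda_0\in[0,e)$ inside $\mathcal U_\infty$. Your compactness/covering argument in the final paragraph is fine once this per-parameter construction is fixed.
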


In fact, parts~{(\ref{item: item star convex})} and {(\ref{item: gamma curve})} of Theorem~\ref{thm: theorem B} together imply that $\mathcal{U}_\infty$ contains a neighborhood of $[0,e)$.

\begin{subsection}{Organization}
The organization of this paper is the following. In section~\ref{sec: preliminaries} we will cover background material and discuss known results regarding the sets $\mathcal{U}_d$; we moreover define the set $\mathcal{V}_\infty$ and its interior $\mathcal{U}_\infty$. In section~\ref{sec: properties} we prove properties of these sets. Our main results rely on the analysis done in this section.
Section~\ref{sec: The main proof} is dedicated to the proof of Theorem~\ref{thm: theorem A}, and section~\ref{sec: gamma curve} to the proof of part~{(\ref{item: gamma curve})} of Theorem~\ref{thm: theorem B}.

Specifically: parts~{(\ref{item: item star convex})}, {(\ref{item: item (a) theorem B})} and {(\ref{item: gamma curve})} of Theorem~\ref{thm: theorem B} are proved in Lemma~\ref{lem: star convex}, Corollary~\ref{cor:thm B} and Corollary~\ref{cor: curve Gamma}. Theorem~\ref{thm: theorem A} is proved in section~\ref{sec: the proof}.
\end{subsection}

\section{Preliminaries}
\label{sec: preliminaries}
\subsection{From finite graphs to rational dynamics}
 
In this section we will recall how the set of zeros of bounded degree 
graphs can be related to a semi-group generated by rational maps. Most results in this section will be stated without proofs. The proofs can be found in \cite{BBGPR} or the references therein.

\begin{definition}
	Given a rooted graph $(G,v)$, write
	\[
		Z^{in}_{G,v}(\lambda) = \sum_{I:\, v\in I} \lambda^{|I|}
		\quad
		\text{ and }
		\quad
		Z^{out}_{G,v}(\lambda) = \sum_{I:\, v\notin I} \lambda^{|I|},
	\]
	where both summations run over the independent vertex subsets $I \subset V(G)$.
	We moreover define the \emph{occupation ration} of $G$ at $v$ as the rational function
	\[
		R_{G,v}(\lambda) = \frac{Z^{in}_{G,v}(\lambda)}{Z^{out}_{G,v}(\lambda)}.
	\]
\end{definition}

Since $Z_G(\lambda) = Z_{G,v}^{in}(\lambda) + Z_{G,v}^{out}(\lambda)$, an immediate consequence is that $Z_G(\lambda) = -1$ if and only either $R_{G,v}(\lambda) = 0$ or both $Z_{G,v}^{in}$ and $Z_{G,v}^{out}$ vanish at $\lambda$. While it is in fact possible that both $Z_{G,v}^{in}$ and $Z_{G,v}^{out}$ vanish simultaneously, we can get rid of this condition by considering the whole family of graphs $G \in \mathcal{G}_d$. To make this explicit we introduce some notation. 

Let $\mathcal{G}_d^{k}$ denote the set of rooted graphs $(G,v)$ such that the maximum degree of $G$ is at most $d+1$ and the degree of the vertex $v$ is at most $k$. We let
\[
    \mathcal{R}_d^k = \{R_{G,v} : (G,v) \in \mathcal{G}_d^k \}
\]
denote the corresponding family of occupation ratios. For any $\lambda$ in the Riemann sphere $\widehat{\mathbb{C}}$ we denote by $\mathcal{R}_d^k(\lambda)$ the set of all rational functions in $\mathcal{R}_d^k$ evaluated at $\lambda$. Thus $\mathcal{R}_d^k$ is a family of rational functions, and its values $\mathcal{R}_d^k(\lambda)$ are contained in $\widehat{\mathbb{C}}$. Recall that $\mathcal{Z}_d$ denotes the set of all zeros for graphs of degree at most $d+1$.

\begin{lemma}[Lemma 2.3.~in \cite{BBGPR}]
    \label{lem: -1}
    Let $d\geq 2$, then $\lambda \in \mathcal{Z}_d$ if and only if 
    $-1 \in \mathcal{R}_d^d(\lambda)$.
\end{lemma}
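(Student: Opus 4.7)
The plan is to establish the two directions separately.

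The backward direction is essentially a direct computation. If $(G,v)\in \mathcal{G}_d^d$ and the rational function $R_{G,v}=Z^{in}_{G,v}/Z^{out}_{G,v}$ evaluates to $-1$ at $\lambda$, then, after reducing to lowest terms (which matters in the edge case that both numerator and denominator vanish at $\lambda$), one has $Z^{in}_{G,v}(\lambda)+Z^{out}_{G,v}(\lambda)=0$. Since $Z_G=Z^{in}_{G,v}+Z^{out}_{G,v}$ as polynomials, this gives $Z_G(\lambda)=0$, so $\lambda\in\mathcal{Z}_d$.

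For the forward direction, my plan is to pick a graph $G\in \mathcal{G}_d$ with $Z_G(\lambda)=0$ having the smallest possible number of vertices. Minimality forces $Z_{G-v}(\lambda)=Z^{out}_{G,v}(\lambda)\neq 0$ for every $v\in V(G)$, and combining this with $Z_G(\lambda)=0$ yields $R_{G,v}(\lambda)=-1$ for \emph{every} choice of root $v$. If $G$ admits any vertex of degree at most $d$, this already produces the required pair $(G,v)\in\mathcal{G}_d^d$.

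The main obstacle is the case where $G$ is $(d+1)$-regular, and no vertex of $G$ has degree small enough. My approach here is to modify $G$ into an auxiliary rooted graph of the form $G':=(G\setminus ab)\cup\{w,aw\}$ for some edge $ab\in E(G)$, attaching a fresh pendant vertex $w$ to $a$; then $(G',w)\in\mathcal{G}_d^d$ automatically since $\deg_{G'}(w)=1$. Using the root expansions at $w$, namely $Z^{in}_{G',w}=\lambda Z_{G-a}$ and $Z^{out}_{G',w}=Z_{G\setminus ab}$, together with the edge-deletion identity $Z_{G\setminus ab}=Z_G+\lambda^2 Z_{G-N[a]-N[b]}$, the requirement $Z_{G'}(\lambda)=0$ reduces to an algebraic identity that after some manipulation becomes $Z_{G-N[a]}(\lambda)=Z_{G-N[a]-N[b]}(\lambda)$. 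This is automatic for $G=K_{d+2}$ (since both sides equal $1$) but may fail in general, so the hard part is either finding an edge $ab$ for which the identity holds in an arbitrary minimal $(d+1)$-regular example, or designing a more elaborate gadget that does not depend on the combinatorial structure of $G$. A potential alternative route I would pursue is to pass through Weitz's self-avoiding walk tree and exploit the recursion $R_{T,v}(\lambda)=\lambda\prod_{i}1/(1+R_i(\lambda))$ on trees, where the root-degree constraint can be engineered by a careful combinatorial choice of subtree ratios $R_i\in\mathcal{R}_d^d(\lambda)$.
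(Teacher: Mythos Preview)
The paper does not give its own proof of this lemma; it simply cites Lemma~2.3 of \cite{BBGPR}. So there is no in-paper argument to compare against, and I will just assess your proposal on its merits.

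Your backward direction is fine. For the forward direction, the minimality argument correctly yields $R_{G,v}(\lambda)=-1$ for every vertex $v$ of a vertex-minimal $G$, and this finishes the proof whenever $G$ has a vertex of degree $\le d$. The genuine gap is the $(d+1)$-regular case. Your pendant-vertex gadget reduces the question to the identity $Z_{G-N[a]}(\lambda)=Z_{G-N[a]-N[b]}(\lambda)$, and as you yourself note this need not hold for an arbitrary minimal $(d+1)$-regular $G$; you do not supply a way to choose $ab$ so that it does, nor an alternative gadget that avoids the issue. So as written the proof is incomplete.

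The second alternative you mention---passing through the tree representation---does close the gap, and quite cheaply. Once you have $R_{G,v}(\lambda)=-1$ with $Z^{out}_{G,v}(\lambda)\neq 0$, the tree result quoted just above the lemma in the paper gives a rooted tree $(T,u)\in\mathcal G_d^{d+1}$ with $R_{T,u}=R_{G,v}$ as rational functions. Your own observation from the backward direction (that $R=-1$ forces the numerator plus denominator to vanish, even in the $0/0$ case) then gives $Z_T(\lambda)=0$. Now take a vertex-minimal tree $T^*\in\mathcal G_d$ with $Z_{T^*}(\lambda)=0$ and root it at a leaf $\ell$ (or, if $T^*$ is a single vertex, note $\lambda=-1$ and the single-vertex ratio is $\lambda=-1$). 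Minimality gives $Z_{T^*-\ell}(\lambda)\neq 0$, hence $R_{T^*,\ell}(\lambda)=-1$, and $(T^*,\ell)\in\mathcal G_d^1\subseteq\mathcal G_d^d$. This replaces the ad~hoc edge-deletion gadget entirely and requires no case analysis on the structure of $G$.
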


%\begin{lemma}[Lemma 2.3.~in \cite{BBGPR}]
%	Let $d \ge 2, G \in \mathcal{G}_d$ and $\lambda \in \mathbb{C}$ such that $Z_G(\lambda) = 0$.
%	There exists a tree $T \in \mathcal{G}_d$ with a vertex
%	$v$ whose degree is at most $d$ for which $R_{G,v}(\lambda)=-1$.
%\end{lemma}

%In fact $v$ can be taken to be a leaf of $T$ but that is not necessary for us. 

%\begin{corollary}\label{cor:-1}
%	Let $d \ge 2$. The set $\mathcal{Z}_d$ is given by the set of all $\lambda$
%	for which there exists a rooted tree $(T,v)$ with $T \in \mathcal{G}_d$ and $\deg(v)\leq d$ for which
%	$R_{T,v}(\lambda) = -1$.
%\end{corollary}

It was shown in \cite{Scott_2005,Weitz_2006,Bencs18} 
%It was shown by the first author~\cite{Bencs18} 
that given a rooted graph $(G,v) \in \mathcal{G}_d^k$ there is a rooted tree $(T,u) \in \mathcal{G}_d^k$ such that $R_{G,v} = R_{T,u}$. This is especially useful because for a rooted tree $(T,u)$ the ratio can be expressed recursively. Given $k \ge 1$ we define
$$
F_\lambda(z_1, \ldots, z_k) = \frac{\lambda}{ \prod_{j=1}^k (1 + z_j)}.
$$
Note that the map $F_\lambda$ is defined for any number of inputs.

%i.e.\ it acts on any domain $\mathbb C^k$ \pjotr{We have to be a little bit careful with $z_j = -1$ or do we want to sweep this under the rug?}.

\begin{definition}
	Let $d \ge 2$ and $\lambda \in \mathbb{C}$. We recursively define a class of rational functions $\mathcal{F}_{d,\lambda}$ as follows:
	\begin{enumerate}
		\item
		the identity map $z \mapsto z$ is contained in $\mathcal{F}_{d,\lambda}$;
		\item
		given any $1 \le k \le d$ and functions $f_1, \ldots , f_k \in \mathcal{F}_{d,\lambda}$, the rational function
		\[
			z \mapsto F_\lambda(f_1(z), \ldots , f_k(z))
		\]
		is also contained in $\mathcal{F}_{d,\lambda}$.
	\end{enumerate}
\end{definition}

The following lemma follows for example from Lemma~2.4.~in \cite{BBGPR} and gives the relation between the rational semi-group $\mathcal{F}_{d,\lambda}$ parameterized by $\lambda$ and the set of occupation ratios.

\begin{lemma}
    \label{lem: ratiofunctions}
    Let $d\geq 2$ and $f_\lambda \in \mathcal{F}_{d,\lambda}$ not equal to the identity. Write
    \[
        f_{\lambda}(z) = F_{\lambda}(f_{1}(z), \dots, f_{k}(z)).
    \]
    The rational map $\lambda \mapsto f_{\lambda}(0)$ is an element of $\mathcal{R}_d^k$, moreover the set of rational functions 
    \[
        \{\lambda \mapsto f_\lambda(0): f_{\lambda} \in 
        \mathcal{F}_{d,\lambda}
        \}
    \]
    is equal to $\mathcal{R}_d^d$.
\end{lemma}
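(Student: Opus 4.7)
The strategy is to match the recursive construction of $\mathcal{F}_{d,\lambda}$ with the recursive structure of occupation ratios on rooted trees, and then to transfer between trees and general graphs via the Weitz/Scott/Bencs reduction already invoked in the paper.

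First I would record the key recursive identity on trees. Let $(T,u)$ be a rooted tree in which the root $u$ has children $u_1,\dots,u_k$ with corresponding subtrees $T_1,\dots,T_k$. Splitting independent sets according to whether $u$ is included yields
\[
Z^{in}_{T,u}(\lambda) = \lambda \prod_{j=1}^{k} Z^{out}_{T_j,u_j}(\lambda), \qquad
Z^{out}_{T,u}(\lambda) = \prod_{j=1}^{k}\bigl(Z^{in}_{T_j,u_j}(\lambda)+Z^{out}_{T_j,u_j}(\lambda)\bigr),
\]
whence
\[
R_{T,u}(\lambda) = \frac{\lambda}{\prod_{j=1}^{k}(1+R_{T_j,u_j}(\lambda))} = F_\lambda\bigl(R_{T_1,u_1}(\lambda),\dots,R_{T_k,u_k}(\lambda)\bigr),
\]
which is precisely the shape of the recursion defining $\mathcal{F}_{d,\lambda}$.

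For the inclusion $\mathcal{R}_d^d \subseteq \{\lambda\mapsto f_\lambda(0) : f_\lambda\in\mathcal{F}_{d,\lambda}\}$, given $(G,v)\in\mathcal{G}_d^d$ the Weitz/Scott/Bencs reduction cited immediately above the lemma supplies a rooted tree $(T,u)\in\mathcal{G}_d^d$ with $R_{G,v}=R_{T,u}$, so it suffices to realize all tree ratios. I then induct on $|V(T)|$: the single-vertex base case is handled by $f_\lambda(z)=F_\lambda(z)$, giving $f_\lambda(0)=\lambda=R_{T,u}(\lambda)$ for $T=\{u\}$; if $u$ has children $u_1,\dots,u_k$ with $k\le d$, the induction hypothesis produces $g_j\in\mathcal{F}_{d,\lambda}$ with $g_j(0)=R_{T_j,u_j}(\lambda)$, and setting $f_\lambda(z):=F_\lambda(g_1(z),\dots,g_k(z))$ gives an element of $\mathcal{F}_{d,\lambda}$ whose value at $0$ equals $R_{T,u}(\lambda)$ by the recursion.

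For the reverse inclusion together with the arity-refined first assertion, I induct on the construction of $f_\lambda\in\mathcal{F}_{d,\lambda}$. Write $f_\lambda=F_\lambda(f_1,\dots,f_k)$ and let $S\subseteq\{1,\dots,k\}$ index those $f_j$ that are not the identity. For $j\notin S$ the corresponding factor in the denominator of $f_\lambda(0)$ is $(1+0)=1$, so these slots can be ignored; for $j\in S$ the induction hypothesis provides a rooted tree $(T_j,u_j)\in\mathcal{G}_d^d$ with $R_{T_j,u_j}(\lambda)=f_j(0)$. I attach a new root $u$ by edges to each $u_j$ with $j\in S$; then $u$ has degree $|S|\le k$, each $u_j$ has degree at most $d+1$ in the new tree $T$, so $(T,u)\in\mathcal{G}_d^k$, and the recursion yields $R_{T,u}(\lambda)=f_\lambda(0)$, so $\lambda\mapsto f_\lambda(0)\in\mathcal{R}_d^k$. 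The main obstacle is purely a bookkeeping issue around the identity element: since no simple rooted graph has occupation ratio identically $0$, the identity in $\mathcal{F}_{d,\lambda}$ must be interpreted as an ``absent child'' slot when building the corresponding tree, and symmetrically a single-vertex tree must be reached by applying $F_\lambda$ to a single identity input. Once this edge case is handled cleanly, the two recursions match term by term and the lemma reduces to the graph-to-tree correspondence.
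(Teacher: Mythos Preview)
The paper does not give its own proof of this lemma; it simply cites Lemma~2.4 of \cite{BBGPR}. Your argument is correct and is exactly the standard one underlying such references: you match the recursive definition of $\mathcal{F}_{d,\lambda}$ with the recursion
\[
R_{T,u}(\lambda)=F_\lambda\bigl(R_{T_1,u_1}(\lambda),\dots,R_{T_k,u_k}(\lambda)\bigr)
\]
for rooted trees, and you invoke the Weitz/Scott--Sokal/Bencs tree reduction (already cited in the paper) to pass from arbitrary $(G,v)\in\mathcal{G}_d^d$ to trees. Your bookkeeping around the identity element---treating identity inputs as ``absent children'' so that the resulting root degree is $|S|\le k$, and conversely realizing the single-vertex tree via $F_\lambda$ applied to one identity input---is the right way to resolve the only delicate point. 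In short, your proof is the expected one and agrees with what the paper outsources to the cited reference.
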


\begin{definition}
	Let $d \ge 2$ and fix $\lambda \in \mathbb C$. We say that a set $Y \subset \mathbb C \setminus \{-1\}$ is invariant under the
	maps $F_\lambda$ if for every $1\le k \le d$ and every $z_1, \ldots, z_k \in Y$ we have
	$$
		F_\lambda(z_1 , \ldots , z_k) \in Y.
	$$
\end{definition}

A direct consequence of lemmas~\ref{lem: -1}~and~\ref{lem: ratiofunctions} is the following:

\begin{lemma}\label{lemma:invariance}
	Let $d \ge 2$ and $\lambda \in \mathbb C$.
	Suppose there exists a set $Y \subset \mathbb C \setminus \{-1\}$ containing $0$ which is invariant under the maps $F_\lambda$. Then $Z_G(\lambda) \neq 0$ for all graphs $G \in \mathcal{G}_d$.
\end{lemma}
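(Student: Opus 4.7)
The plan is to combine the two recent lemmas (Lemma~\ref{lem: -1} and Lemma~\ref{lem: ratiofunctions}) with a straightforward induction on the recursive structure defining $\mathcal{F}_{d,\lambda}$. By Lemma~\ref{lem: -1}, it suffices to show that $-1 \notin \mathcal{R}_d^d(\lambda)$, and by Lemma~\ref{lem: ratiofunctions} the set $\mathcal{R}_d^d(\lambda)$ coincides (up to the identity at $0$, which contributes the value $0 \in Y$) with the orbit of $0$ under the semi-group $\mathcal{F}_{d,\lambda}$ evaluated at $\lambda$. Hence the entire argument reduces to showing that this orbit lies in $Y$, from which the hypothesis $-1 \notin Y$ immediately yields the conclusion.

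The inductive step is the heart of the matter. I would prove by induction on the depth of the recursive construction of $f_\lambda \in \mathcal{F}_{d,\lambda}$ that $f_\lambda(0) \in Y$. The base case is the identity map, for which $f_\lambda(0) = 0 \in Y$ by assumption. For the inductive step, if $f_\lambda(z) = F_\lambda(f_1(z),\dots,f_k(z))$ with each $f_j \in \mathcal{F}_{d,\lambda}$ of smaller depth and $1 \le k \le d$, then by the inductive hypothesis each $f_j(0) \in Y$, and by the $F_\lambda$-invariance of $Y$ we conclude
\[
    f_\lambda(0) = F_\lambda\bigl(f_1(0), \dots, f_k(0)\bigr) \in Y.
\]
(Note that the expression $F_\lambda(f_1(0),\dots,f_k(0))$ is well-defined because $f_j(0) \in Y \subset \mathbb{C}\setminus\{-1\}$, so no factor $1+f_j(0)$ vanishes in the denominator of $F_\lambda$.)

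Combining these two pieces, every element of $\mathcal{R}_d^d(\lambda)$ belongs to $Y$; since $-1 \notin Y$, Lemma~\ref{lem: -1} forces $\lambda \notin \mathcal{Z}_d$, i.e.\ $Z_G(\lambda) \neq 0$ for every $G \in \mathcal{G}_d$. I do not anticipate any real obstacle: the induction is completely mechanical once one has the semigroup description of $\mathcal{R}_d^d$ from Lemma~\ref{lem: ratiofunctions} and the characterization of zero parameters from Lemma~\ref{lem: -1}. The only minor subtlety to verify is that one may indeed iterate $F_\lambda$ without running into a forbidden denominator, which is why $Y$ is required to avoid the point $-1$.
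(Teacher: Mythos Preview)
Your proposal is correct and follows exactly the approach indicated in the paper, which simply states that the lemma is a direct consequence of Lemmas~\ref{lem: -1} and~\ref{lem: ratiofunctions}. Your induction on the recursive depth of $f_\lambda \in \mathcal{F}_{d,\lambda}$ is the natural way to unpack that ``direct consequence,'' and the observation that $Y \subset \mathbb{C}\setminus\{-1\}$ guarantees well-definedness of each $F_\lambda$-application is the only subtlety worth noting.
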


An important technique used in many recent papers in the area is the construction of new graphs by \emph{implementing} one rooted graph into another graph, i.e. by replacing each vertex of the latter graph by a copy of the former, where edges now connect corresponding copies the root vertices. This technique adds the maximal degrees of the latter graph to the degree of the root vertex of the former,  hence the sum of these two numbers should be at most $d+1$ to remain in the class $\mathcal{G}_d$. 

\begin{lemma}
    \label{lem: implementation}
    Let $d,k,m \in \mathbb{Z}_{\geq 1}$ such that $k+m\le d$. If there is a rooted graph
    $(G,v) \in \mathcal{G}_{d}^k$ and a $\lambda_0 \in \widehat{\mathbb{C}}$ such that $R_{G,v}(\lambda_0) \in \overline{\mathcal{Z}_m}$, then $\lambda_0 \in \overline{\mathcal{Z}_d}$.  
\end{lemma}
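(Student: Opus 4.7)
The natural approach is the standard \emph{implementation} (a.k.a.\ substitution) construction. Given any graph $H \in \mathcal{G}_m$, I would form a new graph $G \ast H$ by replacing every vertex $u \in V(H)$ with a fresh copy $G_u$ of $G$, calling the copy of the root $v_u$, and, for each edge $uu' \in E(H)$, adding a new edge from $v_u$ to $v_{u'}$. Every vertex in a copy $G_u$ distinct from $v_u$ retains its degree from $G$, hence at most $d+1$, while the root $v_u$ has degree $\deg_G(v) + \deg_H(u) \le k + (m+1) \le d+1$ by the assumption $k+m \le d$. Thus $G \ast H \in \mathcal{G}_d$.

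Next I would establish the standard factorization identity. Partitioning the independent sets $I \subseteq V(G\ast H)$ according to the subset $S := \{u \in V(H) : v_u \in I\}$, which is forced to be independent in $H$, a direct computation gives
\[
    Z_{G \ast H}(\lambda) \;=\; Z^{out}_{G,v}(\lambda)^{|V(H)|}\cdot Z_H\!\bigl(R_{G,v}(\lambda)\bigr).
\]
Now choose a sequence $\mu_n \to w_0 := R_{G,v}(\lambda_0)$ with each $\mu_n$ a zero of $Z_{H_n}$ for some $H_n \in \mathcal{G}_m$. The rational function $R_{G,v}$ is non-constant (it vanishes at $\lambda=0$ but is not identically zero), so as a self-map of $\widehat{\mathbb{C}}$ it is open. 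By the open mapping theorem there exist $\lambda_n \to \lambda_0$ solving $R_{G,v}(\lambda_n) = \mu_n$. The factorization identity then forces $Z_{G \ast H_n}(\lambda_n) = 0$, so each $\lambda_n$ lies in $\mathcal{Z}_d$, and taking the limit yields $\lambda_0 \in \overline{\mathcal{Z}_d}$.

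The only subtle points I anticipate are essentially bookkeeping: (i) keeping track of which vertex actually gains degree under the substitution to confirm $G\ast H \in \mathcal{G}_d$; and (ii) handling the edge cases $\lambda_0 = \infty$ or $w_0 = \infty$ in the application of openness. Both are handled uniformly by viewing $R_{G,v}$ as a holomorphic self-map of the Riemann sphere and working in a local chart around the relevant point, so no new ideas are required. The genuine content of the lemma is the factorization identity together with the degree arithmetic $k+m \le d$; once these are in hand, openness of $R_{G,v}$ does the rest.
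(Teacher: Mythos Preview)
Your proposal is correct and follows essentially the same approach as the paper: build the implemented graph, check the degree arithmetic $k+(m+1)\le d+1$, and use openness of $R_{G,v}$ on $\widehat{\mathbb{C}}$ to pull back zeros. The only cosmetic difference is that the paper routes through the ratio composition $R_{\tilde H,\tilde u}=R_{H,u}\circ R_{G,v}$ together with Lemma~\ref{lem: -1}, whereas you work directly with the partition-function factorization $Z_{G\ast H}=(Z^{out}_{G,v})^{|V(H)|}Z_H(R_{G,v})$; both are standard and equivalent here.
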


\begin{proof}
    Suppose $(H,u) \in \mathcal{G}_m^m$. We transform the graph $H$ to $\tilde{H}$ by replacing each of its vertices by a copy of the rooted graph $(G,v)$. On the level of ratios this has the effect of composition, that is $R_{\tilde{H},\tilde{u}}(\lambda) = R_{H,u}(R_{G,v}(\lambda))$, where $\tilde{u}$ in $\tilde{H}$ is the vertex corresponding $u$ in the original graph $H$. A proof of this fact can be found for example in Lemma~{2.8} of \cite{BBGPR}. Note that the maximum degree in $\tilde{H}$ is at most $\max{(m+1+k,d+1)} = d+1$ and the degree of $\tilde{u}$ is  at most $m+k \leq d$ and thus $(\tilde{H},\tilde{u}) \in \mathcal{G}_d^d$. This is true for any graph $(H,u) \in \mathcal{G}_m^m$ and therefore we find that $\mathcal{R}_m^m(R_{G,v}(\lambda)) \subseteq \mathcal{R}_d^d(\lambda)$ for every $\lambda \in \widehat{\mathbb{C}}$.
    
    Because $R_{G,v}$ is an open map and $R_{G,v}(\lambda_0) \in \overline{\mathcal{Z}_m}$ we can find a sequence $\{\lambda_n\}_{n \geq 1}$ converging to $\lambda_0$ such that $R_{G,v}(\lambda_n) \in \mathcal{Z}_m$ for all $n$. According to Lemma~\ref{lem: -1} this means that $-1$ is contained in $\mathcal{R}_m^m(R_{G,v}(\lambda_n))$ for all $n$. This thus means that $-1 \in \mathcal{R}_d^d(\lambda_n)$ for all $n$, which again by Lemma~\ref{lem: -1} implies that $\lambda_n \in \mathcal{Z}_d$. We conclude that $\lambda_0 \in \overline{\mathcal{Z}_d}$.
\end{proof}

\subsection{From rational to transcendental dynamics}

In this section we will define the ``rescaled limit'' of objects from the previous subsection. We start by formalizing the relationship between the cardioids $\mathcal{C}_d$ and their limit $\mathcal{C}_\infty$.

\begin{remark}\label{rem:cardioid}
The cardioid $d\cdot \mathcal{C}_d$ contains all parameters $\Lambda$ for which the function $Z\mapsto \Lambda/(1+Z/d)^d$ has an attracting fixed point. As $d \rightarrow \infty$ these rational functions converge locally uniformly to the entire functions $E_\Lambda(Z) = \Lambda \cdot e^{-Z}$, and the domains $d\cdot C_d$ converge in the Hausdorff distance to the domain $\mathcal{C}_\infty$. It follows that the set $\mathcal{C}_\infty$ coincides with the hyperbolic component of the family $E_\Lambda$, containing all parameters $\Lambda$ for which $E_\Lambda$ has an attracting fixed point.

We note that this is a particular case of the more general phenomenon described in~\cite{KK2001}. In our setting the relationship between the hyperbolic components can also be computed explicitly as follows:

Suppose $E_{\Lambda}$ has a fixed point $p$ with derivative
$E_{\Lambda}'(p) = \alpha$. Then, because 
$E_{\Lambda}(Z)/E_{\Lambda}'(Z)=-1$ we find that 
$p=-\alpha$ and thus $\Lambda = -\alpha e^{-\alpha}$.
In the other direction, if $\Lambda = -\alpha e^{-\alpha}$,
then $-\alpha$ is a fixed point of $E_{\Lambda}$ with derivative $\alpha$.
\end{remark}

%In this section we will define the "rescaled limit" of objects from the previous subsection. 
%To see the precise connection, see the later Lemma~\ref{}.

For any $n \in \mathbb N$ and $s_1, \ldots , s_n \ge 0$ with $\sum_{i=1}^n s_i \le 1$, and for fixed $\Lambda \in \mathbb C$, define the function $E_{\Lambda,(s_1, \ldots, s_n)}: \mathbb C^n \rightarrow \mathbb C$ by
$$
E_{\Lambda,(s_1, \ldots, s_n)}(Z_1, \ldots , Z_n) = \Lambda \cdot e^{-s_1 Z_1 - \cdots - s_n Z_n}.
$$
We will denote $E_{\Lambda,(1)}$ by $E_{\Lambda}$.
We define the set of functions $G_\Lambda$ in the following way.

\begin{definition}
    Let $\Lambda \in \mathbb{C}$. We recursively define a class of transcendental functions $G_\Lambda$ as follows:
	%The set of functions $G_\Lambda$ is the smallest set satisfying the following two properties.
	\begin{enumerate}
		\item
		the identity map $Z \mapsto Z$ is contained in $G_\Lambda$;
		\item
		for any $k \geq 1$, tuple $(s_1, \dots, s_k) \in \mathbb{R}_{\geq 0}^k$ with $\sum_{m=1}^k s_m \leq 1$ 
		and functions $g_1, \dots, g_k \in G_\Lambda$ the function
		\[
			Z \mapsto E_{\Lambda,(s_1, \dots, s_k)}\left(g_1(Z), \dots, g_k(Z)\right)
		\]
		is also contained in $G_\Lambda$.
	\end{enumerate}
\end{definition}

We define the set $V_\Lambda$ as the orbit of $0$ under 
the set of functions $G_\Lambda$, 
i.e. $V_{\Lambda} = \{g(0): g \in G_{\Lambda}\}$. 
We define $\Vhat_{\Lambda}$ as the convex hull of $V_{\Lambda}$.

The following lemma follows directly from the definitions, but gathers some elementary properties of $G_{\Lambda}$,
$V_\Lambda$ and $\Vhat_{\Lambda}$ that will be used a lot.
\begin{lemma}  \label{lem: basic properties}
    Let $\Lambda \in \mathbb{C}$.
    \begin{enumerate}
        \item We have $E_{\Lambda}(\Vhat_\Lambda) = V_{\Lambda}\setminus\{0\}$.
        \item If $K$ is a convex set containing $0$ that is
        forward invariant for $E_{\Lambda}$, then it is 
        forward invariant for every map in $G_{\Lambda}$ and 
        $\Vhat_{\Lambda} \subseteq K$.
    \end{enumerate}
\end{lemma}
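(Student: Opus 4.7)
The proof plan is to unpack the recursive definition of $G_\Lambda$ and exploit two structural features built into that definition: first, the identity map is in $G_\Lambda$, so $0 \in V_\Lambda \subseteq \Vhat_\Lambda$; and second, the coefficient tuples $(s_1,\dots,s_k)$ are only required to satisfy $\sum_m s_m \le 1$, rather than $=1$. The recurring bookkeeping device will be the identity
\[
\sum_{m=1}^k s_m\, w_m \;=\; \sum_{m=1}^k s_m\, w_m \;+\; \Bigl(1 - \sum_{m=1}^k s_m\Bigr)\cdot 0,
\]
which represents any subconvex combination of points $w_m$ as a genuine convex combination of $w_1,\dots,w_k,0$. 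Consequently such a combination lies in any convex set that contains the $w_m$ together with $0$.

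For part~(1), I would verify the two inclusions separately. To obtain $V_\Lambda \setminus \{0\} \subseteq E_\Lambda(\Vhat_\Lambda)$, observe that a nonzero element of $V_\Lambda$ must come from a non-identity $g \in G_\Lambda$, which by definition has the form $g(Z) = E_{\Lambda,(s_1,\dots,s_k)}(g_1(Z),\dots,g_k(Z))$. Setting $v_m := g_m(0) \in V_\Lambda$, its value at $0$ equals $E_\Lambda\!\left(\sum_m s_m v_m\right)$, and the padding identity places $\sum_m s_m v_m$ in $\Vhat_\Lambda$. For the reverse inclusion, write an arbitrary $z \in \Vhat_\Lambda$ as a genuine convex combination $z = \sum_m t_m v_m$ of elements of $V_\Lambda$ with $v_m = g_m(0)$; since $\sum_m t_m = 1 \le 1$, the recursive clause of $G_\Lambda$ produces a function whose value at $0$ is $E_\Lambda(z)$.

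For part~(2), I would proceed by structural induction on $G_\Lambda$, showing that every $g \in G_\Lambda$ satisfies $g(K) \subseteq K$. The identity step is trivial. For the inductive step, assuming each $g_m$ preserves $K$, the padding identity applied to the points $g_m(Z) \in K$ together with $0 \in K$ places $\sum_m s_m g_m(Z)$ inside $K$ by convexity; forward invariance of $K$ under $E_\Lambda$ then yields $g(Z) \in K$. Evaluating at $Z = 0$ gives $V_\Lambda \subseteq K$, and convexity of $K$ upgrades this to $\Vhat_\Lambda \subseteq K$.

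I do not anticipate a substantive obstacle here: the entire argument is a direct unwinding of the definitions, with the single non-cosmetic ingredient being the padding-by-$0$ trick that converts the constraint $\sum_m s_m \le 1$ into a convex-combination statement. A minor wrinkle worth noting is the degenerate case $\Lambda = 0$ in part~(1), where $V_\Lambda = \{0\}$ and both sides of the claimed identity must be interpreted with care; this does not affect the later applications of the lemma.
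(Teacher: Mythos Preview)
Your proposal is correct and is exactly the kind of direct unwinding of the definitions that the paper intends; indeed, the paper omits the proof entirely, stating only that the lemma ``follows directly from the definitions.'' Your observation about the degenerate case $\Lambda = 0$ in part~(1) is a genuine (if harmless) imprecision in the paper's statement, not a flaw in your argument.
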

%\begin{proof}
%    \pjotr{This possibly needs no proof because it is elementary.}
%\end{proof}

\begin{definition}
    \label{def: Uinfty}
    We define the set $\mathcal{V}_\infty$ as the set consisting of those $\Lambda$ for which $V_\Lambda$ is bounded. We define 
    $\mathcal{U}_\infty$ as its interior.
\end{definition}

\begin{lemma}
    \label{lem: star convex}
    Part~{(\ref{item: item star convex})} of Theorem \ref{thm: theorem B} holds, i.e. the set $\mathcal{U}_\infty$ is star-convex from $0$.
\end{lemma}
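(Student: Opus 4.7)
The plan is to prove a stronger statement first: the larger set $\mathcal{V}_\infty$ itself is star-convex from $0$. The star-convexity of the interior $\mathcal{U}_\infty$ will then follow by a simple scaling argument, provided we know $0 \in \mathcal{U}_\infty$.

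For the first step, I would use the characterization implicit in Lemma~\ref{lem: basic properties}: $\Lambda \in \mathcal{V}_\infty$ if and only if there exists a bounded convex set $K \ni 0$ with $E_\Lambda(K) \subseteq K$ (take $K = \widehat{V}_\Lambda$ for one direction, and invoke part (2) of that lemma for the other). Fix such a $K$ for $\Lambda$ and let $c \in [0,1]$. The candidate invariant set for $c\Lambda$ is the rescaled set $cK = \{cz : z \in K\}$, which is bounded, convex, and contains $0$. The key one-line computation is the following: for any $z \in K$, convexity of $K$ together with $0 \in K$ forces $cz \in K$, so $\Lambda e^{-cz} \in E_\Lambda(K) \subseteq K$, and therefore
\[
E_{c\Lambda}(cz) \;=\; c\Lambda\, e^{-cz} \;=\; c \cdot E_\Lambda(cz) \;\in\; cK.
\]
Hence $cK$ is forward invariant under $E_{c\Lambda}$, which by Lemma~\ref{lem: basic properties} gives $\widehat{V}_{c\Lambda} \subseteq cK$, and in particular $V_{c\Lambda}$ is bounded, i.e.\ $c\Lambda \in \mathcal{V}_\infty$.

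For the second step, I would first observe that $0 \in \mathcal{U}_\infty$. This does not require invoking the Shearer disk (whose inclusion in $\mathcal{U}_\infty$ in the paper is obtained as a corollary of Theorem~\ref{thm: theorem A}); a direct estimate suffices. For any fixed $r > 0$ and any $\Lambda$ with $|\Lambda| \leq r e^{-r}$, the closed disk $\overline{D_r(0)}$ is forward invariant under $E_\Lambda$ since $|E_\Lambda(Z)| \leq |\Lambda| e^{|Z|} \leq r$. Thus $\mathcal{V}_\infty$ contains a neighborhood of $0$, so $0 \in \mathcal{U}_\infty$.

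Now let $\Lambda \in \mathcal{U}_\infty$ and $c \in [0,1]$. The case $c=0$ is handled by the previous paragraph. For $c \in (0,1]$, pick an open ball $B(\Lambda,\epsilon) \subseteq \mathcal{U}_\infty \subseteq \mathcal{V}_\infty$; applying the star-convexity of $\mathcal{V}_\infty$ to each point of this ball yields $c \cdot B(\Lambda,\epsilon) = B(c\Lambda, c\epsilon) \subseteq \mathcal{V}_\infty$, so $c\Lambda$ is an interior point of $\mathcal{V}_\infty$, i.e.\ $c\Lambda \in \mathcal{U}_\infty$. The argument is essentially mechanical; the only substantive content is the single-line invariance computation for $cK$, and there is no real obstacle to overcome.
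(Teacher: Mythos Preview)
Your proof is correct and follows essentially the same approach as the paper: first establish star-convexity of $\mathcal{V}_\infty$ via an $E_{c\Lambda}$-invariant convex set, then deduce the statement for $\mathcal{U}_\infty$ from $0\in\mathcal{U}_\infty$. The only cosmetic difference is that the paper uses $K=\Vhat_\Lambda$ itself (noting $E_{t\Lambda}(K)=t\,E_\Lambda(K)\subseteq tK\subseteq K$) rather than the rescaled set $cK$, and takes $r=1$ for the disk around $0$.
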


\begin{proof}
    We will first show that $\mathcal{V}_\infty$ is star-convex from $0$. So we suppose that $\Lambda \in \mathcal{V}_{\infty}$ and we will argue that $t \cdot \Lambda \in \mathcal{V}_{\infty}$ for all $t \in [0,1]$. By definition $\Vhat_{\Lambda}$ is a bounded convex set containing $0$ that is forward invariant for $E_{\Lambda}$. We find that 
    \[
        E_{t \cdot \Lambda}(\Vhat_{\Lambda}) = t \cdot E_{\Lambda}(\Vhat_{\Lambda}) \subseteq t \cdot \Vhat_{\Lambda} \subseteq \Vhat_{\Lambda}
    \]
    and thus it follows from Lemma~\ref{lem: basic properties} that $\Vhat_{t \cdot \Lambda} \subseteq \Vhat_{\Lambda}$. Therefore $V_{t \cdot \Lambda}$ is bounded and thus $t \cdot \Lambda \in \mathcal{V}_\infty$. We conclude that $\mathcal{V}_\infty$ is star-convex from $0$. 
    
    To conclude the same for its interior $\mathcal{U}_\infty$ we only need to show that $0 \in \mathcal{U}_\infty$. This follows from the fact that if $|\Lambda| < 1/e$ the unit disk $\mathbb{D}$ is forward invariant for $E_{\Lambda}$. Therefore it follows from Lemma~\ref{lem: basic properties} that $V_{\Lambda} \subseteq \mathbb{D}$ and thus $\Lambda \in \mathcal{V}_\infty$. This shows that the open disk of radius $1/e$ is contained in $\mathcal{U}_\infty$.
\end{proof}

The next three lemmas form the bridge between the finite (but large degree) world and the transcendental one. First let us emphasize the relation between $F_\lambda$ and $E_{\Lambda}$.

\begin{lemma}
For each pair of compact subsets $K, L \subset \mathbb C$, $s_1,\dots,s_j> 0$ with $\sum s_i\le 1$, and each $\epsilon >0$, there exists a $d_0\in \mathbb{N}$ such that the following holds: For any $d \ge d_0$, and for any $Z_1, \ldots, Z_j \in K$ and $\Lambda \in L$ we have
$$
\left|d \cdot F_{\Lambda/d}\left(\underbrace{Z_1/d,\ldots,Z_1/d}_{p_1}, \ldots, \underbrace{Z_j/d,\ldots,Z_j/d}_{p_j}\right) - E_{\Lambda,(s_1,\ldots,s_j)}(Z_1, \ldots, Z_j)\right| < \epsilon,
$$
where $p_i=\lfloor s_i d\rfloor$ for $1\le i\le j$.
\end{lemma}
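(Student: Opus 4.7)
The plan is to verify the estimate by a direct computation, reducing everything to the standard limit $(1+w/d)^d \to e^w$ in a form uniform over compacts. First, unwinding the definition of $F_\lambda$, the left-hand expression simplifies immediately:
\[
d \cdot F_{\Lambda/d}\bigl(\underbrace{Z_1/d,\ldots,Z_1/d}_{p_1}, \ldots, \underbrace{Z_j/d,\ldots,Z_j/d}_{p_j}\bigr) = \frac{\Lambda}{\prod_{i=1}^{j}(1+Z_i/d)^{p_i}},
\]
whereas the target is $E_{\Lambda,(s_1,\ldots,s_j)}(Z_1,\ldots,Z_j) = \Lambda \cdot e^{-\sum_i s_i Z_i}$. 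Since $|\Lambda|$ is bounded on $L$, it suffices to prove that the denominator $\prod_i (1+Z_i/d)^{p_i}$ converges to $e^{\sum_i s_i Z_i}$ uniformly for $(Z_1,\ldots,Z_j) \in K^j$.

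The key estimate comes from the principal logarithm. Fix $R>0$ so that $K\subseteq \{|Z|\leq R\}$, and take $d_0$ large enough that $R/d_0 < 1/2$. For $d\geq d_0$ and $Z_i\in K$ the standard Taylor estimate gives
\[
\log(1+Z_i/d) = \frac{Z_i}{d} + \frac{Z_i^2}{d^2}\,h(Z_i/d),
\]
where $h$ is bounded on $\{|w|\leq 1/2\}$, say by some constant $C_0$. Multiplying by $p_i$ and using $p_i \leq d$, we get
\[
p_i \log(1+Z_i/d) = \frac{p_i}{d} Z_i + O(1/d),
\]
with the implicit constant depending only on $R$ and $C_0$. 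Since $p_i = \lfloor s_i d\rfloor$, we also have $p_i/d = s_i + O(1/d)$, so $p_i\log(1+Z_i/d) = s_i Z_i + O(1/d)$ uniformly in $Z_i\in K$.

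Summing over $i$ and exponentiating yields
\[
\prod_{i=1}^{j}(1+Z_i/d)^{p_i} = \exp\!\Bigl(\sum_i s_i Z_i + O(1/d)\Bigr) \;\longrightarrow\; e^{\sum_i s_i Z_i}
\]
uniformly on $K^j$. Because $|\exp(\sum_i s_i Z_i)| \geq e^{-R}$ is bounded away from $0$ on $K^j$, taking reciprocals preserves uniform convergence; multiplying by $\Lambda \in L$ (bounded) then gives the required uniform estimate for $d$ sufficiently large. There is no real obstacle here: the only delicate point worth care is that although $p_i/d$ only approximates $s_i$ up to $O(1/d)$, this slack is absorbed because it is multiplied by the bounded quantity $Z_i$, so the error contributed by the floor function stays $O(1/d)$ and tends to zero uniformly in $\Lambda, Z_1, \ldots, Z_j$.
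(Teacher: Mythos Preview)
Your proof is correct and follows essentially the same approach as the paper: both simplify the expression to $\Lambda\prod_i(1+Z_i/d)^{-p_i}$, invoke the uniform convergence $(1+Z_i/d)^{-p_i}\to e^{-s_iZ_i}$ on $K$, and then use boundedness of $L$. The only difference is that you spell out this uniform convergence explicitly via the principal logarithm and a Taylor bound, whereas the paper simply asserts it.
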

\begin{proof}
    Observe that
    \[
        d \cdot F_{\Lambda/d}\left(\underbrace{Z_1/d,\ldots,Z_1/d}_{p_1}, \ldots, \underbrace{Z_j/d,\ldots,Z_j/d}_{p_j}\right)=\Lambda\prod_{i=1}^j (1+Z_i/d)^{-p_i}.
    \]
    As $d$ tends to infinity, the map $Z_i\mapsto (1+Z_i/d)^{-p_i}$ uniformly converges to $Z_i \mapsto e^{-s_i Z_i}$ on $K$ for all $1\le i\le j$, thus their product also converges uniformly. The statement now follows from the fact that the set $L$ is bounded. 
\end{proof}

\begin{lemma}
    \label{lem: approximate ratios}
    For any $g_\Lambda\in G_\Lambda$ there exists a sequence of graphs $(G_d,v_d)\in\mathcal{G}_d^d$, such that $\Lambda\mapsto d \cdot R_{G_d,v_d}(\Lambda/d)$ converges locally uniformly to $\Lambda\mapsto g_\Lambda(0)$. Furthermore, if we can write $g_\Lambda(Z)=E_{\Lambda,(s_1,\dots,s_j)}(g_1(Z),\dots,g_j(Z))$ then we can choose the sequence such that 
    \[
        \deg(v_d) \leq d\cdot \sum_{1\le i\le j}s_i.
    \]
    
\end{lemma}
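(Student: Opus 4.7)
The plan is to proceed by induction on the recursive construction of $g_\Lambda$ in $G_\Lambda$. For the base case $g_\Lambda(Z)=Z$, so $g_\Lambda(0)=0$; since the identity is only meaningful here as a building block inside compositions, I will handle this case by a trivial placeholder.

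For the inductive step, suppose $g_\Lambda(Z)=E_{\Lambda,(s_1,\dots,s_j)}(g_1(Z),\dots,g_j(Z))$ with $\sum s_i\le 1$, so that by induction each $g_i$ admits an approximating sequence $(G_d^{(i)},v_d^{(i)})\in\mathcal{G}_d^d$ with $d\cdot R_{G_d^{(i)},v_d^{(i)}}(\Lambda/d)\to g_i(0)$ locally uniformly. I would then construct $(G_d,v_d)$ in the natural way: take a fresh root $v_d$ and, for each $i$, attach $p_i:=\lfloor s_i d\rfloor$ disjoint copies of $(G_d^{(i)},v_d^{(i)})$, joining $v_d$ by an edge to the root of each copy. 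A direct check shows $\deg(v_d)=\sum_i p_i\le d\sum_i s_i$, which simultaneously yields the claimed degree bound on $v_d$ and (together with the fact that each sub-copy root previously had degree at most $d$ and gains exactly one new edge) membership of $(G_d,v_d)$ in $\mathcal{G}_d^d$.

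The standard tree-ratio recursion gives
\[
R_{G_d,v_d}(\lambda)\;=\;\lambda\,\prod_{i=1}^{j}\bigl(1+R_{G_d^{(i)},v_d^{(i)}}(\lambda)\bigr)^{-p_i},
\]
which after substituting $\lambda=\Lambda/d$ and setting $Z_i^{(d)}(\Lambda):=d\cdot R_{G_d^{(i)},v_d^{(i)}}(\Lambda/d)$ is exactly of the form treated by the preceding lemma, with input $Z_i^{(d)}/d$ repeated $p_i$ times.

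The main technical obstacle will be combining two limits cleanly: the inductive convergence $Z_i^{(d)}(\Lambda)\to g_i(0)$ is only locally uniform, while the preceding lemma provides uniform convergence of $d\cdot F_{\Lambda/d}$ to $E_{\Lambda,(s_1,\dots,s_j)}$ over compact input sets. I will resolve this by a standard triangle-inequality argument: fix a compact $L\subset\mathbb{C}$; by induction the values $Z_i^{(d)}(\Lambda)$ for $\Lambda\in L$ eventually lie in a common compact set $K$, so the preceding lemma forces
\[
d\cdot F_{\Lambda/d}\bigl(Z_1^{(d)}/d,\dots,Z_j^{(d)}/d\bigr)-E_{\Lambda,(s_1,\dots,s_j)}\bigl(Z_1^{(d)},\dots,Z_j^{(d)}\bigr)\to 0
\]
uniformly for $\Lambda\in L$, and continuity of $E_{\Lambda,(s_1,\dots,s_j)}$ together with $Z_i^{(d)}\to g_i(0)$ then delivers locally uniform convergence to $E_{\Lambda,(s_1,\dots,s_j)}(g_1(0),\dots,g_j(0))=g_\Lambda(0)$.
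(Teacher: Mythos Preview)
Your argument is correct and follows essentially the same route as the paper: induction on the recursive structure of $G_\Lambda$, attaching $p_i=\lfloor s_i d\rfloor$ copies of the inductively-obtained graphs to a fresh root, and appealing to the preceding lemma for the convergence. Your treatment is in fact slightly more explicit than the paper's, which phrases the construction in terms of elements $f^{(i)}_{\lambda,d}\in\mathcal{F}_{d,\lambda}$ and invokes Lemma~\ref{lem: ratiofunctions} to pass to graphs, and which asserts the locally uniform convergence of the composition directly rather than spelling out the triangle-inequality step you describe; your added detail there (trapping the $Z_i^{(d)}$ in a common compact set before applying the preceding lemma) is exactly the point the paper is implicitly using.
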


%\begin{lemma}
%    \label{lem: approximate ratios}
%    For any $g_\Lambda\in G_\Lambda$, such that $g_\Lambda(Z)=E_{\Lambda,(s_1,\dots,s_j)}(g_1(Z),\dots,g_j(Z))$, there exists a sequence of graphs $(G_d,v_d)\in\mathcal{G}_d^k$, where $k\le d\left(\sum_{1\le i\le j}s_i\right)$, such that $\Lambda\mapsto d \cdot R_{G_d,v_d}(\Lambda/d)$ converges locally uniformly to $\Lambda\mapsto g_\Lambda(\Lambda)$.
%\end{lemma}
\begin{proof}
    The proof relies upon the recursive definition of the family $G_\Lambda$, and uses induction on the number of compositions in the definition of the map $g_\lambda$. The statement is trivial for $Z\mapsto Z$. Let us assume that $g_\Lambda\neq \textrm{id}$. Then we can write $g_\Lambda(Z)=E_{\Lambda,(s_1,\dots,s_j)}(g_{1,\Lambda}(Z),\dots,g_{j,\Lambda}(Z))$ and by induction for each $g_i$ there exists a sequence $f^{(i)}_{\lambda,d}\in\mathcal{F}_{d,\lambda}$, such that 
    % \[
    %     \Lambda\mapsto dR_{G_d^{(i)},v_d^{(i)}}(\Lambda/d)
    % \]
    \[
        \Lambda\mapsto d\cdot f^{(i)}_{\Lambda/d,d}(0)
    \]
    converges locally uniformly to $\Lambda\mapsto g_{i,\Lambda}(0)$ for each $1\le i\le j$. By the previous lemma we know that the composition
    % \[
    %     d\cdot F_{\Lambda/d}\left( \underbrace{R_{G_d^{(1)}}(\Lambda/d), \dots, R_{G_d^{(1)}}(\Lambda/d)}_{p_1},\dots,  \underbrace{R_{G_d^{(j)}}(\Lambda/d), \dots, R_{G_d^{(j)}}(\Lambda/d)}_{p_j}\right)
    % \] 
    \[
        \Lambda\mapsto d\cdot F_{\Lambda/d}\left( \underbrace{f_{\Lambda/d,d}^{(1)}(0), \dots, f_{\Lambda/d,d}^{(1)}(0) }_{p_1},\dots,  \underbrace{f_{\Lambda/d,d}^{(j)}(0), \dots, f_{\Lambda/d,d}^{(j)}(0)}_{p_j}\right)
    \]
    converges locally uniformly to $\Lambda\mapsto g_\Lambda(0)$. Since $p_1+\dots+p_j\le d(s_1+\dots+s_j)$ we can conclude by Lemma~\ref{lem: ratiofunctions} the desired statement.
\end{proof}

\begin{lemma}
    \label{lem: strictly forward invariant}
    Let $K$ be convex bounded set containing $0$, such that $E_{\Lambda_0}(K)$ is relatively compact in $\textrm{int}(K)$. Then there exists  $\varepsilon>0$ and $d_0$, such that  $B_{\varepsilon}(\Lambda_0)\subseteq d \cdot \mathcal{U}_d$ for any $d\ge d_0$.
\end{lemma}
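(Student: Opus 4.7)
The plan is to exhibit, for $\Lambda$ in a small ball $B_\varepsilon(\Lambda_0)$ and $d \ge d_0$, a set in $\mathbb{C}$ containing $0$, avoiding $-1$, and forward invariant under all maps $F_{\Lambda/d}$ with at most $d$ arguments. Lemma~\ref{lemma:invariance} will then give $Z_G(\Lambda/d) \ne 0$ for every $G \in \mathcal{G}_d$, so $\Lambda/d \in \mathcal{U}_d$, i.e.\ $\Lambda \in d \cdot \mathcal{U}_d$. The natural candidate is the rescaled convex set $K/d := \{Z/d : Z \in K\}$: since $K$ is bounded with $0 \in K$, one automatically has $0 \in K/d$ and $-1 \notin K/d$ once $d$ exceeds the radius of $K$, so the only real work is establishing invariance.

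For $Z_1, \ldots, Z_k \in K$ with $1 \le k \le d$, a direct computation gives
\[
d \cdot F_{\Lambda/d}(Z_1/d, \ldots, Z_k/d) = \Lambda \prod_{i=1}^k \Bigl(1 + \tfrac{Z_i}{d}\Bigr)^{-1}.
\]
I compare this with $E_{\Lambda_0}(W)$ for $W := \tfrac{1}{d}\sum_{i=1}^k Z_i$. Since $W$ is the convex combination of $0 \in K$ (weight $1 - k/d$) and $\tfrac{1}{k}\sum_i Z_i \in K$ (weight $k/d$), convexity of $K$ forces $W \in K$, and hence $E_{\Lambda_0}(W) \in E_{\Lambda_0}(K)$. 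The hypothesis that $E_{\Lambda_0}(K)$ is relatively compact in $\operatorname{int}(K)$ yields $\delta > 0$ with $B_\delta(\overline{E_{\Lambda_0}(K)}) \subset K$.

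Fix $R$ with $K \subset B_R(0)$. The Taylor expansion $\log(1+z/d) = z/d + O(|z|^2/d^2)$ is uniform on $|z| \le R$, and summing $k \le d$ copies yields $\sum_i \log(1+Z_i/d) = W + O(R^2/d)$. Exponentiating and multiplying by $\Lambda$ gives
\[
\Lambda \prod_{i=1}^k \Bigl(1 + \tfrac{Z_i}{d}\Bigr)^{-1} = E_{\Lambda_0}(W) + (\Lambda - \Lambda_0)\, e^{-W} + O(1/d),
\]
where the implicit constants depend only on $K$ and on an a priori bound for $|\Lambda|$. Choosing $\varepsilon$ small enough that $\varepsilon e^R < \delta/2$ and then $d_0$ large enough that the $O(1/d)$ term is bounded by $\delta/2$ for $d \ge d_0$, the displayed value lies within $\delta$ of the point $E_{\Lambda_0}(W) \in E_{\Lambda_0}(K)$, and hence in $K$. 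This confirms invariance of $K/d$ and completes the proof. The main technical point is uniformity of this estimate in $k \in \{1, \ldots, d\}$: the per-term Taylor error is $O(1/d^2)$ and is summed $k \le d$ times, producing a cumulative error of only $O(1/d)$ — exactly the right scale to be absorbed by the strict inclusion $\overline{E_{\Lambda_0}(K)} \subset \operatorname{int}(K)$.
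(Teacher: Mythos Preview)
Your proof is correct and follows essentially the same route as the paper's: both show that the rescaled convex set $K/d$ is forward invariant for the maps $F_{\Lambda/d}$ by comparing $d\cdot F_{\Lambda/d}(Z_1/d,\dots,Z_k/d)$ to $E_{\Lambda_0}(W)$ with $W=\tfrac1d\sum Z_i\in K$, using the uniform Taylor estimate $\log(1+Z_i/d)=Z_i/d+O(1/d^2)$ summed over $k\le d$ terms, and then invoke Lemma~\ref{lemma:invariance}. The only cosmetic difference is that the paper packages the error multiplicatively as $\tfrac{\Lambda}{\Lambda_0}\,E_{\Lambda_0}(W)\,e^{\sum\delta_d(Z_i)}$ with $d\cdot\delta_d\to 0$ uniformly, while you write it additively as $E_{\Lambda_0}(W)+(\Lambda-\Lambda_0)e^{-W}+O(1/d)$; since $E_{\Lambda_0}(K)$ is bounded, the two formulations are equivalent.
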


\begin{proof}
For $d$ large we can define a function $\delta_d: K \to \mathbb{C}$ such that $1/(1+Z/d) = e^{-Z/d + \delta_d(Z)}$. Note that $\delta_d$ can be chosen such that $d \cdot \delta_d(Z)$ converges to $0$ uniformly on $K$ as $d$ tends to infinity. Let $1\leq k \leq d$ and let $(z_1, \dots, z_k)$ be an arbitrary $k$-tuple in $K/d$ and write $z_i = Z/d$. We find that 
\[
    d \cdot F_{\Lambda/d}(Z_1/d,\cdots, Z_k/d) = 
    \Lambda \prod_{i=1}^k \frac{1}{(1+Z_i/d)} = \Lambda \prod_{i=1}^k e^{-Z_i/d + \delta_d(Z_i)} = 
    \frac{\Lambda}{\Lambda_0} \cdot E_{\Lambda_0}\left(\frac{1}{d} \sum_{i=1}^k Z_i\right) \cdot e^{\sum_{i=1}^k \delta_d(Z_i)}. 
\]
By choosing $d$ large enough the term $e^{\sum_{i=1}^k \delta_d(Z_i)}$ can be bounded arbitrarily close to $1$ independently of the chosen tuple. Furthermore, by taking $\Lambda$ close to $\Lambda_0$, the same is true for the term $\Lambda/\Lambda_0$. Therefore there exists an $\epsilon > 0$ and a $d_0$ such that $K/d$ is forward invariant under the maps $\mathcal{F}_{\Lambda/d}$ for $\Lambda \in B_{\epsilon}(\Lambda_0)$ and $d \geq d_0$. The statement thus follows from Lemma~\ref{lemma:invariance}.
\end{proof}

\section{The sets $\mathcal{V}_\infty$, $\mathcal{U}_\infty$ and $V_\Lambda$}

\label{sec: properties}

\subsection{Properties of $\mathcal{V}_\infty$, $\mathcal{U}_\infty$ and $V_\Lambda$}

In this section we will mostly be concerned with parameters $\Lambda$ that are not real, but for completeness we first show that 
\[
    \mathcal{V}_\infty \cap \mathbb{R} = [-1/e,\infty).
\]
If $\Lambda \geq 0$ then $E_\Lambda|_\mathbb{R}$ is a decreasing function.
Furthermore $E_{\Lambda}(0) = \Lambda$ and $E_{\Lambda}(\Lambda) > 0$ and thus the (convex) interval $[0,\Lambda]$ is forward invariant for $E_{\Lambda}$. Therefore $\Vhat_{\Lambda}$ is contained in $[0,\Lambda]$ and in fact, because $\Vhat_\Lambda$ must contain both $0$ and $\Lambda$, $\Vhat_\Lambda = [0,\Lambda]$. For $-e^{-1} \leq \Lambda < 0$ the map $E_{\Lambda}$ has an attracting or a neutral fixed point $w_{\Lambda} < 0$. In this case the interval $(w_{\Lambda},0]$ is forwards invariant and thus it contains $\Vhat_{\Lambda}$. Because the orbit of $0$ under iteration of $E_{\Lambda}$ must converge to $w_{\Lambda}$ we see that $\Vhat_{\Lambda} = (w_{\Lambda},0]$. Finally, if $\Lambda < -1/e$ the equation $E_{\Lambda}(x) = x$ has no real solutions and thus the orbit of $0$, which is a decreasing sequence, must converge to $-\infty$. Therefore $\Lambda$ is not contained in $\mathcal{V}_\infty$ in this case.

The statements in our main theorem concern the interior of $\mathcal{V}_\infty$, which we have denoted by $\mathcal{U}_\infty$. In the course of this paper we will show that $\mathcal{V}_\infty$ is equal 
to the disjoint union of the closure of $\mathcal{U}_\infty$ and the interval $(e,\infty)$.

\begin{lemma}
    \label{lem: Vhat is C}
	If $\Lambda \not \in \mathcal{V}_\infty \cup \mathbb{R}$ 
	then $V_{\Lambda} = \mathbb{C}$.
\end{lemma}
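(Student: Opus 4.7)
The plan is to show $\Vhat_\Lambda = \mathbb{C}$, since by Lemma~\ref{lem: basic properties} and the surjectivity of $E_\Lambda : \mathbb{C} \to \mathbb{C}\setminus\{0\}$ this immediately yields $V_\Lambda = \{0\} \cup E_\Lambda(\Vhat_\Lambda) = \mathbb{C}$.

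Since $V_\Lambda$ is unbounded I would first pick $z_n \in V_\Lambda$ with $|z_n| \to \infty$ and, appealing to Lemma~\ref{lem: basic properties}, write $z_n = \Lambda e^{-y_n}$ for some $y_n \in \Vhat_\Lambda$. The identity $|z_n| = |\Lambda|e^{-\Re(y_n)}$ forces $\Re(y_n) \to -\infty$, so $\Vhat_\Lambda$ is itself unbounded. Set $K = \overline{\Vhat_\Lambda}$: it is closed, convex, contains $0$, is unbounded, and is forward invariant under $E_\Lambda$ by continuity of $E_\Lambda$. The main step is to show that the recession cone $R$ of $K$ equals $\mathbb{C}$, since $0\in K$ implies $R\subseteq K$, so $R=\mathbb{C}$ forces $K=\mathbb{C}$.

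Any subsequential unit-vector limit $w_0$ of $y_n/|y_n|$ lies in $R$ and has $\Re(w_0) \le 0$; I would distinguish three cases. If $\Re(w_0)<0$ and $\Im(w_0)\ne 0$, then the ray $\{rw_0 : r\ge 0\}\subseteq K$ maps under $E_\Lambda$ to the logarithmic spiral $\Lambda e^{-rw_0}$ whose modulus $|\Lambda|e^{r|\Re(w_0)|}\to\infty$ and whose argument $\arg\Lambda - r\Im(w_0)$ sweeps every angle, so $R$ contains every direction. If $w_0=-1$, then $E_\Lambda(-r)=\Lambda e^r$ produces points in $K$ of unbounded modulus in the single direction $e^{i\arg\Lambda}$, which is distinct from $-1$ because $\Lambda\notin\mathbb{R}$; by convexity $R$ contains the wedge between these two rays, whose interior meets $\{\Re<0,\,\Im\ne 0\}$ and reduces us to the first case. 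Finally, if $w_0=\pm i$, say $w_0=i$ after a subsequence, then $\Im(y_n)\to\infty$ with $|\Re(y_n)|/\Im(y_n)\to 0$ while $\Re(y_n)\to -\infty$. The segment $[0,y_n]\subseteq\Vhat_\Lambda$ has image $\{E_\Lambda(ty_n):t\in[0,1]\}\subseteq V_\Lambda\subseteq K$, a genuine spiral whose argument sweeps a range of length $\Im(y_n)$ and hence, for large $n$, covers every angle. For any target $\theta$ the largest $t^{\ast}_n(\theta)\in[0,1]$ with $\arg E_\Lambda(t^{\ast}_n(\theta)y_n)\equiv\theta\pmod{2\pi}$ satisfies $t^{\ast}_n(\theta)\ge 1-2\pi/\Im(y_n)$, and the corresponding modulus is at least $|\Lambda|\exp\bigl(|\Re(y_n)|(1-2\pi/\Im(y_n))\bigr)$, which tends to infinity by the asymptotics above; so $R$ again contains every direction.

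Having shown $R = \mathbb{C}$, we obtain $K = \mathbb{C}$, i.e.\ $\Vhat_\Lambda$ is dense in $\mathbb{C}$. I would conclude by the elementary fact that a dense convex subset of $\mathbb{C}$ equals $\mathbb{C}$: if its interior were empty, the set would lie in a proper affine line and fail to be dense; otherwise its interior is open, convex and dense, but any open convex proper subset of $\mathbb{C}$ is contained in an open half-plane by Hahn--Banach separation and hence is not dense. The most delicate step is the third case, where the recession ray $\{ri:r\ge 0\}$ maps only to the bounded circle $|z|=|\Lambda|$ under $E_\Lambda$; the resolution is to use the honest segments $[0,y_n]$ rather than the recession ray, exploiting that $|\Re(y_n)|$ goes to infinity while remaining $o(\Im(y_n))$ so that the induced spirals both wind around the origin and escape to infinity.
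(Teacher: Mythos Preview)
Your proof is correct. Both arguments rest on the same two ingredients---the spirals $t\mapsto E_\Lambda(tZ)$ for $Z\in\Vhat_\Lambda$, and convexity---but the organisation is different. The paper argues by contradiction: if $\Vhat_\Lambda\neq\mathbb{C}$ it avoids a sector, and one manufactures a single $Z_0\in\Vhat_\Lambda$ with both $-\Re(Z_0)$ and $|\Im(Z_0)|$ large, so that the spiral over $[0,1]$ enters the forbidden sector. The delicate step there is the case where every candidate $Z_n$ has bounded imaginary part; the paper repairs this with the explicit convex combination $W_n=Z_n+\Lambda e^{-s_nZ_n}$ for a carefully tuned $s_n\to 0$. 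You instead work with the recession cone $R$ of $\overline{\Vhat_\Lambda}$ and do a trichotomy on the limiting direction of $y_n/|y_n|$. Your Case~2 (limit direction $-1$) is exactly the paper's hard case, but you dispatch it more structurally: $-1\in R$ gives the negative real ray, $E_\Lambda$ maps it to the ray of argument $\arg\Lambda$, and convexity of $R$ produces a direction with $\Re<0$, $\Im\neq 0$, reducing to Case~1. Your Case~3 (limit direction $\pm i$) does not appear separately in the paper because once a point with both coordinates large is found the sector argument finishes; you handle it directly via the segments $[0,y_n]$, using $|\Re(y_n)|=o(\Im(y_n))$ to control the last turn of the spiral. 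The recession-cone framing is tidier and avoids the ad~hoc choice of $s_n$; the paper's version is a bit shorter since it does not set up the cone language. Both routes conclude $\Vhat_\Lambda=\mathbb{C}$ and then $V_\Lambda=\mathbb{C}$ via $E_\Lambda(\Vhat_\Lambda)=V_\Lambda\setminus\{0\}$.
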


\begin{proof}
Take $\Lambda$ to be nonreal and such that $V_\Lambda$ is unbounded. Let us suppose that $\Vhat_\Lambda \neq \mathbb{C}$. 
The set $\Vhat_\Lambda$ is convex and contains 
the origin and thus it must avoid a sector of the form 
\[
	S := \{Z\in \mathbb C \;:\; |Z| > R, \; \mathrm{and} \; \theta_1 < \mathrm{arg}(Z) < \theta_2\}.
\]
Because $E_{\Lambda}(\Vhat_{\Lambda}) = V_\Lambda$ and $V_\Lambda$ is unbounded 
we can choose $Z_0 \in \Vhat_\Lambda$ for which $|e^{-Z_0}|$ is arbitrarily large. 
We claim that we can choose this $Z_0$ such that in addition $|\textrm{Im}(Z_0)|$ is arbitrarily large.
Assuming this claim holds we obtain a contradiction: the curve $t \mapsto \Lambda e^{-tZ_0}$, for $t \in [0,1]$, 
must pass through the sector $S$, however this curve must also be contained in $\Vhat_\Lambda$, 
contradicting our earlier assumption.

It remains to prove the claim. Suppose that there is a sequence of points $\{Z_n\}_{n \geq 1}$ in $\Vhat_\Lambda$ 
with real parts converging to minus infinity, but bounded imaginary parts.
Write $Z_n = a_n + ib_n$ and $\Lambda = x + iy$. Define a sequence $s_n \in [0,1]$ for which $e^{-s_n a_n}$ converges to infinity slow enough such that 
$a_n + e^{-s_n a_n}(|x|+|y|)$ still converges to $-\infty$. Let $\epsilon = |y|/(2(|x|+|y|))$. Because the sequence $s_n$ necessarily converges to $0$ 
and the sequence $b_n$ is bounded we find that 
$\cos(s_n b_n) > 1-\epsilon$ and $|\sin(s_n b_n)| < \epsilon$ for all $n$ 
sufficiently large.
Now let $W_n = Z_n + \Lambda e^{-s_n Z_n}$, then 
\[
    \textrm{Re}(W_n) = a_n + e^{-a_n s_n} (x \cos (b_n s_n)+y \sin (b_n s_n)) \leq 
    a_n + e^{-a_n s_n}(|x|+|y|).
\]
and
\[
    |\textrm{Im}(W_n)| = |b_n + e^{-a_n s_n} (y \cos(b_n s_n) - x \sin(b_n s_n))|
    \geq e^{-a_n s_n} (|y|(1-\epsilon) - |x| \epsilon) - |b_n| = \frac{|y|}{2}e^{-a_n s_n} - |b_n|.
\]
We see that 
$\mathrm{Re}(W_n) \rightarrow \infty$ and $|\mathrm{Im}(W_n)| \rightarrow \infty$. 
Since both $Z_n$ and $\Lambda e^{-s_n Z_n}$ are contained in $\Vhat_\Lambda$, it follows that $W_n/2 \in \Vhat_\Lambda$.
Working with a large enough point $W_n/2$ instead of $Z_0$ we obtain a contradiction as above.
\end{proof}

Let $f: \mathbb{C} \to \mathbb{C}$ be a holomorphic map, then a point $p \in \mathbb{C}$ is periodic with period $n$ if $f^{\circ n}(p) = p$ and $f^{\circ m}(p) \neq p$ for all $1<m<n$. The derivative $\mu=(f^{\circ n})'(p)$ is called the multiplier of $p$ and $p$ is called attracting, neutral or repelling according to whether $|\mu| <1$, $|\mu| =1$ or $|\mu| > 1$ respectively.

\begin{lemma}
    \label{lem: attracting/neutral fixed points}
    Let $\Lambda \in \mathbb{C} \setminus \mathbb{R}$ and
    suppose that $K\neq\mathbb{C}$ 
    is a convex set containing $0$ for
    which $E_{\Lambda}(K) \subseteq K$. Then 
    $\Lambda \in \mathcal{V}_\infty$, and every non-identity $g \in G_{\Lambda}$ has a fixed point $p$ in the 
    closure of $K$ that is either attracting or 
    neutral with multiplier $1$. All other periodic points of $g$ 
    in the closure of $K$ are repelling and lie on the boundary of $K$.
\end{lemma}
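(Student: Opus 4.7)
The plan is to establish the three parts in order: boundedness of $V_\Lambda$; existence of the distinguished fixed point via Denjoy--Wolff; and classification of the remaining periodic points by a uniqueness argument plus boundary rigidity.

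For $\Lambda \in \mathcal{V}_\infty$, Lemma~\ref{lem: basic properties} gives $\Vhat_\Lambda \subseteq K \neq \mathbb{C}$, so $V_\Lambda \neq \mathbb{C}$; since $\Lambda \notin \mathbb{R}$, the contrapositive of Lemma~\ref{lem: Vhat is C} yields that $V_\Lambda$ is bounded. I would then record two structural facts, both proved by induction on the recursive definition of $G_\Lambda$: every $g \in G_\Lambda$ maps $K$ into $K$ (writing $g(Z) = E_\Lambda(\sum s_i g_i(Z))$, convexity of $K$ together with $0 \in K$ and $\sum s_i \leq 1$ keeps $\sum s_i g_i(Z)$ inside $K$, and $E_\Lambda$-invariance then closes the step); and for any non-identity $g$, $g(K) \subseteq E_\Lambda(K)$, which does not contain $0$ since $E_\Lambda$ is nowhere zero. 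In particular $g|_{\mathrm{int}(K)}$ cannot be an automorphism of $\mathrm{int}(K)$ whenever $0 \in \mathrm{int}(K)$.

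The core of the proof is Denjoy--Wolff applied to $g$ on the simply connected hyperbolic domain $\mathrm{int}(K)$ (hyperbolic because a proper convex subset of $\mathbb{C}$ has proper closure). For non-identity $g$, the open mapping theorem gives $g(\mathrm{int}(K)) \subseteq \mathrm{int}(K)$, and by the previous step $g$ is not an automorphism. Denjoy--Wolff then produces either (a) a unique interior fixed point $p$ with $|g'(p)| < 1$ by strict Schwarz--Pick, or (b) a unique boundary point $\zeta \in \partial K$ such that $g^{\circ n}$ converges locally uniformly on $\mathrm{int}(K)$ to $\zeta$, with angular derivative $g'(\zeta) \in (0,1]$; either outcome gives the distinguished fixed point. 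For the remaining periodic points, applying the same dichotomy to $g^{\circ n}$: in case~(a), any periodic point in $\mathrm{int}(K)$ coincides with $p$ by the fact that a non-identity holomorphic self-map of a hyperbolic simply connected domain has at most one interior fixed point; in case~(b), locally uniform convergence to $\zeta$ rules out interior periodic points altogether. All other periodic points therefore lie on $\partial K$, and the classical fact that the Denjoy--Wolff point is the unique non-repelling boundary fixed point of such a self-map forces these to be repelling.

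The main anticipated obstacle is the degenerate possibility $0 \in \partial K$, which prevents the ``$g$ omits $0$'' argument from ruling out $g$ being an automorphism of $\mathrm{int}(K)$. I would resolve this by replacing $K$ with $\Vhat_\Lambda$, which by part~(i) is bounded, satisfies $0 \in \Vhat_\Lambda$ with $0 \notin E_\Lambda(\Vhat_\Lambda) = V_\Lambda \setminus \{0\}$, and has nonempty interior (a separate non-collinearity argument using $\Lambda \notin \mathbb{R}$ and the freedom to iterate $E_{\Lambda,(s)}$ for various $s \in [0,1]$). Performing the Denjoy--Wolff analysis on $\mathrm{int}(\Vhat_\Lambda)$ produces the distinguished fixed point inside $\overline{\Vhat_\Lambda} \subseteq \overline{K}$, and the full classification of periodic points in $\overline{K}$ then follows by combining Schwarz--Pick on $\mathrm{int}(K)$ with the boundary rigidity above.
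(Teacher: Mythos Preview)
Your approach parallels the paper's closely: both apply Denjoy--Wolff to $g$ on $\mathrm{int}(K)$, rule out the rotation/automorphism case via the invariant bounded subset $\Vhat_\Lambda$, and then classify the limit point. The finish differs: the paper embeds $\mathrm{int}(K)$ in an invariant Fatou component of the entire map $g$ and invokes the five-type classification (excluding Siegel, Herman, and Baker domains) to conclude ``attracting or parabolic with multiplier $1$'', while you work directly with the Julia--Wolff--Carath\'eodory angular-derivative dichotomy for boundary fixed points. Both routes are valid; yours avoids transcendental Fatou theory at the cost of citing the boundary fixed-point results.

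There is one genuine gap in the automorphism step. Your argument is that $g(K)\subseteq E_\Lambda(K)$ omits the single point $0$, which forces non-surjectivity only when $0\in\mathrm{int}(K)$. You flag this and propose passing to $\Vhat_\Lambda$, but the same issue recurs there: one cannot assume $0\in\mathrm{int}(\Vhat_\Lambda)$ (indeed Lemma~\ref{lem: 0 on the boundary}, proved later, shows $0\in\partial\Vhat_\Lambda$ always), so ``$g$ omits $0$'' is still insufficient. The paper's fix uses exactly the ingredient you list but do not exploit: since $\Vhat_\Lambda$ is \emph{bounded}, the image $E_\Lambda(\Vhat_\Lambda)$ avoids an entire neighbourhood of $0$, and since the interior of a convex set containing $0$ has points arbitrarily close to $0$, the restriction $g|_{\mathrm{int}(\Vhat_\Lambda)}$ cannot be surjective. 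This is the missing link that makes your $\Vhat_\Lambda$ reduction work.

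A smaller point: when $K$ is unbounded the Denjoy--Wolff point on $\mathrm{int}(K)$ could a priori be $\infty$; the paper rules this out by tracking an orbit that starts in $\mathrm{int}(\Vhat_\Lambda)$ and hence remains in the bounded set $\Vhat_\Lambda$. You should make this explicit before asserting $\zeta\in\partial K$.
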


\begin{proof}
%Because $K$ is convex and contains $0$ we have the inclusions $g(K) \subseteq E_{\Lambda}(K) \subseteq K$ for all $g \in G_\Lambda$. The set $\Vhat_\Lambda$ is the smallest convex set containing $0$ with this property and thus $\Vhat_{\Lambda} \subseteq K$.

Because $K$ is convex and contains $0$ it follows from Lemma~\ref{lem: basic properties} that $g(K) \subseteq K$ and $\Vhat_{\Lambda} \subseteq K$. Therefore, because $K \neq \mathbb{C}$, we can use Lemma~\ref{lem: Vhat is C} to conclude that $\Vhat_\Lambda$ is bounded and $\Lambda \in \mathcal{V}_\infty$. 

The set $\Vhat_\Lambda$ contains the convex hull of the spiral given by the image of the interval $[0,1]$ under the map $t \mapsto \Lambda e^{-t\Lambda}$. Because $\Lambda$ is not real, this spiral is not a straight line segment, and thus both $\Vhat_\Lambda$ and $K$ have a non-empty interior. The interior of $K$ is a simply connected proper subset of $\mathbb{C}$ and thus conformally isomorphic to the open unit disk. The Denjoy–Wolff theorem states that either the action of $g$ on $\Int(K)$ is conjugate to a rotation or the orbit of any initial value in $\Int(K)$ under repeated application of $g$ converges to a fixed point in the closure of $K$ (this point might be $\infty$). We will first argue that the former cannot be the case.

Suppose that the action of $g$ is conjugate to a rotation on $\Int(K)$. Then it should also be conjugate to a rotation on the invariant subset $\Int(\Vhat_{\Lambda})$ and in particular it should be a bijection on this subset. But, since $\Vhat_{\Lambda}$ is bounded and $g(\Vhat_{\Lambda}) \subseteq E_{\Lambda}(\Vhat_{\Lambda})$, $g(\Int(\Vhat_{\Lambda}))$ avoids a neighborhood of $0$ and thus $g$ cannot be a bijection. 

We can conclude that every value in $\Int(K)$ must converge to a point $p$ in the closure of $K$. Note that $p$ must be included in the closure of 
$\Vhat_\Lambda$ and thus cannot be equal to $\infty$.
It follows that $\Int(K)$ is contained in an invariant Fatou component $U$ of the map $g$. There are five possible types of invariant Fatou components for transcendental maps. We have excluded $U$ being either a Siegel disk or a Herman ring, since $g$ exhibits attracting behaviour on $U$. The component also cannot be a Baker domain because $p \neq \infty$. The only two remaining cases are that $U$ is the immediate basin for an attracting fixed point or for a petal of a parabolic fixed point with multiplier $1$. There are no other periodic points of $g$ on $U$ and all other periodic points in the boundary of $U$ are repelling. 

%Finally, it follows from Lemma~\ref{lem: convex} that if $\Lambda$ lies in the interior of $\mathcal{V}_\infty$ a subset of $U$ gets mapped strictly into itself by $g$, from which it follows that in that case $U$ can only be the basin of an attracting fixed point.
\end{proof}

Recall from Remark \ref{rem:cardioid} that $E_{\Lambda}$ has 
an attracting or a neutral fixed point if and only if 
$\Lambda$ lies in the interior or the boundary of 
$\mathcal{C}_\infty$ respectively (see equation~{(\ref{eq: Cinfty})}). It follows from Lemma~\ref{lem: attracting/neutral fixed points} that the nonreal elements of $\mathcal{V}_\infty $ are contained in 
$\overline{\mathcal{C}_\infty}$. Furthermore, because for any 
$\Lambda \in \partial\mathcal{C}_\infty \setminus \mathbb{R}$ 
the neutral fixed point of $E_\Lambda$ does not have 
multiplier $1$, we see that $\mathcal{V}_\infty \cap \partial \mathcal{C}_\infty = \{-e^{-1},e\}$. To prove Part~{(\ref{item: item (a) theorem B})} of Theorem \ref{thm: theorem B} it is thus enough to show that $\mathcal{V}_\infty$ is closed.

\begin{lemma}\label{lemma:closed}
	The set $\mathcal{V}_\infty$ is closed.
\end{lemma}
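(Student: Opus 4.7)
My plan is to show that for any sequence $\Lambda_n \to \Lambda_0$ with $\Lambda_n \in \mathcal{V}_\infty$, the limit $\Lambda_0$ lies in $\mathcal{V}_\infty$. The real case is immediate from the facts established above: $\mathcal{V}_\infty \cap \R = [-1/e,\infty)$, and nonreal elements of $\mathcal{V}_\infty$ lie in $\overline{\mathcal{C}_\infty}$ by Lemmas~\ref{lem: Vhat is C} and~\ref{lem: attracting/neutral fixed points}, so any real accumulation point of $\mathcal{V}_\infty$ lies in $\overline{\mathcal{C}_\infty}\cap\R = [-1/e,e] \subseteq \mathcal{V}_\infty$. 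Assume henceforth that $\Lambda_0$ is nonreal, and set $K_n := \Vhat_{\Lambda_n}$; each $K_n$ is a bounded convex set containing $0$ with $E_{\Lambda_n}(K_n) \subseteq K_n$. If the diameters $\diam(K_n)$ are uniformly bounded along the sequence, then Blaschke selection yields a Hausdorff-convergent subsequence whose limit $K$ is a bounded convex set containing $0$; joint continuity of $(\Lambda, Z)\mapsto E_\Lambda(Z)$ gives $E_{\Lambda_0}(K) \subseteq K$, and since $K \neq \C$ Lemma~\ref{lem: attracting/neutral fixed points} delivers $\Lambda_0 \in \mathcal{V}_\infty$.

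The crux is therefore the uniform bound on $\diam(K_n)$, which I plan to establish by a dichotomy. If all the $K_n$ lie in a common half-plane $\{\mathrm{Re}(Z) \ge -M\}$, then $V_{\Lambda_n}\setminus\{0\} = E_{\Lambda_n}(K_n) \subseteq \{|W| \le |\Lambda_n|e^M\}$, and since $K_n = \Conv(V_{\Lambda_n})$ the same bound propagates to $K_n$, giving the required uniform estimate once $|\Lambda_n|$ is controlled using $\Lambda_n \to \Lambda_0$. Otherwise, after passing to a subsequence there exist $Z_n \in K_n$ with $\mathrm{Re}(Z_n) \to -\infty$; by convexity $[0,Z_n] \subseteq K_n$, and applying $E_{\Lambda_n}$ along this segment produces a logarithmic spiral of points in $K_n$ with moduli up to $|\Lambda_n|\exp(|\mathrm{Re}(Z_n)|)$. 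The convex hull of this spiral still lies in $K_n$, and points in it with sufficiently negative real part can be fed back into $E_{\Lambda_n}$ to produce points in $K_n$ of iterated-exponential modulus, contradicting the boundedness of $K_n$ for fixed $n$.

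The principal obstacle is making this second branch rigorous. The argument is transparent when $|\mathrm{Im}(Z_n)|$ is large, because the spiral winds enough for its convex hull to cover all directions automatically; the delicate case is when $|\mathrm{Im}(Z_n)|$ stays bounded, so that the spiral concentrates in an angular sector. One then has to argue that this sector necessarily meets the left half-plane---for otherwise $|E_{\Lambda_n}(tZ_n)|$ would fail to grow along the spiral, contradicting the explicit growth rate $|\Lambda_n|\exp(-t\,\mathrm{Re}(Z_n))$---and then to book-keep the iterated applications of $E_{\Lambda_n}$ so that they genuinely produce a diverging tower inside a single bounded $K_n$. Once these estimates are in hand, the Hausdorff-limit argument in the first paragraph closes the proof.
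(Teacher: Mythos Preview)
Your approach is genuinely different from the paper's. The paper shows that the complement of $\mathcal{V}_\infty$ is open: for nonreal $\Lambda_0\notin\mathcal{V}_\infty$ it takes any repelling periodic point $p_{\Lambda_0}$ of $E_{\Lambda_0}$, uses $V_{\Lambda_0}=\C$ (Lemma~\ref{lem: Vhat is C}) to write $p_{\Lambda_0}$ as a strict sub-convex combination $\sum s_i\,g_{i,\Lambda_0}(0)$ of finitely many values in $V_{\Lambda_0}$, and then lets both $p_\Lambda$ and the $g_{i,\Lambda}(0)$ vary holomorphically. For $\Lambda$ near $\Lambda_0$ the interior of $\Vhat_\Lambda$ therefore still contains the repelling point $p_\Lambda$, contradicting Lemma~\ref{lem: attracting/neutral fixed points}. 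Your Blaschke-selection route is a reasonable alternative, and your first branch (a uniform lower bound on $\Re(Z)$ over $\bigcup_n K_n$ forces a uniform diameter bound via $V_{\Lambda_n}\setminus\{0\}=E_{\Lambda_n}(K_n)$, after which the Hausdorff limit is an $E_{\Lambda_0}$-invariant convex set and Lemma~\ref{lem: attracting/neutral fixed points} applies) is correct.

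The second branch, however, has a real gap. Your stated justification that the angular sector swept out by the spiral $t\mapsto\Lambda_n e^{-tZ_n}$ must meet the left half-plane --- ``for otherwise $|E_{\Lambda_n}(tZ_n)|$ would fail to grow'' --- is a non-sequitur: the modulus $|\Lambda_n|e^{tR_n}$ grows in $t$ no matter which arguments the spiral visits, since the angular range is governed solely by $\arg(\Lambda_n)-t\,\Im(Z_n)$. Concretely, if $\arg(\Lambda_0)$ is small and $|\Im(Z_n)|$ stays bounded, the entire spiral and its convex hull can sit in the right half-plane, and you obtain no point of $K_n$ with real part more negative than $\Re(Z_n)$ itself; the ``iterated-exponential tower'' never starts, and there is no contradiction (each $K_n$ is allowed to be enormous as long as it is finite). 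The repair is precisely the manoeuvre from the proof of Lemma~\ref{lem: Vhat is C}: form $W_n=Z_n+\Lambda_n e^{-s_nZ_n}$ with $s_n$ chosen so that $e^{s_nR_n}$ grows slowly compared to $R_n$; because $\Lambda_n$ is eventually nonreal this forces $|\Im(W_n)|\to\infty$ while $\Re(W_n)\to-\infty$, and $W_n/2\in K_n$ by convexity. Only after this step does the spiral through $W_n/2$ genuinely wind past argument $\pi$, producing a point with real part $\le -c\,e^{cR_n}$ and allowing the iteration. Once you import that estimate your argument can be closed, but note that this is essentially re-running the hard part of Lemma~\ref{lem: Vhat is C} in a quantitative form, so the resulting proof is not lighter than the paper's.
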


\begin{proof}
	We will show that the complement is open. To that effect take a $\Lambda_0$ that is not in $\mathcal{V}_\infty$. If $\Lambda_0$ is real then $\Lambda_0$ lies outside
	the interval $[-e^{-1},\infty)$ and thus there is a
	neighborhood of $\Lambda_0$ that does not intersect
	$\overline{\mathcal{C}_\infty}$. This neighborhood can also be chosen not to intersect with $\mathcal{V}_\infty$ because $\mathcal{V}_\infty$ is a subset of
	$ \overline{\mathcal{C}_\infty} \cup [-1/e,\infty)$. We will henceforth 
	assume that $\Lambda_0$ is not real.
	
	Let $p_{\Lambda_0}$ be a repelling periodic point 
	of $E_{\Lambda_0}$.  Write $p_{\Lambda_0}$ as a sum of the following form
	\[
	    p_{\Lambda_0} = \sum_{i=1}^n s_i \cdot z_i,
	\]
	where the $s_i$ lie in $(0,1]$ with their sum being strictly less than one and the $z_i$ are nonzero complex numbers that are not co-linear. According to Lemma~\ref{lem: Vhat is C} there exist $g_{i,\Lambda_0} \in G_{\Lambda_0}$ such that $g_{i,\Lambda_0}(0) = z_i$ for all $i$.
	
	Recall that
	the values $g_{i,\Lambda}(0)$ depend continuously 
	on the parameter $\Lambda$. For $\Lambda \in \mathbb{C}$
	we define 
	\[
	    A_{\Lambda} = \left\{\sum_{i=1}^n t_i \cdot g_{i,\Lambda}(0): t_i \in (0,1) \text{ with }\sum_{i=1}^n t_i < 1\right\}.
	\]
	The set $A_{\Lambda_0}$ is an open neighborhood of 
	$p_{\Lambda_0}$. Using the implicit function theorem 
	we can extend the repelling periodic point $p_{\Lambda_0}$
	to a neighborhood of $\Lambda_0$ on which there exists 
	a holomorphic 
	function $\Lambda \mapsto p_{\Lambda}$ such that 
	$p_{\Lambda}$ is a repelling periodic point of 
	$E_{\Lambda}$ for all $\Lambda$. By continuity it 
	follows that there is a neighborhood $U$ of $\Lambda_0$
	such that $A_{\Lambda}$ is an open set containing 
	$p_{\Lambda}$ for all $\Lambda \in U$. Because 
	$A_{\Lambda} \subseteq \Int(\Vhat_{\Lambda})$ for all 
	$\Lambda \in U$ it follows from Lemma~{\ref{lem: attracting/neutral fixed points}} that these $\Lambda$ 
	cannot lie in $\mathcal{V}_\infty$. This concludes
	the proof.
\end{proof}

\begin{remark}
A shorter proof of Lemma \ref{lemma:closed} can be given using Lemma \ref{lem: 0 on the boundary} below. To see this, suppose that $\Lambda_0 \notin \mathcal{V}_\infty$, in which case $V_{\Lambda_0} = \mathbb C$. Choose $g_{1,\Lambda}, g_{2,\Lambda}, g_{3,\Lambda} \in \mathcal{G}_{\Lambda}$ such that $0$ lies in the interior of the convex hull of the three points $g_{i,\Lambda_0}(0)$. Since each $g_{i,\Lambda}(0)$ varies holomorphically with $\Lambda$, it follows that the same holds for $\Lambda$ sufficiently close to $\Lambda_0$, which by Lemma \ref{lem: 0 on the boundary} implies Lemma \ref{lemma:closed}. In order to avoid a circular argument we have included the longer proof above.
\end{remark}

%As a consequence of Lemma \ref{lemma:closed} we therefore obtain the following:

\begin{corollary} \label{cor:thm B}
	Part~{(\ref{item: item (a) theorem B})} of Theorem \ref{thm: theorem B} holds, i.e.\
	there is a neighborhood of $\partial{C}_{\infty}\setminus \{-e^{-1},e\}$ that does not intersect $\mathcal{V}_\infty$.
\end{corollary}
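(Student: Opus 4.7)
The plan is to combine the two observations already made in the paragraph preceding Lemma~\ref{lemma:closed} with the closedness statement of Lemma~\ref{lemma:closed}. First I would record the identity
\[
    \mathcal{V}_\infty \cap \partial \mathcal{C}_\infty = \{-e^{-1}, e\}.
\]
The inclusion $\supseteq$ follows because $\{-e^{-1}, e\} \subseteq [-1/e, \infty) = \mathcal{V}_\infty \cap \mathbb{R}$. For $\subseteq$, note that for $\Lambda \in \partial \mathcal{C}_\infty \setminus \mathbb{R}$ the parametrization $\Lambda = -u e^{-u}$ with $|u|=1$ and $u \notin \{1,-1\}$ (cf.\ Remark~\ref{rem:cardioid}) gives a neutral fixed point of $E_\Lambda$ with multiplier $u \neq 1$. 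If $\Lambda$ were in $\mathcal{V}_\infty$, then taking $K = \widehat{V}_\Lambda$ in Lemma~\ref{lem: attracting/neutral fixed points} (using Lemma~\ref{lem: basic properties} to verify forward invariance, and Lemma~\ref{lem: Vhat is C} to ensure $K \neq \mathbb{C}$) would force $E_\Lambda$ to have an attracting fixed point or a neutral fixed point with multiplier exactly $1$, neither of which occurs on $\partial \mathcal{C}_\infty \setminus \mathbb{R}$.

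With the identity in hand, the corollary is immediate: Lemma~\ref{lemma:closed} guarantees that $\mathcal{V}_\infty$ is closed, so its complement $\mathbb{C} \setminus \mathcal{V}_\infty$ is open, and by the identity above it contains $\partial \mathcal{C}_\infty \setminus \{-e^{-1}, e\}$. This open set is the desired neighborhood. Since $\mathcal{U}_\infty \subseteq \mathcal{V}_\infty$ the same neighborhood avoids $\mathcal{U}_\infty$ as well, which yields Part~(\ref{item: item (a) theorem B}) of Theorem~\ref{thm: theorem B}. There is no real obstacle here; the conceptual content has been placed into Lemma~\ref{lem: attracting/neutral fixed points} and Lemma~\ref{lemma:closed}, and the corollary is essentially a bookkeeping step that assembles them.
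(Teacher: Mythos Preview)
Your proposal is correct and follows essentially the same approach as the paper: the paragraph preceding Lemma~\ref{lemma:closed} already establishes $\mathcal{V}_\infty \cap \partial \mathcal{C}_\infty = \{-e^{-1}, e\}$ via Lemma~\ref{lem: attracting/neutral fixed points}, and the corollary is then an immediate consequence of the closedness in Lemma~\ref{lemma:closed}. Your write-up simply makes this assembly explicit, with no substantive deviation.
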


\begin{lemma}
    \label{lem: multiplier fixed points boundary}
    Let $\Lambda \in \mathbb{C}$, $g\in G_\Lambda$ and $K$ a convex set 
    with non-empty interior such that $g(K) \subseteq K$. Then any 
    periodic point of $g$ on the boundary of $K$ must have a positive real 
    multiplier.
\end{lemma}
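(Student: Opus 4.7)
The plan is to show that the multiplier $\mu = (g^{\circ n})'(p)$ must map the tangent cone of $K$ at $p$ into itself, and then to use the fact that this cone lies inside a closed half-plane to conclude $\mu \ge 0$.

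I would set $f = g^{\circ n}$, so that $f(p) = p$ and $f(K) \subseteq K$, and write the Taylor expansion $f(z) = p + \mu(z - p) + O((z - p)^2)$. With
\[
    T_p K = \overline{\{v \in \mathbb{C} : p + sv \in K \text{ for some } s > 0\}}
\]
denoting the tangent cone (a closed convex cone), I would argue as follows: given $v$ and $t > 0$ with $p + tv \in K$, the relation $f(p + tv) = p + t(\mu v + O(t)) \in K$ puts $\mu v + O(t)$ in $T_p K$ for each such $t$, and letting $t \to 0^+$ (using closedness of $T_p K$) gives $\mu v \in T_p K$. Taking closures this yields $\mu \cdot T_p K \subseteq T_p K$.

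Next I would apply elementary convex geometry. Since $K$ has nonempty interior, picking $q \in \Int(K)$ gives a small open ball around $q - p$ inside $T_p K$ by convexity, so $T_p K$ has nonempty interior. Since $p \in \partial K$ and $K$ is convex, a supporting line at $p$ places $K$ in a closed half-plane $H$ through $p$, hence $T_p K \subseteq H - p$, a closed half-plane through the origin. Therefore $T_p K$ is a closed convex sector of opening angle $\theta_0 \in (0, \pi]$.

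To conclude: if $\mu = 0$ we are done. Otherwise write $\mu = r e^{i \phi}$ with $r > 0$; since $T_p K$ is invariant under positive scaling, the inclusion $\mu \cdot T_p K \subseteq T_p K$ reduces to $e^{i \phi} \cdot T_p K \subseteq T_p K$. Both sides are sectors of the same opening angle $\theta_0 \le \pi$, so the inclusion must be an equality, and a sector of opening angle at most $\pi$ is preserved by a rotation only when $\phi \equiv 0 \pmod{2 \pi}$. Thus $\mu$ is a non-negative real number, as claimed. The only mildly delicate step is the tangent-cone invariance argument; the rest is routine convex geometry, and I do not anticipate any serious obstacle.
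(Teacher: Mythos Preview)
Your argument is correct and rests on the same geometric idea as the paper's proof: a periodic point on the boundary of a convex invariant set has its local dynamics constrained by a supporting half-plane through that point. The paper's version is terser---it fixes a single segment $I=(p,q]\subset K$ and a supporting half-plane $H$, observes that $g^{kn}(I)\subset H$ for every $n$, and infers $(g^k)'(p)\ge 0$ from the fact that the tangent directions $\mu^n(q-p)$ must all lie in $H-p$. You instead show in one step that multiplication by $\mu$ preserves the full tangent cone $T_pK$, and then use that a sector of opening at most $\pi$ admits no nontrivial rotational self-inclusion; this avoids iteration at the cost of a little convex-geometry bookkeeping. One small point worth tightening: to let $t\to 0^+$ you need $p+tv\in K$ for \emph{arbitrarily small} $t$, which is guaranteed once you take $v=q-p$ with $q\in\Int(K)$ (so that $(p,q]\subset\Int(K)$ by the standard fact for convex bodies); since such $v$ are dense in $T_pK$, your conclusion $\mu\cdot T_pK\subseteq T_pK$ follows.
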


\begin{proof}
Let $p \in \partial K$ be a periodic point of $g$ of order $k$, let $I = (p,q]$ be an interval contained in $K$, and let $L$ be a $H$ be a closed half-plane containing $K$ with $p \in  \partial H$. By invariance $g^{k\cdot n}(I) \subset H$ for all $n \in \mathbb N$, which implies that $(g^k)\prime(p)\ge 0$.
\end{proof}

\begin{lemma}\label{lem: 0 on the boundary}
If $\Lambda \in \mathcal{V}_\infty$ then $0\in \partial \Vhat_\Lambda$.
\end{lemma}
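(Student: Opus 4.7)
My plan is to prove the contrapositive. Suppose $\Lambda \in \mathcal{V}_\infty$ and, for a contradiction, $0 \notin \partial\Vhat_\Lambda$. Since $0 \in V_\Lambda \subseteq \Vhat_\Lambda$, this forces $0 \in \Int(\Vhat_\Lambda)$, and I aim to deduce that $\Vhat_\Lambda$ is unbounded. The case $\Lambda \in \R$ is immediate from the opening paragraphs of this section, where $\Vhat_\Lambda$ is shown to be a real interval with $0$ as an endpoint, so I will assume $\Lambda \notin \R$.

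The first step is a reformulation. Lemma~\ref{lem: basic properties} gives $\Vhat_\Lambda = \Conv(\{0\} \cup E_\Lambda(\Vhat_\Lambda))$, and since $E_\Lambda$ is nowhere zero and $\Vhat_\Lambda$ is bounded, $E_\Lambda(\Vhat_\Lambda)$ is bounded away from $0$. Adding the single point $\{0\}$ to the convex hull does not affect whether $0$ lies in the interior (if $0 \notin \Conv(E_\Lambda(\Vhat_\Lambda))$, it becomes an extreme point of $\Vhat_\Lambda$, hence boundary), so $0 \in \Int(\Vhat_\Lambda)$ is equivalent to $0 \in \Int(\Conv(E_\Lambda(\Vhat_\Lambda)))$. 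Because $\arg(\Lambda e^{-w}) = \arg\Lambda - \Im w \pmod{2\pi}$ and $\Im \Vhat_\Lambda$ is a real interval by convexity, this in turn is equivalent to the length of the interval $\Im \Vhat_\Lambda$ strictly exceeding $\pi$.

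Assuming $\Vhat_\Lambda$ has imaginary extent $L > \pi$, I would choose $p_1, p_2 \in \Vhat_\Lambda$ realizing this extent and parametrize the segment $Z(t) = (1-t)p_1 + t p_2 \subseteq \Vhat_\Lambda$ for $t \in [0,1]$. The image curve $\gamma(t) = E_\Lambda(Z(t))$ lies in $\Vhat_\Lambda$, its argument sweeps through a range of length $L > \pi$, and its modulus is $|\Lambda| e^{-\Re Z(t)}$. Any arc in $\R/2\pi\Z$ of length exceeding $\pi$ contains at least one of each pair of antipodal cardinal angles, so one can locate $t^*$ with $\gamma(t^*)$ either having real part equal to $\pm|\Lambda| e^{-\Re Z(t^*)}$ or imaginary part of modulus $|\Lambda| e^{-\Re Z(t^*)}$. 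This produces a new extremum of the real or imaginary part of $\Vhat_\Lambda$ that depends exponentially on the existing real parts inside $\Vhat_\Lambda$.

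The main obstacle, which I expect will occupy most of the work, is turning this single improvement into a divergent iteration. The plan is to argue that in whichever case occurs at each step, either $\Vhat_\Lambda$ acquires a point whose real part is strictly more negative than before, or the imaginary extent strictly grows, so that feeding the new extremum back into $E_\Lambda$ and iterating produces a doubly exponential lower bound on $\diam(\Vhat_\Lambda)$, contradicting the boundedness of $V_\Lambda$. The delicate geometric bookkeeping --- verifying that the ``useful'' cardinal direction is always available and that each iteration strictly improves the relevant extremum regardless of the sign of $\Re \Lambda$ and the position of $\gamma$'s argument arc --- is where the technical heart of the proof should lie.
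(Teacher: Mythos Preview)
Your reformulation in the first two paragraphs is correct and useful: the equivalence between $0 \in \Int(\Vhat_\Lambda)$ and the imaginary extent of $\Vhat_\Lambda$ exceeding $\pi$ is a genuine observation. The problem is the third and fourth paragraphs, where the proof is supposed to happen. You write that the iteration will produce ``a new extremum of the real or imaginary part of $\Vhat_\Lambda$'', and that ``$\Vhat_\Lambda$ acquires a point whose real part is strictly more negative than before''. But $\Vhat_\Lambda$ is a \emph{fixed} bounded set; it does not acquire anything. What you actually obtain from your cardinal-direction argument is an inequality such as $-x_{\min} \ge |\Lambda| e^{-\Re Z^*}$ for some $Z^* \in \Vhat_\Lambda$, which only says $-x_{\min} \ge |\Lambda| e^{-x_{\max}}$. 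There is no mechanism in your sketch that forces these inequalities to be inconsistent, and you acknowledge as much by deferring the ``delicate geometric bookkeeping'' to the end. As written, this is an outline that stops before the actual difficulty.

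The paper's proof uses a completely different idea and avoids any iteration. One first locates the attracting fixed point $w$ of $E_\Lambda$ in $\Int(\Vhat_\Lambda)$ (this uses the earlier lemmas on fixed points in invariant convex sets). Take the Riemann map $h \colon \D \to \Int(\Vhat_\Lambda)$ with $h(0) = w$, and suppose for contradiction that $0 = h(q)$ with $|q| < 1$. Then $T = h\bigl(\overline{D_{|q|}(0)}\bigr)$ is convex --- this follows from the classical criterion that a univalent $f$ has convex image iff $\Re\bigl(1 + z f''(z)/f'(z)\bigr) > 0$ on $\D$, which is preserved under rescaling $z \mapsto qz$ --- and is forward invariant for $E_\Lambda$ by the Schwarz lemma applied to $h^{-1} \circ E_\Lambda \circ h$ (which fixes $0$). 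Thus $T$ is a convex $E_\Lambda$-invariant set containing $0$ and properly contained in $\Vhat_\Lambda$, contradicting Lemma~\ref{lem: basic properties}. The key ingredients you are missing are the Schwarz lemma and the convexity of sublevel disks under the Riemann map of a convex domain; these replace your hoped-for blowup entirely.
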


\begin{proof}
 As we have seen, the statement is true for real $\Lambda$, so we assume that $\Lambda$ is not real. In that case $\Lambda$ lies in $\mathcal{C}_\infty$. This means that $E_{\Lambda}$ has a unique attracting fixed point, which we will denote by $w$, and all other fixed points are repelling. Moreover the multiplier $E_{\Lambda}'(w)$ is not real. It follows from Lemmas~\ref{lem: attracting/neutral fixed points} and \ref{lem: multiplier fixed points boundary} that $w$ lies in the interior of $\Vhat_{\Lambda}$.
 
 By the Riemann mapping theorem, there exists a comformal bijection $h:\mathbb{D}\to  \mathrm{int} (\widehat{V}_\Lambda)$ with $h(0)=w$. For the sake of a contradiction let us assume that $0=h(q)$ for some $|q|<1$. Without loss of generality we may assume that $q>0$. Let $D_{q}(0)$ denote the open disk of radius $q$ centered around $0$.
We claim that the image  $T=h \left(\overline{D_{q}(0)}\right)$ 
is convex and $E_\Lambda$-invariant.

To see convexity, let us recall the classic result (see e.g.~\cite[$\S 2.5$]{Duren1983}) that a univalent image $f(\mathbb D)$ is convex if and only if for any $z\in \mathbb{D}$:
\begin{equation}
\label{eq: convexity}
    \Re \left(1+\frac{zf''(z)}{f'(z)}\right) > 0.
\end{equation}
This equation is thus satisfied for $f = h$, and therefore must also be satisfied for the function $f_{q}(z)=h(q\cdot z)$. It follows 
that $T$ is convex, being the closure of $f_{q}(\mathbb{D})$.

To see the forward invariance, write $H=h^{-1}\circ E_\Lambda \circ h$. Clearly $H$ maps the disk into itself and
 \[
    H(0)=h^{-1}(E_\Lambda(h(0)))=h^{-1}(E_\Lambda(w))=h^{-1}(w)=0.
 \]
Therefore the Schwarz Lemma gives
$$
H(D_{q}(0)) \subset D_{q}(0),
$$
and thus $E_\Lambda(T) \subset T$. The set $T$ is therefore a forward invariant convex proper subset of $\Vhat_\Lambda$ containing $0$. It follows from Lemma~\ref{lem: basic properties} that $T$ should contain $\Vhat_\Lambda$, which is clearly a contradiction.
%As $T$ is a forward invariant convex proper subset of $\Vhat_\Lambda$ and it contains $0$ and therefore also $\Lambda = E_\Lambda(0)$, we have obtained a contradiction with the fact that $\Vhat_\Lambda$ is the minimal set with these properties, see Lemma \ref{lem: basic properties}.
\end{proof}

In Figure \ref{fig:cardioid}, the gray region depicting the set $\mathcal{C}_\infty \setminus \mathcal{V}_\infty$ was computed by checking whether the condition $0 \in \partial \Vhat_\Lambda$ was violated for an approximation of the set $\Vhat_\Lambda$.

\begin{corollary}
    \label{cor: normal family}
	The family $\{\Lambda \mapsto g_{\Lambda}(0): g_{\Lambda} \in G_\Lambda\}$ is normal on $\mathcal{U}_\infty$.
\end{corollary}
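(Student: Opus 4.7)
The plan is to apply Montel's theorem. Each map $\Lambda \mapsto g_\Lambda(0)$ is entire in $\Lambda$, as one sees by induction on the recursive clauses in the definition of $G_\Lambda$ (the exponential of a finite $\mathbb{R}_{\geq 0}$-linear combination of entire functions is again entire). Hence normality on the open set $\mathcal{U}_\infty$ is equivalent to local uniform boundedness. By Lemma~\ref{lem: basic properties}(2) it suffices to exhibit, for each $\Lambda_0 \in \mathcal{U}_\infty$, an open neighborhood $U \ni \Lambda_0$ together with a bounded convex set $K \subseteq \mathbb{C}$ containing $0$ such that $E_\Lambda(K) \subseteq K$ for every $\Lambda \in U$; such a $K$ forces $\Vhat_\Lambda \subseteq K$ and in particular $|g_\Lambda(0)| \leq \sup_{z \in K}|z|$ uniformly on $U$.

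For the construction, exploit that $\mathcal{U}_\infty$ is open and star-convex from $0$ (Lemma~\ref{lem: star convex}) to pick $t > 1$ close to $1$ with $t\Lambda_0 \in \mathcal{U}_\infty$, and set $K_0 := \Vhat_{t\Lambda_0}$. By definition $K_0$ is bounded, convex, contains $0$, and satisfies $E_{t\Lambda_0}(K_0) \subseteq K_0$. The constant-factor identity $E_\Lambda(Z) = (\Lambda/(t\Lambda_0))\,E_{t\Lambda_0}(Z)$ then yields
\[
E_{\Lambda_0}(K_0) \;\subseteq\; \tfrac{1}{t}\,K_0 \;\subsetneq\; K_0,
\]
a quantitative margin of contraction governed by $1/t < 1$. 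I then take $K$ to be a small convex enlargement of $K_0$, for instance the Minkowski sum $K := K_0 + \eta\,\overline{\mathbb{D}}$ for a suitably small $\eta > 0$. Using $|e^{-\eta w}-1| \leq 2\eta$ for $|w| \leq 1$ and $\eta$ small, a direct estimate gives an inclusion of the form
\[
E_\Lambda(K) \;\subseteq\; \tfrac{\Lambda}{t\Lambda_0}\,K_0 \;+\; C\eta\,\overline{\mathbb{D}},
\]
with $C$ depending only on $K_0$ and $\Lambda_0$. Choosing $\Lambda$ close enough to $\Lambda_0$ and $\eta$ small compared with the margin $1 - 1/t$ keeps $E_\Lambda(K)$ inside $K$, completing the desired invariance.

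The main obstacle is geometric: by Lemma~\ref{lem: 0 on the boundary} one has $0 \in \partial K_0$, so the scaled copy $(1/t)K_0$ touches $\partial K_0$ at $0$, ruling out a naive uniform Minkowski buffer between $(1/t)K_0$ and $\partial K_0$; moreover, the inclusion $(\Lambda/(t\Lambda_0))\,K_0 \subseteq K_0$ for complex $\Lambda/(t\Lambda_0)$ near $1/t$ is nontrivial even though immediate for real values in $(0,1]$. What rescues the argument is that $E_\Lambda(K)$ is uniformly bounded away from $0$ on a neighborhood of $\Lambda_0$, since $|E_\Lambda(Z)| = |\Lambda|\,e^{-\operatorname{Re}(Z)}$ has a strictly positive lower bound on any bounded $K$. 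Hence the delicate behavior of $\partial K_0$ near the origin is irrelevant for checking the image inclusion, and away from $0$ the margin $1 - 1/t$ comfortably absorbs both the rotational perturbation of $\Lambda$ and the $C\eta$ error arising from the thickening. With the invariant $K$ in hand, Montel's theorem applied on $U$ yields normality, and since $\Lambda_0 \in \mathcal{U}_\infty$ was arbitrary, the family is normal on all of $\mathcal{U}_\infty$.
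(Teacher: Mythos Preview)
Your final paragraph contains a genuine gap. The claim that ``away from $0$ the margin $1-1/t$ comfortably absorbs both the rotational perturbation of $\Lambda$ and the $C\eta$ error'' is false in general, because $\partial K_0 = \partial\Vhat_{t\Lambda_0}$ typically contains radial line segments emanating from $0$ (this is precisely the phenomenon analyzed in Lemma~\ref{lem: convex}). A point $v$ on such a segment, even with $|v|$ bounded away from $0$, satisfies $(1/t)v \in \partial K_0$, so scaling by $1/t$ gives no distance buffer to $\partial K_0$ whatsoever. A concrete failure: take $\Lambda_0 = 2$, $t$ slightly larger than $1$, so $K_0 = \Vhat_{t\Lambda_0} = [0,t\Lambda_0]$ is a real interval. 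Then $i\eta \in K = K_0 + \eta\overline{\mathbb D}$, but $E_{\Lambda_0}(i\eta) = 2e^{-i\eta} \approx 2 - 2i\eta$ has imaginary part of size $2\eta > \eta$, so it lies outside $K$. Thus your proposed $K$ is not $E_{\Lambda_0}$-invariant, and the argument breaks before any perturbation of $\Lambda$ is even introduced. The same obstruction occurs for non-real $\Lambda_0$ along the radial boundary arcs of $\Vhat_{t\Lambda_0}$; producing a \emph{strictly} invariant convex region is exactly the delicate content of Lemma~\ref{lem: convex} and Lemma~\ref{lemma:positive strict invariant}, proved later in the paper and by quite different means.

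The paper's own proof sidesteps this entirely. Instead of local uniform boundedness, it shows that the family omits fixed values and then invokes Montel's theorem. Specifically, if some $\Vhat_\Lambda$ contained a point $ki$ with $k \geq 2\pi$, then the curve $t \mapsto \Lambda e^{-tki}$ for $t\in[0,1]$ would trace a full circle of radius $|\Lambda|$ about the origin, and its convex hull---a disk containing $0$ in its interior---would lie in $\Vhat_\Lambda$, contradicting Lemma~\ref{lem: 0 on the boundary}. Hence every function $\Lambda \mapsto g_\Lambda(0)$ omits (say) $2\pi i$ and $4\pi i$ on $\mathcal{U}_\infty$, and normality follows from Montel.
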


\begin{proof}
    For any $\Lambda \in \mathcal{U}_\infty$ the set $\Vhat_\Lambda$ cannot contain any point $z$ of the form $k\cdot i$ for $k \geq 2\pi$ because otherwise $\Vhat_\Lambda$ would contain the convex hull of the circle parameterized by $t \mapsto \Lambda \cdot e^{-t\cdot z}$ contradicting Lemma~\ref{lem: 0 on the boundary}. The family in question is thus normal by Montel's theorem.
\end{proof}

\begin{lemma}\label{lem:neutral fixed point exclusion}
    Let $\Lambda_0 \in \mathbb{C}\setminus \mathbb{R}$. Suppose there is a non-identity map $g \in G_{\Lambda_0}$ and a $g$-invariant convex set $K$ containing $0$ whose closure contains a neutral fixed point of $g$. Then $\Lambda_0 \not \in \mathcal{U}_\infty$.
\end{lemma}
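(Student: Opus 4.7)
The plan is to suppose for contradiction that $\Lambda_0 \in \mathcal{U}_\infty$ and derive a contradiction by combining the fixed-point classification of Lemma~\ref{lem: attracting/neutral fixed points} (applied with $K = \Vhat_{\Lambda_0}$) with the normality from Corollary~\ref{cor: normal family}. The argument proceeds in three stages: first reduce to the parabolic case $g'(p_0) = 1$; then locate $p_0$ inside $\overline{\Vhat_{\Lambda_0}}$; and finally derive a contradiction from the square-root bifurcation of the parabolic fixed point under perturbation of $\Lambda$.

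For the reduction, observe that $g$ is non-constant (a constant map has multiplier $0$), hence open, and so $g(\Int(K)) \subseteq \Int(K)$ whenever $K$ has non-empty interior. If $p_0 \in \partial K$, Lemma~\ref{lem: multiplier fixed points boundary} yields $g'(p_0) \in \mathbb{R}_{\geq 0}$, which combined with $|g'(p_0)|=1$ forces $g'(p_0)=1$. If instead $p_0 \in \Int(K)$, the Schwarz--Pick lemma applied via a Riemann uniformization $\Int(K) \cong \mathbb{D}$ makes $g|_{\Int(K)}$ a conformal automorphism of $\Int(K)$; since the image of $g$ avoids $0$, one then deduces $0 \in \partial K$ and $g(\partial K) \subseteq \partial K$, so the forward orbit of $0$ lies in $\partial K \cap \Vhat_{\Lambda_0}$. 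Its accumulation points (which include the image of $p_0$ under the conjugated rotation) place $p_0 \in \overline{\Vhat_{\Lambda_0}}$, and Lemma~\ref{lem: attracting/neutral fixed points} then rules out $|g'(p_0)|=1$ with $g'(p_0) \neq 1$. Hence $g'(p_0) = 1$ in both subcases.

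With $g'(p_0) = 1$, the point $p_0$ is parabolic and has a local attracting petal $P$ on which $g^{\circ n}$ converges to $p_0$. The forward orbit of $0$ lies in the bounded convex set $K \cap \Vhat_{\Lambda_0}$, and using the convexity of this set together with the geometry of the petal one can find an iterate $g^{\circ n}(0)$ entering $P$. Its further iterates converge to $p_0$, so $p_0 \in \overline{V_{\Lambda_0}} \subseteq \overline{\Vhat_{\Lambda_0}}$, and Lemma~\ref{lem: attracting/neutral fixed points} identifies $p_0$ with the unique attracting-or-parabolic fixed point $q_0$ of $g$ in $\overline{\Vhat_{\Lambda_0}}$.

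Finally, let $g_\Lambda$ be the holomorphic family obtained by substituting $\Lambda$ for $\Lambda_0$ in the recipe for $g$. Since $g_{\Lambda_0}'(p_0) = 1$, the equation $g_\Lambda(z)=z$ has a double root at $(\Lambda_0, p_0)$; after replacing $g$ by a suitable iterate if necessary to ensure a non-degenerate unfolding, the Weierstrass preparation theorem produces two simple roots $p_\Lambda^{\pm}$ with a square-root branch at $\Lambda_0$, whose multipliers approach $1$ along transverse directions. A small loop around $\Lambda_0$ therefore interchanges $p_\Lambda^{+}$ and $p_\Lambda^{-}$. On the other hand, for each $\Lambda \in \mathcal{U}_\infty$ close to $\Lambda_0$, Lemma~\ref{lem: attracting/neutral fixed points} provides a unique attracting-or-parabolic fixed point $q_\Lambda$ of $g_\Lambda$ in $\overline{\Vhat_\Lambda}$, and in the attracting regime $q_\Lambda = \lim_{n \to \infty} g_\Lambda^{\circ n}(0)$; by Corollary~\ref{cor: normal family} this limit is a single-valued holomorphic function of $\Lambda$ on a neighborhood of $\Lambda_0$, contradicting the monodromy. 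The main obstacle is making this final step rigorous: ensuring that $q_\Lambda$ actually coincides with one of the branches $p_\Lambda^{\pm}$ throughout a punctured neighborhood of $\Lambda_0$, and handling any degeneracy of the Weierstrass unfolding by passing to an appropriate iterate of $g$.
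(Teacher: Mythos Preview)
Your proposal and the paper share the crucial starting ingredient: normality of $\{\Lambda \mapsto g_\Lambda^{\circ n}(0)\}$ on $\mathcal{U}_\infty$ (Corollary~\ref{cor: normal family}), from which one extracts a holomorphic fixed-point function $\Lambda \mapsto q_\Lambda := \lim_n g_\Lambda^{\circ n}(0)$ on all of $\mathcal{U}_\infty$ (the limit exists on the small disk $|\Lambda| < 1/e$ and propagates by normality and connectedness). From this point, however, the paper takes a much shorter route than your monodromy argument. It simply looks at the holomorphic multiplier $\mu(\Lambda) := g_\Lambda'(q_\Lambda)$. By Lemma~\ref{lem: attracting/neutral fixed points} (applied with $\Vhat_\Lambda$), each $q_\Lambda$ is attracting or parabolic with multiplier exactly $1$, so $\mu(\mathcal{U}_\infty) \subseteq \mathbb{D} \cup \{1\}$; since $\mu$ is nonconstant (it is near $0$ for small $|\Lambda|$), the open mapping theorem forbids the isolated boundary value $1$, hence $q_\Lambda$ is attracting for every $\Lambda \in \mathcal{U}_\infty$. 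The contradiction is then immediate from the uniqueness clause of Lemma~\ref{lem: attracting/neutral fixed points}: $\overline{K}$ contains both the attracting fixed point $q_{\Lambda_0}$ and the assumed neutral one. Your stages~1 and~2 (reducing to multiplier $1$, locating $p_0$ inside $\overline{\Vhat_{\Lambda_0}}$) are simply not needed in this approach.

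As for your stage~3, the gap you flag is genuine and not easily repaired. A square-root branch requires both that $p_0$ be a \emph{simple} parabolic point ($g_{\Lambda_0}''(p_0) \neq 0$) and that the family be transversal ($\partial_\Lambda(g_\Lambda(z) - z)|_{(\Lambda_0,p_0)} \neq 0$). Neither follows from the hypotheses, and passing to an iterate does not help: iterating a parabolic germ preserves its parabolic multiplicity, and a degenerate one-parameter family such as $g_\Lambda(z) - z = (z - h(\Lambda))^2 \cdot u(\Lambda,z)$ with $h$ holomorphic and $u$ a unit exhibits no monodromy at all, so your contradiction would evaporate. The open-mapping-on-the-multiplier argument sidesteps all of this entirely.
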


\begin{proof}
    Recall that $g$ depends holomorphically on the parameter $\Lambda$. We consider the family of functions $\mathcal{F} = \{\Lambda \mapsto g_\Lambda^{n}(0): n \geq 1\}$. Corollary~\ref{cor: normal family} states that $\mathcal{F}$ is normal on $\mathcal{U}_\infty$. 
    For $|\Lambda| < 1/e$ the image of the unit disk $\mathbb{D}$ under the map $Z \mapsto E_{\Lambda}(Z)$ is relatively compact in $\mathbb{D}$. Because $\mathbb{D}$ is convex and contains $0$ the same is true for $Z \mapsto g_\Lambda(Z)$. Therefore the orbit $\{g_{\Lambda}^n(0)\}_{n \geq 1}$ converges to an attracting fixed point of $g_{\Lambda}$. Because $\mathcal{F}$ is normal on $\mathcal{U}_\infty$ and $\mathcal{U}_\infty$ is connected containing the disk of radius $1/e$ it follows that 
    the orbit $\{g_{\Lambda}^n(0)\}_{n \geq 1}$ must converge to a fixed point $p_{\Lambda}$ of $g_{\Lambda}$ on the whole of $\mathcal{U}_\infty$. Furthermore the map $\Lambda \mapsto p_{\Lambda}$ is holomorphic. 
    
    For every $\Lambda$ the point $p_{\Lambda}$ lies in the closure of $\Vhat_{\Lambda}$. It follows then from from Lemma~\ref{lem: attracting/neutral fixed points} that if $|g_{\Lambda}'(p_{\Lambda})| = 1$
    then $g_{\Lambda}'(p_{\Lambda})$ is exactly equal to $1$. But because
    $\Lambda \mapsto g_{\Lambda}'(p_{\Lambda})$ is holomorphic and thus, if it is 
    not constant, an open map, this cannot occur on $\mathcal{U}_\infty$. 
    Therefore $p_{\Lambda}$ is an attracting fixed point for all $\Lambda \in \mathcal{U}_\infty$.
    
    If $\Lambda_0$ were an element of $\mathcal{U}_\infty$ then $p_{\Lambda_0}$ would be an attracting fixed point of $g_{\Lambda_0}$ contained in $K$. Lemma~\ref{lem: attracting/neutral fixed points} states that $g_{\Lambda_0}$ cannot contain any other attracting or neutral fixed points in the closure of $K$, which contradicts the assumption of the lemma that the closure of $K$ contains a neutral fixed point of $g$.
\end{proof}

\section{Proof of Theorem~\ref{thm: theorem A}}

%In this section we will show that the set $\mathcal{U}_\infty$ as we have defined it does indeed have the properties ascribed to it by \nameref{thm: theorem A}. To prove part~{(\ref{item: item (a) theorem A})} of \nameref{thm: theorem A} we will construct convex sets $K$ that contain $0$ and are strictly invariant for $E_{\Lambda}$, i.e.\ the closure of $E_{\Lambda}(K)$ is contained in $K$ and does not intersect $\partial K$. We will use that this property is still satisfied by the same $K$ for small perturbations of both $\Lambda$ and the map $E_{\Lambda}$. That is, there is a neighborhood $U$ of $\Lambda$ such that $K$ is strictly invariant for the maps $z \mapsto \frac{\Lambda'}{(1+z/d)^d}$ as long as $d$ is large enough and $\Lambda' \in U$. This will allow us to use Lemma~\ref{lemma:invariance} to prove zero-freeness. We give different arguments for when $\Lambda$ is real and $\Lambda$ is not real.

\label{sec: The main proof}

In this section we will prove our first main result, which stated that for any closed $K_1 \subseteq \mathcal{U}_\infty$ and any open $K_2 \supseteq \overline{\mathcal{U}_\infty}$
there exists a $d_0$ such that $K_1 \subseteq d \cdot \mathcal{U}_d \subseteq K_2$ for $d \geq d_0$.

The strategy to prove that $K_1/d$ is eventually zero-free is to show that for any $\Lambda \in \mathcal{U}_\infty$ there exists a convex set $K_{\Lambda}$ that is strictly invariant for $E_{\Lambda}$, i.e. the closure of $E_{\Lambda}(K)$ is contained in the interior of $K$. This allows us to apply Lemma~\ref{lem: strictly forward invariant} to find that there is a neighborhood $X_{\Lambda}$ of $\Lambda$ such that $X_{\Lambda}/d$ is eventually zero-free. Because $K_1$ is compact it follows that $K_1/d$ is eventually zero-free.

The strategy to prove that $K_2/d$ eventually contains $\mathcal{U}_d$ is to show that for $\Lambda \not \in K_2$ we can find elements $g_{\Lambda} \in G_{\Lambda}$ such that $|g_{\Lambda}(0)|$ is very large. We can then use Lemma~\ref{lem: approximate ratios} to find a sequence of ratios $R_d$ of finite graphs with degree at most $d$ and root degree at most $\frac{d}{2}$ such that $|R_{d}(\Lambda/d)|$ is very large compared to the cardioid $\mathcal{C}_{\lfloor \frac{d}{2} \rfloor}$. Because zeros are dense outside the cardioid this will allow us to conclude, using Lemma~\ref{lem: implementation}, that a neighborhood of $\Lambda$ is eventually contained in the rescaled set of zeros $d \cdot \overline{\mathcal{Z}_d}$.

%The latter argument made more precise in Section~\ref{sec: the proof}.

\subsection{Strictly forward invariant regions for nonreal $\Lambda \in \mathcal{U}_\infty$} 

\begin{lemma}
\label{lem: convex}
Let $\Lambda \in \mathbb C \setminus \mathbb R$, and suppose that $t\cdot \Lambda \in \mathcal{V}_\infty$ for some $t > 1$. Then there exists a convex bounded set $K \subset \mathbb C$ with $0 \in K$, which is strictly invariant under the map $E_\Lambda$.
% all possible maps $E_{\Lambda, (s_1, \ldots , s_n)}$. More precisely: the union of all possible images $E_{\Lambda, (s_1, \ldots , s_n)}(K, \ldots , K)$ is a relatively compact subset of the interior of $K$.
\end{lemma}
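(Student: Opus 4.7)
My plan is to take $K := K_0 + \overline{B_\epsilon(0)}$ (closed Minkowski sum) where $K_0 := \overline{\widehat{V}_{t\Lambda}}$ and $\epsilon > 0$ will be chosen small. By hypothesis $K_0$ is compact, convex, contains $0$, and is $E_{t\Lambda}$-invariant by Lemma~\ref{lem: basic properties}; thus $K$ is convex and bounded, with $0 \in \mathrm{int}(K)$ since $B_\epsilon(0) \subseteq K$. Because $E_\Lambda = \tfrac{1}{t} E_{t\Lambda}$, we have
\[
E_\Lambda(K_0) = \tfrac{1}{t}\, E_{t\Lambda}(K_0) \subseteq \tfrac{1}{t} K_0,
\]
and moreover $E_\Lambda(K_0)$ stays bounded away from $0$, since $|E_\Lambda(W)| = |\Lambda|\,e^{-\Re W}$ attains a positive minimum on the compact set $K_0$.

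The crucial step is to show that $E_\Lambda(K_0)$ is in fact a compact subset of $\mathrm{int}(K_0)$, with some uniform separation $d_0 > 0$ from $\partial K_0$. The underlying idea is that $K_0$ is star-shaped from $0$, so for any nonzero $w \in K_0$ the point $w/t$ lies strictly inside the open segment $(0,w)$; combined with the fact that $E_\Lambda(K_0)$ stays uniformly away from $0$, a compactness argument then gives a positive lower bound $d_0$ on the distance to $\partial K_0$. The delicate point here is to rule out that the image could touch a flat boundary segment of $K_0$ emanating from $0$, which requires a closer analysis of the tangent cone of $\widehat{V}_{t\Lambda}$ at $0$ using the analyticity of $E_{t\Lambda}$ and the fact that the nonzero directions generated by $V_{t\Lambda}$ lie in the open tangent cone; this is the main obstacle I expect.

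Granted this separation, strict invariance of $K$ follows by a direct computation. For $Z = W + u \in K$ with $W \in K_0$ and $|u| \leq \epsilon$,
\[
E_\Lambda(Z) = \Lambda e^{-W}\,e^{-u} = E_\Lambda(W) + E_\Lambda(W)\bigl(e^{-u}-1\bigr),
\]
so $|E_\Lambda(Z) - E_\Lambda(W)| \leq M (e^\epsilon - 1)$ with $M := \sup_{K_0} |E_\Lambda| < \infty$. Since $E_\Lambda(W) \in E_\Lambda(K_0)$ sits at distance $\geq d_0$ from $\partial K_0$, and $K$ is the closed $\epsilon$-neighborhood of $K_0$, we have $\mathrm{dist}\bigl(E_\Lambda(W),\partial K\bigr) \geq d_0 + \epsilon$. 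Choosing $\epsilon$ small enough that $M(e^\epsilon - 1) < d_0 + \epsilon$---possible because the left side tends to $0$ while the right side tends to $d_0 > 0$---we conclude $E_\Lambda(Z) \in \mathrm{int}(K)$. Hence $\overline{E_\Lambda(K)} \subseteq \mathrm{int}(K)$, which is the required strict invariance.
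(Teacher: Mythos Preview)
Your plan identifies the right obstacle, but the ``crucial step'' you defer is in fact false as stated: in general $E_\Lambda(K_0)$ is \emph{not} contained in $\Int(K_0)$, so no positive $d_0$ exists. Concretely, since $0\in\partial K_0$ the boundary of $K_0$ near $0$ contains two radial segments along the extremal argument directions of $K_0$. Because $K_0$ is the closed convex hull of $V_{t\Lambda}$ and $V_{t\Lambda}\setminus\{0\}=E_{t\Lambda}(\Vhat_{t\Lambda})$, the extremal argument of $K_0$ equals $\arg(\Lambda)-\Im(m)$, where $m\in K_0$ attains the minimal imaginary part. But then $E_\Lambda(m)\in E_\Lambda(K_0)\subseteq\tfrac1t K_0\subseteq K_0$ has exactly this extremal argument, hence lies on $\partial K_0$. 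So $E_\Lambda(K_0)\cap\partial K_0\neq\emptyset$ and your Minkowski-thickening argument cannot start. Your suggested fix (``nonzero directions of $V_{t\Lambda}$ lie in the open tangent cone'') is precisely what fails here: the extremal directions of $\Vhat_{t\Lambda}$ are by definition realized by elements (or limits of elements) of $V_{t\Lambda}\setminus\{0\}$.

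The paper's proof confronts this head-on and takes a rather different route. It works instead with the rescaled set $\mathbf{V}=\tfrac{1}{t'}\,\Vhat_{t\Lambda}$ (with $t'=\tfrac{1+t}{2}$), so that the radial boundary arcs of $\mathbf{V}$ extend strictly into $\Vhat_{t\Lambda}$. If $E_\Lambda(p)=q$ lies on one of these radial arcs for some $p\in\mathbf{V}$, one argues that $p$ must itself lie on a radial arc, and then---this is the key dynamical step---that $p$ must be real, because otherwise the exponential spiral $r\mapsto E_\Lambda(rp)$ would cross the straight radial arc transversally, contradicting forward invariance of $\Vhat_{t\Lambda}$. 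The real case is then handled separately, and strict invariance is obtained not by thickening with a ball but by adjoining a thin sector near the offending radial arc. In short, the separation you hoped for is not there; one has to analyze and then surgically remove the tangential contact.
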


\begin{proof}
Let $t' = \frac{1+t}{2}$, and define
$$
{\bf{V}} := (1/t') \cdot \widehat{V}_{t\cdot \Lambda}.
$$
Observe that $\widehat{V}_{\Lambda} \subset {\bf V} \subset \widehat{V}_{t\cdot \Lambda}$.

We further remark that
\begin{equation}
E_\Lambda({\bf V}) \subset E_\Lambda (\widehat{V}_{t\cdot \Lambda}) \subset \frac{1}{t} \widehat{V}_{t\cdot \Lambda} = \frac{t'}{t} {\bf V}.
\end{equation}
It follows that $E_\Lambda({\bf V})$ can intersect ${\bf V}$ only at points lying on radial arcs contained in the boundary of $\widehat{V}_{t\cdot \Lambda}$. Note that there must be exactly two such radial arcs, see Figure~\ref{fig:example} for an example. Indeed, $\widehat{V}_{t\cdot \Lambda} \subset \mathbb C$ is bounded, hence its closure contains points $m$ and $M$ of minimal and maximal imaginary parts. Their images under $E_\Lambda$ must have maximal and minimal arguments respectively, and are nonzero. It follows that the radial arcs through $E_\Lambda(m)$ and $E_\Lambda(M)$ are contained in the boundary of $\widehat{V}_{t\cdot \Lambda}$. We will denote these two radial arcs by $I_1$ and $I_2$.

If $E_\Lambda({\bf V})$ does not intersect the boundary of ${\bf V}$ then choosing $K = \bf V$ completes the proof. We may therefore assume that there exists $p \in {\bf V}$ with $E_\Lambda(p) = q \in I_1$, say. Since $E_\Lambda$ is an open map, it must follow that $p \in I_1\cup I_2$ as well, for if $p$ had been an interior point of $\Vhat_{t \cdot \Lambda}$ the forward invariant set $\Vhat_{t \cdot \Lambda}$ would have been larger.

Now consider the image $E_\Lambda(r \cdot p)$ for  $1-\epsilon < r < 1+\epsilon$ and $\epsilon>0$ small. Assume first that the point $p$ is not contained in the real axis. In that case the curve obtained by varying $r$ is an exponential spiral, intersecting the real line through $q$ transversally. Since ${\bf V}$ is invariant and star-shaped from the origin we obtain a contradiction. Note that we still obtain a contradiction when $p \in I_1 \cap {\bf V}$ is the point of maximal modulus, since we can consider $r\cdot p\in \widehat{V}_{t\cdot \Lambda}$. It follows that $p$ must be real.

There are now two possibilities. The first possibility is that one interval (say $I_2$) is contained in the real line, and that the other interval $I_1$ is contained in $\Lambda \cdot \mathbb R_+ $. In that case $E_\Lambda(I_2)$ is strictly contained in $I_1 \subset \Lambda \cdot \mathbb R_+ $. The second possibility is that only the point $0$ is mapped into $I_1$. In both cases $E_\Lambda({\bf V})$ does not intersect $I_2$.

In both cases we can increase $\bf{V}$ by adding a tiny sector adjacent to $I_1$. To be more precise, let $r \cdot e^{i\alpha}$ be the point in $I_1 \cap {\bf V}$ with maximal modulus. We define $K$ by taking the convex hull of ${\bf V}$ and the point $r e^{i(\alpha + \epsilon)}$, for $\epsilon>0$ sufficiently small. As $\epsilon \rightarrow 0$, the set $K$ contains a neighborhood of $E_\Lambda(I_2) \cap I_1$ of minimal radius comparable to $\epsilon$. On the other hand, for small $\epsilon>0$ the image $E(K) \setminus K$ is contained in a disk centered at $\Lambda$ whose radius is of order $o(\epsilon)$, since we can consider the image of an arbitrarily small neighborhood of the origin. It follows that when $\epsilon$ is chosen sufficiently small, the set $E(K)$ is strictly contained in $K$, which completes the proof.
%
% First, notice that if $E_{\Lambda,(s_1, s_2)}(p_1, p_2) = q$ for points $p_1 \in I_1$ and $p_2 \in I_2$ and some $q \in I_1 \cup I_2$, then we can replace $p_1$ and $p_2$ with their average, which also lies in ${\bf V}$ by convexity. Since ${\bf V}$ is star shaped we may moreover assume that $E_\Lambda(p) = q$, for points $p \in I_1$ and $q \in I_1 \cup I_2$. Just as in the previous case, there are only two possibilities that do not lead to a contradiction. The first possibility occurs when $p = 0$, which can be dealt with as in the previous case. The second possibility occurs when $p \in \mathbb R \setminus \{0\}$. In this case the entire interval $I_1$ must be mapped into the real interval $I_2$. It necessarily follows that the image $E_\Lambda(I_2)$ does not intersect $I_1$, which implies that $E_\Lambda(I_2)$ lies in the interior of ${\bf V}$. It follows that by again adding a tiny sector adjacent to $I_1$ and taking the convex hull we obtain a strictly invariant domain $K$ as required.
\end{proof}

\begin{corollary}
    \label{cor: strict invariant nonreal}
    For $\Lambda \in \mathcal{U}_\infty \setminus \mathbb{R}$ there is 
    an bounded convex set $K_{\Lambda}$ containing $0$ that is 
    strictly invariant under the map $E_{\Lambda}$.
\end{corollary}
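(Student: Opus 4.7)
The statement should follow almost immediately from Lemma~\ref{lem: convex}; the only thing to verify is the hypothesis that some slightly rescaled parameter $t\cdot \Lambda$ (with $t>1$) still lies in $\mathcal{V}_\infty$.

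The plan is as follows. Fix $\Lambda \in \mathcal{U}_\infty \setminus \mathbb{R}$. By definition, $\mathcal{U}_\infty$ is the interior of $\mathcal{V}_\infty$, so there is an open ball $B_\delta(\Lambda) \subset \mathcal{U}_\infty \subseteq \mathcal{V}_\infty$ for some $\delta > 0$. Since $\Lambda \neq 0$, for any sufficiently small $\varepsilon > 0$ the point $(1+\varepsilon)\Lambda$ lies in $B_\delta(\Lambda)$, and in particular $(1+\varepsilon)\Lambda \in \mathcal{V}_\infty$.

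With $t := 1+\varepsilon > 1$, the hypothesis of Lemma~\ref{lem: convex} is met: $\Lambda \in \mathbb{C}\setminus \mathbb{R}$ and $t\cdot \Lambda \in \mathcal{V}_\infty$. The lemma then produces a bounded convex set $K_\Lambda \subset \mathbb{C}$ with $0 \in K_\Lambda$ that is strictly forward invariant under $E_\Lambda$, which is exactly the statement of the corollary.

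There is no real obstacle here; the only thing one needs to check is that $\mathcal{U}_\infty$, being defined as the interior of $\mathcal{V}_\infty$, is genuinely open and contained in $\mathcal{V}_\infty$, so that one can slide outward from $\Lambda$ along the ray from $0$ and still land in $\mathcal{V}_\infty$. (Note that star-convexity from $0$ only gives us $s\Lambda \in \mathcal{U}_\infty$ for $s \in [0,1]$, which is the wrong direction; it is openness, not star-convexity, that is used here.)
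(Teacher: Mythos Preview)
Your argument is correct and is exactly the intended derivation: the paper states this as an immediate corollary of Lemma~\ref{lem: convex} without further proof, and the one-line verification you give (use openness of $\mathcal{U}_\infty$ to find $t>1$ with $t\Lambda\in\mathcal{V}_\infty$, then apply the lemma) is precisely what is meant.
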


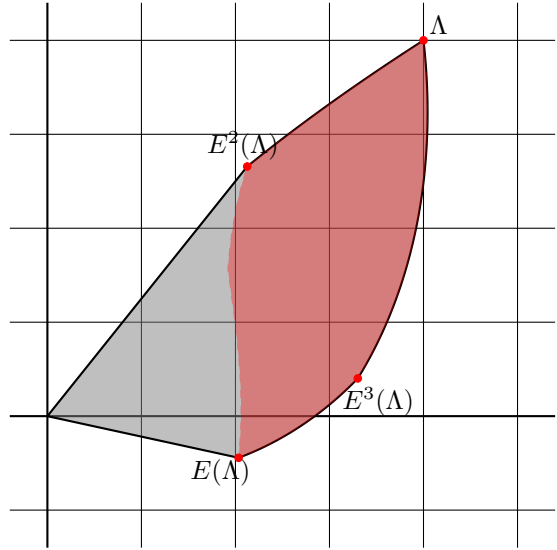
\begin{figure}[ht]

\centering

\begin{tikzpicture}

% Axes
\draw[thin] (-0.5,-1.25)--(6.75,-1.25);
\draw[thick] (-0.5,0.)--(6.75,0.);
\draw[thin] (-0.5,1.25)--(6.75,1.25);
\draw[thin] (-0.5,2.5)--(6.75,2.5);
\draw[thin] (-0.5,3.75)--(6.75,3.75);
\draw[thin] (-0.5,5.)--(6.75,5.);
\draw[thick] (0.,-1.75)--(0.,5.5);
\draw[thin] (1.25,-1.75)--(1.25,5.5);
\draw[thin] (2.5,-1.75)--(2.5,5.5);
\draw[thin] (3.75,-1.75)--(3.75,5.5);
\draw[thin] (5.,-1.75)--(5.,5.5);
\draw[thin] (6.25,-1.75)--(6.25,5.5);

% Points of boundary V^hat for \Lamba = 1 + i coordinate times 5.
\filldraw[thick, fill=gray, fill opacity = 0.5] (0.,0.)--(2.65654,3.3216)--(2.65706,3.32203)--(2.65759,3.32246)--(2.65865,3.32331)--(2.66076,3.32502)--(2.66498,3.32845)--(2.67345,3.33531)--(2.69046,3.34906)--(2.7277,3.37907)--(2.7629,3.4073)--(2.79781,3.43517)--(2.83615,3.46564)--(2.87236,3.49429)--(2.91209,3.52557)--(2.9516,3.55652)--(2.98889,3.58561)--(3.02982,3.61739)--(3.0685,3.64727)--(3.10687,3.67679)--(3.149,3.70904)--(3.18879,3.73936)--(3.23244,3.77248)--(3.27584,3.80523)--(3.3168,3.83601)--(3.36176,3.86963)--(3.40424,3.90124)--(3.45083,3.93575)--(3.49715,3.96989)--(3.54088,4.00198)--(3.58887,4.03702)--(3.63422,4.06997)--(3.6792,4.1025)--(3.72858,4.13805)--(3.77521,4.17147)--(3.82637,4.20795)--(3.87722,4.24403)--(3.92522,4.27792)--(3.9779,4.31494)--(4.02767,4.34975)--(4.07702,4.38411)--(4.13121,4.42165)--(4.18237,4.45693)--(4.23851,4.49545)--(4.29155,4.53167)--(4.34416,4.56742)--(4.40191,4.60648)--(4.45645,4.64319)--(4.51627,4.68325)--(4.57573,4.72287)--(4.63185,4.76009)--(4.69343,4.80072)--(4.75161,4.83892)--(4.8093,4.87662)--(4.87263,4.9178)--(4.93243,4.9565)--(4.93348,4.95718)--(4.93453,4.95785)--(4.93663,4.95921)--(4.94083,4.96192)--(4.94924,4.96735)--(4.96611,4.97821)--(4.96717,4.97889)--(4.96822,4.97957)--(4.97033,4.98093)--(4.97456,4.98365)--(4.98303,4.98909)--(4.98409,4.98978)--(4.98515,4.99046)--(4.98727,4.99182)--(4.99151,4.99454)--(4.99257,4.99523)--(4.99363,4.99591)--(4.99575,4.99727)--(4.99681,4.99795)--(4.99788,4.99864)--(4.99894,4.99932)--(5.,5.)--(5.0002,4.99817)--(5.00041,4.99633)--(5.00081,4.99267)--(5.00162,4.98534)--(5.00321,4.9707)--(5.00632,4.94146)--(5.01223,4.88319)--(5.02363,4.75775)--(5.02379,4.75593)--(5.02394,4.75411)--(5.02424,4.75047)--(5.02484,4.74319)--(5.02603,4.72865)--(5.02831,4.69962)--(5.03259,4.64176)--(5.03271,4.64)--(5.03284,4.63823)--(5.03308,4.63469)--(5.03357,4.62763)--(5.03453,4.6135)--(5.03638,4.58531)--(5.0365,4.58355)--(5.03661,4.58179)--(5.03683,4.57828)--(5.03727,4.57124)--(5.03814,4.55719)--(5.0398,4.52913)--(5.03991,4.52723)--(5.04002,4.52533)--(5.04023,4.52153)--(5.04066,4.51394)--(5.04149,4.49877)--(5.04308,4.46849)--(5.04317,4.4666)--(5.04327,4.46471)--(5.04345,4.46094)--(5.04383,4.45338)--(5.04455,4.43829)--(5.04591,4.40817)--(5.04599,4.40642)--(5.04606,4.40466)--(5.04621,4.40116)--(5.04651,4.39414)--(5.04708,4.38013)--(5.04816,4.35216)--(5.04823,4.35042)--(5.04829,4.34867)--(5.04842,4.34518)--(5.04867,4.33821)--(5.04915,4.32427)--(5.05004,4.29644)--(5.05009,4.29455)--(5.05015,4.29267)--(5.05026,4.28891)--(5.05048,4.28139)--(5.05089,4.26636)--(5.05094,4.26448)--(5.05099,4.2626)--(5.05109,4.25885)--(5.05128,4.25135)--(5.05165,4.23636)--(5.05169,4.23449)--(5.05173,4.23262)--(5.05182,4.22887)--(5.05198,4.22139)--(5.05229,4.20644)--(5.05232,4.20458)--(5.05236,4.20271)--(5.05243,4.19898)--(5.05257,4.19152)--(5.05282,4.17661)--(5.05285,4.17478)--(5.05288,4.17296)--(5.05294,4.1693)--(5.05305,4.162)--(5.05307,4.16017)--(5.0531,4.15835)--(5.05315,4.1547)--(5.05324,4.1474)--(5.05327,4.14558)--(5.05329,4.14376)--(5.05333,4.14011)--(5.05341,4.13283)--(5.05343,4.13101)--(5.05345,4.12919)--(5.05349,4.12555)--(5.05356,4.11828)--(5.05358,4.11646)--(5.05359,4.11464)--(5.05362,4.11101)--(5.05364,4.10919)--(5.05365,4.10737)--(5.05368,4.10374)--(5.05369,4.10193)--(5.05371,4.10011)--(5.05373,4.09648)--(5.05378,4.08923)--(5.05379,4.08742)--(5.0538,4.0856)--(5.05381,4.08198)--(5.05382,4.08017)--(5.05383,4.07836)--(5.05385,4.07474)--(5.05385,4.07293)--(5.05386,4.07112)--(5.05387,4.06931)--(5.05387,4.0675)--(5.05388,4.06569)--(5.05388,4.06388)--(5.05389,4.06207)--(5.05389,4.06026)--(5.05389,4.05858)--(5.0539,4.05689)--(5.0539,4.0552)--(5.0539,4.05352)--(5.05391,4.05183)--(5.05391,4.05015)--(5.05391,4.04846)--(5.05391,4.04678)--(5.05391,4.0451)--(5.05391,4.04341)--(5.05391,4.04173)--(5.05391,4.04004)--(5.05391,4.03836)--(5.05391,4.03668)--(5.05391,4.035)--(5.05391,4.03331)--(5.0539,4.03163)--(5.0539,4.02995)--(5.0539,4.02827)--(5.0539,4.02659)--(5.05389,4.02491)--(5.05389,4.02323)--(5.05389,4.02155)--(5.05388,4.01987)--(5.05388,4.01819)--(5.05387,4.01651)--(5.05386,4.01315)--(5.05385,4.01147)--(5.05385,4.00979)--(5.05383,4.00644)--(5.05383,4.00476)--(5.05382,4.00308)--(5.0538,3.99973)--(5.05379,3.99805)--(5.05379,3.99638)--(5.05377,3.99302)--(5.05376,3.99135)--(5.05375,3.98967)--(5.05372,3.98632)--(5.05368,3.97963)--(5.05366,3.97796)--(5.05365,3.97628)--(5.05362,3.97294)--(5.05356,3.96625)--(5.05355,3.96458)--(5.05353,3.96291)--(5.0535,3.95957)--(5.05343,3.95289)--(5.05341,3.95108)--(5.05339,3.94927)--(5.05335,3.94566)--(5.05326,3.93842)--(5.05324,3.93662)--(5.05322,3.93481)--(5.05317,3.9312)--(5.05307,3.92398)--(5.05304,3.92217)--(5.05301,3.92037)--(5.05296,3.91676)--(5.05285,3.90955)--(5.0526,3.89515)--(5.05257,3.89335)--(5.05254,3.89155)--(5.05247,3.88795)--(5.05233,3.88076)--(5.05204,3.8664)--(5.05138,3.83774)--(5.05133,3.83607)--(5.05129,3.8344)--(5.05121,3.83106)--(5.05103,3.82438)--(5.05067,3.81105)--(5.04987,3.78444)--(5.04982,3.78278)--(5.04977,3.78112)--(5.04966,3.7778)--(5.04944,3.77116)--(5.04899,3.7579)--(5.04803,3.73143)--(5.04797,3.72981)--(5.04791,3.72819)--(5.04778,3.72496)--(5.04753,3.71849)--(5.047,3.70556)--(5.0459,3.67975)--(5.04582,3.67814)--(5.04575,3.67653)--(5.04561,3.67331)--(5.04531,3.66687)--(5.04471,3.65402)--(5.04344,3.62835)--(5.04335,3.62661)--(5.04326,3.62488)--(5.04308,3.6214)--(5.04272,3.61446)--(5.04197,3.60058)--(5.04042,3.5729)--(5.03703,3.51778)--(5.03692,3.51618)--(5.03682,3.51458)--(5.03661,3.51137)--(5.03618,3.50497)--(5.03532,3.49218)--(5.03353,3.46665)--(5.02972,3.41581)--(5.02041,3.30659)--(5.00988,3.2007)--(4.99885,3.10314)--(4.9856,2.99864)--(4.97206,2.90233)--(4.95614,2.79931)--(4.93928,2.69953)--(4.92247,2.60766)--(4.90312,2.50938)--(4.884,2.41887)--(4.86431,2.33131)--(4.84191,2.23764)--(4.82007,2.15145)--(4.79539,2.05938)--(4.77018,1.97034)--(4.74582,1.88851)--(4.71849,1.80108)--(4.69216,1.7207)--(4.66561,1.64306)--(4.63598,1.5601)--(4.60761,1.48391)--(4.57608,1.40263)--(4.54592,1.32796)--(4.51573,1.25591)--(4.48228,1.17903)--(4.45045,1.10849)--(4.41529,1.03335)--(4.38014,0.960886)--(4.34681,0.894461)--(4.31008,0.823704)--(4.27529,0.758831)--(4.24071,0.696339)--(4.2027,0.62977)--(4.16678,0.568803)--(4.16615,0.567749)--(4.16552,0.566696)--(4.16426,0.56459)--(4.16173,0.560383)--(4.15668,0.551985)--(4.14655,0.535254)--(4.14592,0.534211)--(4.14528,0.533169)--(4.14401,0.531085)--(4.14147,0.52692)--(4.13639,0.518608)--(4.13575,0.517571)--(4.13512,0.516534)--(4.13384,0.51446)--(4.1313,0.510317)--(4.13066,0.509283)--(4.13002,0.508248)--(4.12875,0.50618)--(4.12811,0.505147)--(4.12747,0.504113)--(4.12684,0.503081)--(4.1262,0.502048)--(4.12581,0.501642)--(4.12541,0.501237)--(4.12463,0.500425)--(4.12306,0.498803)--(4.11992,0.495559)--(4.11362,0.489077)--(4.10097,0.476134)--(4.07331,0.448168)--(4.04718,0.422182)--(4.02129,0.396831)--(3.9929,0.369476)--(3.96612,0.344091)--(3.93679,0.316743)--(3.90768,0.290066)--(3.88026,0.265342)--(3.85021,0.238714)--(3.82189,0.214032)--(3.79386,0.190007)--(3.76317,0.164139)--(3.73426,0.140195)--(3.70264,0.114458)--(3.6713,0.0894189)--(3.64183,0.0662767)--(3.60959,0.041412)--(3.57924,0.0184312)--(3.54608,-0.00621503)--(3.51326,-0.0301472)--(3.4824,-0.0522302)--(3.44869,-0.0759002)--(3.41699,-0.0977378)--(3.38568,-0.118895)--(3.35149,-0.141558)--(3.31936,-0.162431)--(3.28431,-0.184744)--(3.24967,-0.20633)--(3.21717,-0.226173)--(3.18171,-0.247367)--(3.14841,-0.266841)--(3.1156,-0.285635)--(3.07981,-0.30569)--(3.04624,-0.324079)--(3.00968,-0.343655)--(2.97537,-0.361593)--(2.9416,-0.378854)--(2.90481,-0.397206)--(2.87033,-0.413981)--(2.83283,-0.431769)--(2.79587,-0.448833)--(2.76128,-0.464388)--(2.72367,-0.480854)--(2.68845,-0.495845)--(2.65384,-0.510181)--(2.61619,-0.525327)--(2.581,-0.539072)--(2.58038,-0.539308)--(2.57977,-0.539544)--(2.57854,-0.540017)--(2.57608,-0.54096)--(2.57116,-0.54284)--(2.56132,-0.546579)--(2.56071,-0.546812)--(2.56009,-0.547045)--(2.55886,-0.54751)--(2.5564,-0.548438)--(2.55148,-0.550289)--(2.55086,-0.55052)--(2.55025,-0.550751)--(2.54902,-0.551212)--(2.54655,-0.552133)--(2.54594,-0.552363)--(2.54532,-0.552592)--(2.54409,-0.553052)--(2.54348,-0.553281)--(2.54286,-0.55351)--(2.54225,-0.55374)--(2.54163,-0.553969)--(2.54163,-0.553969)--(0.,0.);

%image under map
\filldraw[red, dashed, opacity = 0.35] (5.,5.)--(5.00644,4.94036)--(5.01244,4.88099)--(5.01802,4.82189)--(5.02319,4.76308)--(5.02793,4.70454)--(5.03227,4.64628)--(5.03619,4.58831)--(5.03971,4.53061)--(5.04284,4.47321)--(5.04556,4.41608)--(5.0479,4.35924)--(5.04984,4.30269)--(5.05141,4.24643)--(5.05259,4.19045)--(5.05339,4.13477)--(5.05383,4.07938)--(5.05389,4.02427)--(5.05359,3.96947)--(5.05293,3.91495)--(5.05192,3.86073)--(5.05055,3.8068)--(5.04883,3.75317)--(5.04676,3.69984)--(5.04436,3.64681)--(5.04162,3.59407)--(5.03854,3.54163)--(5.03513,3.48949)--(5.0314,3.43765)--(5.02734,3.38611)--(5.02297,3.33488)--(5.01828,3.28394)--(5.01328,3.2333)--(5.00797,3.18297)--(5.00235,3.13294)--(4.99644,3.08322)--(4.99023,3.0338)--(4.98372,2.98468)--(4.97693,2.93587)--(4.96984,2.88736)--(4.96248,2.83915)--(4.95484,2.79125)--(4.94692,2.74366)--(4.93872,2.69637)--(4.93026,2.64939)--(4.92154,2.60272)--(4.91255,2.55634)--(4.9033,2.51028)--(4.8938,2.46452)--(4.88404,2.41907)--(4.87404,2.37393)--(4.86379,2.32909)--(4.8533,2.28455)--(4.84258,2.24033)--(4.83161,2.19641)--(4.82042,2.15279)--(4.80899,2.10949)--(4.79735,2.06648)--(4.78547,2.02379)--(4.77338,1.9814)--(4.76108,1.93931)--(4.74856,1.89753)--(4.73583,1.85606)--(4.7229,1.81489)--(4.70976,1.77402)--(4.69642,1.73346)--(4.68289,1.6932)--(4.66916,1.65325)--(4.65524,1.6136)--(4.64113,1.57425)--(4.62684,1.53521)--(4.61236,1.49646)--(4.59771,1.45802)--(4.58288,1.41988)--(4.56788,1.38205)--(4.55271,1.34451)--(4.53737,1.30727)--(4.52186,1.27033)--(4.50619,1.23369)--(4.49037,1.19735)--(4.47439,1.16131)--(4.45825,1.12556)--(4.44197,1.09011)--(4.42554,1.05496)--(4.40896,1.0201)--(4.39224,0.98554)--(4.37538,0.951272)--(4.35839,0.917298)--(4.34126,0.883618)--(4.324,0.850229)--(4.30661,0.817132)--(4.28909,0.784327)--(4.27145,0.751811)--(4.25369,0.719585)--(4.23582,0.687648)--(4.21782,0.655999)--(4.19971,0.624637)--(4.18149,0.593562)--(4.16317,0.562773)--(4.14473,0.532268)--(4.1262,0.502048)--(4.12581,0.501642)--(4.12541,0.501237)--(4.12463,0.500425)--(4.12306,0.498803)--(4.11992,0.495559)--(4.11362,0.489077)--(4.10097,0.476134)--(4.07331,0.448168)--(4.04718,0.422182)--(4.02129,0.396831)--(3.9929,0.369476)--(3.96612,0.344091)--(3.93679,0.316743)--(3.90768,0.290066)--(3.88026,0.265342)--(3.85021,0.238714)--(3.82189,0.214032)--(3.79386,0.190007)--(3.76317,0.164139)--(3.73426,0.140195)--(3.70264,0.114458)--(3.6713,0.0894189)--(3.64183,0.0662767)--(3.60959,0.041412)--(3.57924,0.0184312)--(3.54608,-0.00621503)--(3.51326,-0.0301472)--(3.4824,-0.0522302)--(3.44869,-0.0759002)--(3.41699,-0.0977378)--(3.38568,-0.118895)--(3.35149,-0.141558)--(3.31936,-0.162431)--(3.28431,-0.184744)--(3.24967,-0.20633)--(3.21717,-0.226173)--(3.18171,-0.247367)--(3.14841,-0.266841)--(3.1156,-0.285635)--(3.07981,-0.30569)--(3.04624,-0.324079)--(3.00968,-0.343655)--(2.97537,-0.361593)--(2.9416,-0.378854)--(2.90481,-0.397206)--(2.87033,-0.413981)--(2.83283,-0.431769)--(2.79587,-0.448833)--(2.76128,-0.464388)--(2.72367,-0.480854)--(2.68845,-0.495845)--(2.65384,-0.510181)--(2.61619,-0.525327)--(2.581,-0.539072)--(2.58038,-0.539308)--(2.57977,-0.539544)--(2.57854,-0.540017)--(2.57608,-0.54096)--(2.57116,-0.54284)--(2.56132,-0.546579)--(2.56071,-0.546812)--(2.56009,-0.547045)--(2.55886,-0.54751)--(2.5564,-0.548438)--(2.55148,-0.550289)--(2.55086,-0.55052)--(2.55025,-0.550751)--(2.54902,-0.551212)--(2.54655,-0.552133)--(2.54594,-0.552363)--(2.54532,-0.552592)--(2.54409,-0.553052)--(2.54348,-0.553281)--(2.54286,-0.55351)--(2.54225,-0.55374)--(2.54163,-0.553969)--(2.54173,-0.553014)--(2.54183,-0.55206)--(2.54203,-0.550152)--(2.54242,-0.546338)--(2.5432,-0.538718)--(2.54472,-0.523512)--(2.54764,-0.493236)--(2.55338,-0.428195)--(2.55346,-0.427252)--(2.55353,-0.42631)--(2.55369,-0.424425)--(2.554,-0.420657)--(2.55461,-0.41313)--(2.55579,-0.398108)--(2.55804,-0.368195)--(2.55811,-0.367281)--(2.55817,-0.366367)--(2.5583,-0.364541)--(2.55857,-0.360889)--(2.55908,-0.353594)--(2.56009,-0.339034)--(2.56015,-0.338126)--(2.56021,-0.337217)--(2.56033,-0.335401)--(2.56058,-0.331769)--(2.56106,-0.324515)--(2.56199,-0.310036)--(2.56205,-0.309055)--(2.56211,-0.308075)--(2.56223,-0.306116)--(2.56247,-0.302199)--(2.56295,-0.294374)--(2.56388,-0.27876)--(2.56393,-0.277785)--(2.56399,-0.276811)--(2.5641,-0.274863)--(2.56432,-0.27097)--(2.56476,-0.263191)--(2.5656,-0.247669)--(2.56565,-0.246765)--(2.56569,-0.245861)--(2.56579,-0.244053)--(2.56598,-0.240441)--(2.56634,-0.233223)--(2.56706,-0.218817)--(2.5671,-0.217917)--(2.56714,-0.217019)--(2.56723,-0.215221)--(2.5674,-0.211628)--(2.56773,-0.204449)--(2.56837,-0.190121)--(2.56841,-0.189152)--(2.56846,-0.188183)--(2.56854,-0.186245)--(2.5687,-0.182373)--(2.56903,-0.174636)--(2.56907,-0.17367)--(2.56911,-0.172704)--(2.56919,-0.170772)--(2.56934,-0.166911)--(2.56964,-0.159196)--(2.56968,-0.158233)--(2.56972,-0.157269)--(2.56979,-0.155343)--(2.56994,-0.151493)--(2.57022,-0.143801)--(2.57025,-0.14284)--(2.57029,-0.14188)--(2.57036,-0.139959)--(2.57049,-0.13612)--(2.57076,-0.12845)--(2.57079,-0.127509)--(2.57082,-0.126569)--(2.57088,-0.124689)--(2.57101,-0.12093)--(2.57104,-0.119991)--(2.57107,-0.119052)--(2.57113,-0.117174)--(2.57125,-0.113421)--(2.57128,-0.112483)--(2.57131,-0.111545)--(2.57136,-0.10967)--(2.57148,-0.105922)--(2.57151,-0.104986)--(2.57153,-0.104049)--(2.57159,-0.102177)--(2.5717,-0.098434)--(2.57173,-0.0974986)--(2.57175,-0.0965635)--(2.57181,-0.0946937)--(2.57183,-0.0937591)--(2.57186,-0.0928246)--(2.57191,-0.0909561)--(2.57194,-0.0900221)--(2.57197,-0.0890882)--(2.57202,-0.087221)--(2.57212,-0.0834885)--(2.57214,-0.0825558)--(2.57217,-0.0816232)--(2.57222,-0.0797586)--(2.57224,-0.0788265)--(2.57227,-0.0778946)--(2.57231,-0.0760312)--(2.57234,-0.0750998)--(2.57236,-0.0741685)--(2.57238,-0.0732374)--(2.57241,-0.0723064)--(2.57243,-0.0713756)--(2.57245,-0.070445)--(2.57248,-0.0695145)--(2.5725,-0.0685842)--(2.57252,-0.0677166)--(2.57254,-0.0668491)--(2.57256,-0.0659817)--(2.57258,-0.0651144)--(2.5726,-0.0642474)--(2.57262,-0.0633804)--(2.57265,-0.0625136)--(2.57267,-0.0616469)--(2.57269,-0.0607804)--(2.57271,-0.059914)--(2.57273,-0.0590477)--(2.57275,-0.0581816)--(2.57277,-0.0573156)--(2.57279,-0.0564497)--(2.5728,-0.055584)--(2.57282,-0.0547184)--(2.57284,-0.053853)--(2.57286,-0.0529877)--(2.57288,-0.0521225)--(2.5729,-0.0512575)--(2.57292,-0.0503926)--(2.57294,-0.0495279)--(2.57296,-0.0486633)--(2.57297,-0.0477988)--(2.57299,-0.0469345)--(2.57301,-0.0460703)--(2.57305,-0.0443423)--(2.57307,-0.0434785)--(2.57308,-0.0426148)--(2.57312,-0.0408879)--(2.57314,-0.0400247)--(2.57315,-0.0391616)--(2.57319,-0.0374358)--(2.5732,-0.0365731)--(2.57322,-0.0357105)--(2.57325,-0.0339858)--(2.57327,-0.0331237)--(2.57329,-0.0322616)--(2.57332,-0.030538)--(2.57338,-0.0270924)--(2.5734,-0.0262313)--(2.57341,-0.0253704)--(2.57344,-0.023649)--(2.5735,-0.0202077)--(2.57352,-0.0193477)--(2.57353,-0.0184879)--(2.57356,-0.0167686)--(2.57362,-0.0133316)--(2.57363,-0.0124003)--(2.57365,-0.0114691)--(2.57367,-0.00960718)--(2.57373,-0.00588526)--(2.57374,-0.00495517)--(2.57376,-0.00402524)--(2.57379,-0.00216586)--(2.57384,0.00155103)--(2.57385,0.00247986)--(2.57386,0.00340853)--(2.57389,0.0052654)--(2.57393,0.00897726)--(2.57402,0.0163935)--(2.57403,0.0173198)--(2.57405,0.018246)--(2.57407,0.0200979)--(2.57411,0.0237997)--(2.57418,0.0311961)--(2.5743,0.0459591)--(2.5743,0.0468192)--(2.57431,0.0476793)--(2.57432,0.0493989)--(2.57434,0.0528367)--(2.57438,0.0597057)--(2.57444,0.0734185)--(2.57444,0.0742744)--(2.57444,0.0751302)--(2.57444,0.0768414)--(2.57445,0.0802622)--(2.57446,0.0870976)--(2.57446,0.100743)--(2.57446,0.101578)--(2.57446,0.102413)--(2.57446,0.104082)--(2.57445,0.10742)--(2.57443,0.114088)--(2.57438,0.127402)--(2.57438,0.128233)--(2.57437,0.129064)--(2.57436,0.130725)--(2.57435,0.134047)--(2.5743,0.140684)--(2.5742,0.153935)--(2.57419,0.154832)--(2.57418,0.155729)--(2.57417,0.157524)--(2.57413,0.16111)--(2.57406,0.168277)--(2.57389,0.182583)--(2.57347,0.211089)--(2.57345,0.211918)--(2.57344,0.212747)--(2.57341,0.214404)--(2.57335,0.217718)--(2.57323,0.224339)--(2.57297,0.23756)--(2.57238,0.26391)--(2.57079,0.32063)--(2.56883,0.375788)--(2.56666,0.426776)--(2.56396,0.481598)--(2.56114,0.532334)--(2.55776,0.586852)--(2.55414,0.63993)--(2.55051,0.689061)--(2.54631,0.741922)--(2.54217,0.790909)--(2.5379,0.838599)--(2.53307,0.889975)--(2.52837,0.937593)--(2.5231,0.988858)--(2.51776,1.03885)--(2.51263,1.08519)--(2.50693,1.13515)--(2.5015,1.18151)--(2.49607,1.2267)--(2.49009,1.27548)--(2.48442,1.32075)--(2.47821,1.36957)--(2.47234,1.41493)--(2.46655,1.4592)--(2.46022,1.50699)--(2.45429,1.5514)--(2.44785,1.59931)--(2.44152,1.64615)--(2.43562,1.68967)--(2.42925,1.73667)--(2.42332,1.78039)--(2.41754,1.8231)--(2.41131,1.86928)--(2.40556,1.91223)--(2.40546,1.91298)--(2.40536,1.91373)--(2.40516,1.91523)--(2.40476,1.91822)--(2.40396,1.9242)--(2.40237,1.93615)--(2.40227,1.9369)--(2.40218,1.93764)--(2.40198,1.93914)--(2.40158,1.94212)--(2.40079,1.94809)--(2.40069,1.94884)--(2.40059,1.94958)--(2.40039,1.95108)--(2.4,1.95406)--(2.3999,1.9548)--(2.3998,1.95555)--(2.3996,1.95704)--(2.39951,1.95779)--(2.39941,1.95853)--(2.39931,1.95928)--(2.39921,1.96002)--(2.39924,1.96037)--(2.39927,1.96072)--(2.39933,1.96142)--(2.39944,1.96281)--(2.39968,1.9656)--(2.40015,1.97119)--(2.4011,1.98241)--(2.40324,2.00688)--(2.40531,2.02992)--(2.40742,2.05269)--(2.4098,2.07759)--(2.4121,2.10103)--(2.4147,2.12663)--(2.41735,2.15197)--(2.41992,2.1758)--(2.42281,2.20185)--(2.42561,2.22636)--(2.42846,2.25057)--(2.43167,2.27704)--(2.43477,2.30192)--(2.43826,2.32911)--(2.44182,2.35601)--(2.44526,2.38128)--(2.44913,2.4089)--(2.45288,2.43486)--(2.45709,2.4632)--(2.46137,2.49124)--(2.46552,2.51757)--(2.47017,2.54632)--(2.47467,2.57336)--(2.47923,2.60003)--(2.48435,2.62916)--(2.4893,2.65652)--(2.49484,2.68637)--(2.50049,2.71587)--(2.50593,2.74355)--(2.51203,2.77375)--(2.51792,2.80211)--(2.52387,2.83006)--(2.53055,2.86057)--(2.53699,2.88918)--(2.5442,2.92037)--(2.55115,2.94965)--(2.55817,2.97849)--(2.56603,3.00993)--(2.57361,3.03941)--(2.58207,3.0715)--(2.59065,3.10314)--(2.5989,3.13278)--(2.60812,3.16505)--(2.61699,3.19528)--(2.62593,3.22502)--(2.63592,3.25739)--(2.64551,3.28768)--(2.64568,3.28821)--(2.64585,3.28874)--(2.64619,3.2898)--(2.64687,3.29192)--(2.64824,3.29615)--(2.65098,3.30463)--(2.65115,3.30516)--(2.65133,3.30569)--(2.65167,3.30675)--(2.65236,3.30887)--(2.65375,3.31311)--(2.65392,3.31364)--(2.6541,3.31417)--(2.65444,3.31523)--(2.65514,3.31736)--(2.65531,3.31789)--(2.65549,3.31842)--(2.65584,3.31948)--(2.65601,3.32001)--(2.65619,3.32054)--(2.65636,3.32107)--(2.65654,3.3216)--(2.65654,3.3216)--(2.67377,3.33557)--(2.69111,3.34959)--(2.70855,3.36366)--(2.7261,3.37778)--(2.74375,3.39196)--(2.76151,3.40619)--(2.77938,3.42047)--(2.79735,3.4348)--(2.81543,3.44919)--(2.83361,3.46363)--(2.85191,3.47812)--(2.87032,3.49267)--(2.88883,3.50727)--(2.90746,3.52193)--(2.92619,3.53663)--(2.94504,3.5514)--(2.964,3.56621)--(2.98308,3.58109)--(3.00227,3.59601)--(3.02157,3.61099)--(3.04099,3.62603)--(3.06052,3.64112)--(3.08017,3.65626)--(3.09994,3.67146)--(3.11982,3.68672)--(3.13983,3.70203)--(3.15995,3.7174)--(3.18019,3.73282)--(3.20055,3.7483)--(3.22103,3.76384)--(3.24164,3.77943)--(3.26236,3.79508)--(3.28321,3.81078)--(3.30419,3.82655)--(3.32529,3.84236)--(3.34651,3.85824)--(3.36786,3.87417)--(3.38933,3.89016)--(3.41094,3.90621)--(3.43267,3.92232)--(3.45453,3.93848)--(3.47651,3.95471)--(3.49863,3.97099)--(3.52088,3.98732)--(3.54326,4.00372)--(3.56578,4.02018)--(3.58842,4.03669)--(3.6112,4.05326)--(3.63412,4.0699)--(3.65717,4.08659)--(3.68035,4.10334)--(3.70368,4.12015)--(3.72714,4.13702)--(3.75074,4.15395)--(3.77448,4.17094)--(3.79835,4.18799)--(3.82237,4.2051)--(3.84653,4.22227)--(3.87083,4.2395)--(3.89528,4.2568)--(3.91987,4.27415)--(3.9446,4.29156)--(3.96948,4.30904)--(3.99451,4.32657)--(4.01968,4.34417)--(4.045,4.36183)--(4.07047,4.37955)--(4.09609,4.39734)--(4.12185,4.41518)--(4.14777,4.43309)--(4.17385,4.45106)--(4.20007,4.46909)--(4.22645,4.48719)--(4.25298,4.50535)--(4.27967,4.52357)--(4.30652,4.54185)--(4.33352,4.5602)--(4.36068,4.57861)--(4.388,4.59709)--(4.41548,4.61563)--(4.44311,4.63423)--(4.47092,4.65289)--(4.49888,4.67163)--(4.52701,4.69042)--(4.5553,4.70928)--(4.58375,4.7282)--(4.61237,4.74719)--(4.64116,4.76625)--(4.67012,4.78536)--(4.69925,4.80455)--(4.72854,4.8238)--(4.75801,4.84311)--(4.78765,4.86249)--(4.81746,4.88194)--(4.84744,4.90145)--(4.8776,4.92103)--(4.90793,4.94067)--(4.93845,4.96038)--(4.96913,4.98016)--(5.,5.);

\draw[red, fill = red] (5,5) circle (0.05);
\node at (5.2,5.2) {$\Lambda$};

\draw[red, fill = red] (2.54163,-0.553969) circle (0.05);
\node at (2.3,-0.75) {$E(\Lambda)$};

\draw[red, fill = red] (2.65654,3.3216) circle (0.05);
\node at (2.6,3.6) {$E^2(\Lambda)$};

\draw[red, fill = red] (4.1262,0.502048) circle (0.05);
\node at (4.4,0.2) {$E^3(\Lambda)$};

\end{tikzpicture}

\caption{An example of a set $\Vhat_\Lambda$ and the subset formed by its image under $E = E_\Lambda$, for the parameter $\Lambda = 1+i$. In this example the two radial arcs $I_1$ and $I_2$ end at the points $E(\Lambda)$ and $E^2(\Lambda)$.}
\label{fig:example}
\end{figure}

\subsection{Strictly forward invariant regions for real $\Lambda \in \mathcal{U}_\infty$}
When $\Lambda \in (-1/e,1/e)$ it is easy to see that the image of the open 
unit disk $\mathbb{D}$ under the map $E_{\Lambda}$ is relatively compact in $\mathbb{D}$. 
This takes care of all $\Lambda\in \mathcal{U}_\infty \cap \mathbb{R}_{\leq 0}$.
For $\Lambda >0$ we will use the same strategy as that in \cite{PR19}, 
namely to find a a change of coordinates which transforms the map to a strict contraction on the positive real axis. Using the contraction we will be able to construct a convex forward invariant set.

The map $E_{\Lambda}$
sends $\mathbb{R}_{\geq 0}$ to $\mathbb{R}_{\geq 0}$. 
Define the following bijection
of $\mathbb{R}_{\geq 0}$ by
\[
	\phi(x) = \log(1 + x) \quad\text{ with }\quad \phi^{-1}(x) = e^x - 1
\]
and let
\[
	h_{\Lambda}(x) = \left(\phi \circ E_{\Lambda} \circ \phi^{-1}\right)(x).
\]
While it is true that for all $\Lambda \in (0,e)$ the orbit of any
positive real number under the orbit of $E_{\Lambda}$ converges to a real attracting fixed point,
the maps $E_{\Lambda}$ are not all contractions on the positive real line. The maps $h_{\Lambda}$ have this added benefit.

\begin{lemma}
    Let $\Lambda \in (0,e)$. Then $h_{\Lambda}$ is a strict contraction on 
    the positive real line, i.e.\ $|h_{\Lambda}'(x)| < 1$ for all $x\in \mathbb{R}_{\geq 0}$. 
\end{lemma}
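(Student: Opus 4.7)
The plan is to differentiate $h_\Lambda$ explicitly using the chain rule, reduce the contraction condition to a single-variable inequality, and then minimize the resulting function to obtain the sharp threshold $\Lambda < e$.

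First, I would write $y=\phi^{-1}(x)=e^x-1$, so that $1+y=e^x$ and $E_\Lambda(y)=\Lambda e^{1-e^x}$. Setting $u=u(x)=\Lambda e^{1-e^x}$, a direct computation using $\phi'(y)=1/(1+y)$, $E_\Lambda'(y)=-\Lambda e^{-y}$ and $(\phi^{-1})'(x)=e^x$ gives
\[
    h_\Lambda'(x) \;=\; \phi'(E_\Lambda(y))\cdot E_\Lambda'(y)\cdot(\phi^{-1})'(x) \;=\; -\,\frac{u\,e^x}{1+u}.
\]
Since $u>0$ for $\Lambda>0$, the inequality $|h_\Lambda'(x)|<1$ is equivalent to $u\,e^x < 1+u$, i.e.\ $u(e^x-1)<1$, i.e.\
\[
    \Lambda\,e^{1-e^x}(e^x-1) \;<\; 1.
\]

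Next, I would substitute $t=e^x\ge 1$; the case $x=0$ (so $t=1$) is trivial because the left-hand side vanishes, so the only question is whether
\[
    \Lambda \;<\; g(t)\;:=\;\frac{e^{t-1}}{t-1} \qquad\text{for all } t>1.
\]
A one-variable calculus argument finishes the proof: $g'(t)=e^{t-1}(t-2)/(t-1)^2$ vanishes only at $t=2$, where $g(2)=e$, and $g(t)\to\infty$ as $t\downarrow 1$ or $t\to\infty$. Hence $\min_{t>1} g(t)=e$, so any $\Lambda\in(0,e)$ satisfies $\Lambda<e\le g(t)$ for all $t>1$, giving $|h_\Lambda'(x)|<1$ for all $x\ge 0$.

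There is no real obstacle here; the computation is routine. The only conceptual point worth flagging is that the bound $\Lambda<e$ is sharp and matches the right endpoint of the real interval $(-e^{-1},e)=\mathcal{C}_\infty\cap\mathbb{R}$, with equality realized precisely at $x=\log 2$; this is what makes the change of variables $\phi(x)=\log(1+x)$ the natural one, and it is also why the same coordinate change appeared in \cite{PR19} in the finite-degree setting.
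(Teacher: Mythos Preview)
Your proof is correct and follows essentially the same route as the paper: both compute $h_\Lambda'(x)$ explicitly and reduce the contraction condition to the inequality $e^x-1<\tfrac{1}{\Lambda}e^{e^x-1}$, which with $y=e^x-1$ amounts to $\Lambda<e^{y}/y$. The only cosmetic difference is that the paper invokes the standard inequality $y\le e^{y-1}$ directly (and then uses $\Lambda<e$), whereas you obtain the same bound by explicitly minimizing $g(t)=e^{t-1}/(t-1)$ via calculus.
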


\begin{proof}
    We calculate that 
    \[
        h_{\Lambda}'(x) = -\frac{e^x}{1 + \frac{1}{\Lambda}e^{e^x-1}}.
    \]
    Therefore the inequality $|h_{\Lambda}'(x)| < 1$ is equivalent 
    to the inequality $e^x-1 < \frac{1}{\Lambda}e^{e^x-1}$. Note that 
    for all real numbers $y$ we have $y \leq e^{y-1}$ and thus, letting
    $y = e^x-1$, we find 
    \(
        y \leq e^{y-1} < \frac{1}{\Lambda} e^{y},
    \)
    which is what was required.
\end{proof}

\begin{lemma}\label{lemma:positive strict invariant}
    For $\Lambda \in \mathcal{U}_\infty \cap \mathbb{R}$ there is 
    an bounded convex set $K_{\Lambda}$ containing $0$ that is 
    strictly invariant under the map $E_{\Lambda}$.
\end{lemma}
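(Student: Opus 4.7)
The plan is to split based on the sign of $\Lambda$. For $\Lambda \in \mathcal{U}_\infty \cap \mathbb{R}_{\le 0}$ one must have $\Lambda \in (-1/e, 0]$, and as already observed the closed unit disk $\overline{\mathbb{D}}$ is strictly $E_\Lambda$-invariant, so we are done. It therefore suffices to construct $K_\Lambda$ when $\Lambda \in (0, e)$; note that any $\Lambda \in \mathcal{U}_\infty \cap \mathbb{R}_{>0}$ must satisfy $\Lambda < e$, since for $\Lambda \ge e$ small complex perturbations leave $\overline{\mathcal{C}_\infty}$ and hence $\mathcal{V}_\infty$ by Lemma~\ref{lem: attracting/neutral fixed points}, so such $\Lambda$ cannot be interior to $\mathcal{V}_\infty$.

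For $\Lambda \in (0, e)$ the plan is to exploit the $\phi$-coordinates in which $h_\Lambda$ is a strict contraction on $\mathbb{R}$. First, using the contraction, pick a closed real interval $[-A, B]$ with $A \in (0, 1)$ and $B$ large enough so that $h_\Lambda([-A, B]) \subset (-A, B)$ with some definite slack; this is possible since $h_\Lambda$ is decreasing with $h_\Lambda(B) > 0$ and $h_\Lambda(-A)$ can be made less than $B$ by enlarging $B$. Holomorphicity of $h_\Lambda$ near the real axis together with $|h_\Lambda'| < 1$ on $[-A, B]$ yields $|h_\Lambda'| \le c < 1$ on the rectangle $N = [-A, B] \times [-\delta, \delta]$ for $\delta$ small, and the mean value inequality then gives $h_\Lambda(N)$ compactly contained in $N$. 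Pulling back by $\phi$ produces a strictly $E_\Lambda$-invariant annular sector centered at $-1$, which is not convex; in order to maintain convexity I would instead set
\[
K_\Lambda = \{x + iy : x \in [e^{-A}-1,\, e^B - 1], \; |y| \le \delta(1+x)\},
\]
motivated by the first-order relation $|\Im \phi(z)| \approx |\Im z|/(1+\Re z)$ for $|\Im z|$ small. This trapezoid is plainly bounded, convex, and contains $0$ in its interior.

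The main obstacle will be to verify strict $E_\Lambda$-invariance of $K_\Lambda$ directly. Writing $z = x + iy \in K_\Lambda$, one has $E_\Lambda(z) = \Lambda e^{-x}(\cos y - i \sin y)$. If $\delta$ is so small that $\delta e^B < \pi/2$ then $\cos y > 0$, and the real-part bound $\Re E_\Lambda(z) \in (e^{-A}-1,\, e^B -1)$ is immediate (positivity from below, $B$ large from above). The imaginary-part condition reduces to $\Lambda e^{-x}(|\sin y| - \delta \cos y) < \delta$; since the left side is monotone increasing in $|y|$ on the allowed range $|y| \le \delta(1+x) < \pi/2$, its maximum occurs at $|y| = \delta(1+x)$, where Taylor expansion in $\delta$ yields leading contribution $\delta \Lambda x e^{-x} + O(\delta^3)$. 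The identity $\max_{x \in \mathbb{R}} x e^{-x} = 1/e$ then gives $\Lambda x e^{-x} \le \Lambda/e$, which is strictly less than $1$ precisely because $\Lambda < e$, and the $O(\delta^2)$ correction is absorbed by this slack for $\delta$ small enough, completing the verification.
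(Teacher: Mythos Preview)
Your argument is correct. Both you and the paper handle $|\Lambda|<1/e$ with the unit disk and then attack $\Lambda\in(0,e)$ through the conjugate $h_\Lambda=\phi\circ E_\Lambda\circ\phi^{-1}$, but the executions diverge at the key step.

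The paper stays in $\phi$-coordinates throughout: it takes the stadium-shaped $\epsilon$-neighbourhood $I_\epsilon$ of $I=\phi([0,\Lambda])$, uses $|h_\Lambda'|<1$ to get $h_\Lambda(I_\epsilon)\subset\subset I_\epsilon$, and then shows that the pullback $\phi^{-1}(I_\epsilon)=\exp(I_\epsilon)-1$ is itself convex, via the identity $\exp(I_\epsilon)=[e^a,e^b]\cdot\exp(B_\epsilon(0))$ together with convexity of $\exp(B_\epsilon(0))$ for $\epsilon<1$ and the elementary fact that a positive-real interval of scalings of a convex set is convex. So your worry that the pullback is non-convex is unfounded for the \emph{tubular} neighbourhood; it is only the rectangle whose pullback (an annular sector) fails convexity.

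You instead construct the set directly as the trapezoid $|y|\le\delta(1+x)$ in the $Z$-plane and verify strict invariance by a bare-hands estimate. The trapezoid is exactly the linearisation of the strip $|\Im\phi|\le\delta$, and your decisive inequality $\Lambda x e^{-x}<1$ is, after the substitution $x=e^w-1$, literally the contraction statement $|h_\Lambda'(w)|<1$. So the two proofs encode the same mechanism: yours makes convexity trivial at the cost of a Taylor-expansion invariance check, while the paper makes invariance immediate (just the contraction) at the cost of the product-of-convex-sets argument for $\exp(I_\epsilon)$.
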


\begin{proof}
    As discussed previously, for $\Lambda \in (-1/e,1/e)$ we can take $K_{\Lambda}$ to be 
    the open unit disk. We may therefore assume that $\Lambda > 0$. The map $\phi$, as defined
    above, can be extended to an holomorphic map on the half plane 
    $\{z: \textrm{Re}(z) > -1\}$. It follows that we can extend 
    $h_{\Lambda}$ to a map on a neighborhood of the positive real line. 
    
    Let $I = \phi([0,\Lambda])$. Because $[0,\Lambda]$ is forward invariant under $E_{\Lambda}$, the interval $I$ is forward invariant under $h_{\Lambda}$.
    It follows from the fact that $h_{\Lambda}$ 
    is a strict contraction on $I$ that for small enough $\epsilon$ the 
    tubular neighborhood
    $I_{\epsilon} = \{z \in \mathbb{C}: d(I,z) < \epsilon\}$
    gets mapped strictly inside itself by $h_{\Lambda}$. That is, 
    $h_{\Lambda}(I_\epsilon)$ is relatively compact in $I_\epsilon$. 
    It follows that
    $E_{\Lambda}(\phi^{-1}(I_\epsilon))$ is relatively compact in 
    $\phi^{-1}(I_\epsilon)$.
    
    All that remains to be shown is that $\phi^{-1}(I_\epsilon)$ is convex for $\epsilon$ small enough. Write $I = [a,b]$ and for a ball 
    of radius $\epsilon$ centered at $z$ write $B_{\epsilon}(z)$,
    then we see that 
    \[
        \exp(I_\epsilon) = \exp\left(\bigcup_{x \in [a,b]} B_{\epsilon}(x)\right)
        =
        \bigcup_{x \in [a,b]} \exp\left(B_{\epsilon}(x)\right)
        =
        \bigcup_{x \in [a,b]} e^{x} \exp(B_{\epsilon}(0))
        = 
        [e^a, e^b] \cdot \exp(B_{\epsilon}(0)).
    \]
    If $\epsilon < 1$ then 
    $\exp(B_\epsilon(0))$ is convex (see e.g.~equation~{(\ref{eq: convexity}})), and thus $\phi(I_\epsilon)$ is convex as an entrywise product of convex sets.
\end{proof}

\subsection{Conclusion of Theorem \ref{thm: theorem A}}
\label{sec: the proof}

Let $K_1$ be a closed set contained in $\mathcal{U}_\infty$. For $\Lambda \in K_1$ let $K_\Lambda$ denote a bounded convex set containing $0$ that is strictly invariant under the map $E_{\Lambda}$. Such sets exist by Lemmas~\ref{lem: convex}~and~\ref{lemma:positive strict invariant}. It follows from Lemma~\ref{lem: strictly forward invariant} that there exists a neighborhood $U_{\Lambda}$ of $\Lambda$ and a $d_{\Lambda}$ such that for $d \geq d_{\Lambda}$ we have that $U_{\Lambda} \subseteq d \cdot \mathcal{U}_d$. The set $K_1$ is compact and can thus be covered by finitely such $U_{\Lambda}$, say $U_{\Lambda_1}, \dots, U_{\Lambda_n}$. By taking $d_0 = \max_{i=1,\dots, n} d_{\Lambda_i}$ we see that $K_1 \subseteq d \cdot \mathcal{U}_\infty$ for $d \geq d_0$.

Let $K_2$ be an open set containing $\overline{\mathcal{U}_\infty}$. We will show that for large enough $d$ the sets $d\cdot\mathcal{U}_d$ are contained in $K_2$. In other words: we will show that the complement $(K_2)^c$ is contained in $d \cdot \overline{\mathcal{Z}_d}$ for $d$ large enough. For this we use that zeros are dense outside the cardioid for any finite degree, which was proved in \cite{BBGPR}, building upon results from \cite{Galanisetal20}:
\begin{lemma}
    \label{lem: zeros outside}
    Let $d \geq 2$, the zero locus $\overline{\mathcal{Z}_d}$ contains the complement of the cardioid, i.e. $(\mathcal{C}_d)^c \subseteq \overline{\mathcal{Z}_d}$.
\end{lemma}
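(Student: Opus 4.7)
The approach I propose is based on the complex dynamics of the one-variable rational map
\[
    \psi_\lambda(z) = \frac{\lambda}{(1+z)^d},
\]
obtained from the multi-variable tree recursion $F_\lambda$ by setting all $d$ arguments equal. By Remark~\ref{rem:cardioid}, $\mathcal{C}_d$ is precisely the parameter set for which $\psi_\lambda$ has an attracting fixed point, so for any $\lambda_0 \notin \overline{\mathcal{C}_d}$ every fixed point of $\psi_{\lambda_0}$ is repelling. Moreover $0$ is a critical value of $\psi_\lambda$ (the pre-image of $0$ at $\infty$ has local degree $d$), so the forward orbit of $0$ is the critical orbit of this one-parameter family.

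The first step is to consider, on a neighborhood $U$ of $\lambda_0$, the sequence of rational functions $\Phi_n(\lambda) := \psi_\lambda^n(0)$. Iterating the recursive definition of $\mathcal{F}_{d,\lambda}$ with all branches equal, one sees that $\Phi_n$ coincides with the occupation ratio $R_{T_n, v_n}$ of the complete rooted $d$-ary tree $(T_n, v_n)$ of growing depth, which belongs to $\mathcal{G}_d^d$. By Lemma~\ref{lem: -1} it then follows that any $\lambda$ for which $\Phi_n(\lambda) = -1$ lies in $\mathcal{Z}_d$. It therefore suffices to show that in every neighborhood of $\lambda_0$ the equation $\Phi_n(\lambda) = -1$ has a solution for some $n$.

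The core step is to show that $\{\Phi_n\}$ is not normal at $\lambda_0$, after which Montel's theorem forces this family to attain every value in $\widehat{\mathbb{C}}$ with at most two exceptions on any sub-neighborhood of $\lambda_0$, in particular the value $-1$. For non-normality I would use the implicit function theorem to extend a repelling fixed point $p_{\lambda_0}$ of $\psi_{\lambda_0}$ to a holomorphic family $\lambda \mapsto p_\lambda$ of repelling fixed points with $|\psi_\lambda'(p_\lambda)|>1$, and then argue via Koenigs linearization near $p_\lambda$ that the sensitivity of the critical orbit to $\lambda$ precludes equicontinuity. Alternatively one can invoke the general fact from the theory of parameter-space bifurcations (Mañé--Sad--Sullivan) that the critical-orbit family of a holomorphic family of rational maps is non-normal at every parameter outside the union of the hyperbolic components. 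Once non-normality is established, boundary points $\lambda_0 \in \partial \mathcal{C}_d$ are captured automatically, since they are limits of exterior points and $\overline{\mathcal{Z}_d}$ is closed.

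The main obstacle is to make the non-normality argument completely self-contained and to rule out degenerate coincidences, for example that the orbit of $0$ happens to land on a (super)attracting or neutral cycle of $\psi_{\lambda_0}$ itself, or that some $\Phi_n$ is an identically constant rational function of $\lambda$. A clean workaround is to first show that $\mathcal{C}_d$ coincides exactly with the set of parameters where the orbit of $0$ converges to an attracting fixed point, so that outside $\overline{\mathcal{C}_d}$ the critical orbit is bounded away from every attracting basin; combined with the linearization argument above and Lemma~\ref{lem: -1}, this gives the desired density of $\mathcal{Z}_d$ in $(\mathcal{C}_d)^c$.
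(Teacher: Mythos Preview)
The paper does not prove this lemma at all; it is quoted as a known result from \cite{BBGPR} (building on \cite{Galanisetal20}), so there is no in-paper argument to compare against.

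Your proposed approach, using only iterates of the single map $\psi_\lambda(z)=\lambda/(1+z)^d$, has a genuine gap. You need the family $\{\Phi_n(\lambda)=\psi_\lambda^n(0)\}$ to be non-normal at every $\lambda_0\notin\overline{\mathcal{C}_d}$, but this is false. The cardioid $\mathcal{C}_d$ is only the \emph{period-one} hyperbolic component of the unicritical family $\psi_\lambda$; there are further hyperbolic components in $(\mathcal{C}_d)^c$ in which $\psi_\lambda$ has an attracting cycle of period $\ge 2$. At any such parameter the critical orbit converges to that cycle, the family $\{\Phi_n\}$ is normal, and there is no mechanism forcing some $\Phi_n$ to take the value $-1$ nearby. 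Your proposed workaround (``outside $\overline{\mathcal{C}_d}$ the critical orbit is bounded away from every attracting basin'') is precisely the false statement: it confuses ``no attracting fixed point'' with ``no attracting cycle''. Likewise, the Ma\~n\'e--Sad--Sullivan fact you invoke only yields non-normality outside the union of \emph{all} hyperbolic components, not outside $\mathcal{C}_d$ alone. In short, complete $d$-ary trees by themselves cannot be expected to have zeros dense in the higher-period hyperbolic components of $\psi_\lambda$.

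The argument in the cited references genuinely uses the full semi-group $\mathcal{F}_{d,\lambda}$ --- compositions built from $F_\lambda$ with varying numbers of children at each node --- rather than iteration of a single map. This extra freedom is exactly what is needed to reach $-1$ from $0$ for every $\lambda$ outside $\mathcal{C}_d$, including those lying in higher-period hyperbolic components of $\psi_\lambda$.
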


Because the real interval $(-1/e,e)$ is contained in $\mathcal{U}_\infty$ there is a real number $\delta > 1$ such that $\delta \cdot (-1/e,e)$ is contained in 
$K_2$. It follows that we can cover the complement $(K_2)^c$ by the 
union of the complement of $(\delta \cdot \mathcal{C}_\infty)$ and a compact 
set $K$ that does not intersect $\mathbb{R} \cup \mathcal{V}_\infty$.

The sets $d \cdot \mathcal{C}_d$ converge to $\mathcal{C}_\infty$ and thus by Lemma~\ref{lem: zeros outside} it follows that $(\delta \cdot \mathcal{C}_\infty)^c \subseteq d \cdot \overline{\mathcal{Z}_d}$ for large enough $d$. It remains to be shown that we can choose $d$ large enough such that the sets $d \cdot \overline{\mathcal{Z}_d}$ also contain $K$. By compactness of $K$ it is sufficient to show that every $\Lambda \in K$ has a neighborhood $X_{\Lambda}$ that is contained in $d \cdot \overline{\mathcal{Z}_d}$ for sufficiently large degree $d$.

Let $\Lambda_0 \in K$. Because $\Lambda_0$ is neither real nor a member of $\mathcal{V}_\infty$, Lemma~\ref{lem: Vhat is C} implies that $V_{\Lambda_0} = \mathbb{C}$. Let $Z_0 \in \mathbb{C}$ be a value such that $\Lambda_0 e^{-Z_0} = 100$. Choose $g_\Lambda \in G_{\Lambda}$ such that $g_{\Lambda_0}(0) = 2 Z_0$. It follows from Lemma~\ref{lem: approximate ratios} that there is a sequence of rooted graphs $(G_d,v_d) \in \mathcal{G}_d^{\lfloor\frac{d}{2}\rfloor}$ such the maps $\Lambda \mapsto d \cdot R_{G_d,v_d}(\Lambda/d)$ converge uniformly on compact subsets to $\Lambda \mapsto \Lambda \cdot e^{-\frac{1}{2}g_{\Lambda}(0)}$. Therefore there 
is a $d_0$ and a neighborhood $X_{\Lambda_0}$ such that for $d \geq d_0$
and $\Lambda \in X_{\Lambda_0}$ we have $|d \cdot R_{G_d,v_d}(\Lambda/d)| \geq 99$. The cardioid $\mathcal{C}_{\lfloor\frac{d}{2}\rfloor}$ is contained in a disk of radius 
\[
    \frac{(\lfloor\frac{d}{2}\rfloor)^{\lfloor\frac{d}{2}\rfloor}}{(\lfloor\frac{d}{2}\rfloor-1)^{\lfloor\frac{d}{2}\rfloor+1}},
\]
which for $d\geq 4$ is less than $99/d$. This means that the values $R_{G_d,v_d}(\Lambda/d)$ lie in the complement of $\mathcal{C}_{\lfloor\frac{d}{2}\rfloor}$ and thus in $\overline{\mathcal{Z}_{\lfloor\frac{d}{2}\rfloor}}$. It finally follows from Lemma~\ref{lem: implementation} that for $d \geq d_0$ the sets $d \cdot \overline{\mathcal{Z}_d}$ contain $X_{\Lambda_0}$.

\hfill $\square$

\section{Near the real boundary of $\mathcal{U}_\infty$.} \label{section:realboundary}

\label{sec: gamma curve}

\subsection{Around $e$}
For $\theta \in (0,\pi)$ we define $\gamma(\theta)$ as the unique positive real number that solves
\begin{equation}
    \label{eq: defining gamma}
    \gamma(\theta)^2 - \sin(\gamma(\theta))^2 = \theta^2.
\end{equation}
To keep the notation readable we will drop the dependency on $\theta$ and just write $\gamma=\gamma(\theta)$.
We define 
\[
    \hat{\Lambda}(\theta) = \frac{\gamma+\theta}{\sin\gamma} e^{(\gamma-\theta)\cot\gamma} e^{\theta i},
    \quad
    \hat{c}(\theta) = \frac{\gamma-\theta}{\gamma+\theta} e^{2\theta \cot \gamma}
    \quad
    \text { and }
    \quad
    \hat{Z}(\theta) = \frac{\gamma+\theta}{\sin \gamma} e^{-2\theta \cot \gamma}
    e^{-\gamma i}.
\]
We can continuously extend these functions to be defined for $\theta = 0$ by setting
$\gamma(0) = 0$, $\hat{\Lambda}(0) = e$, $\hat{c}(0) = 1$ and $\hat{Z}(0) = 1$.
We define the composition $H_{\Lambda,c} = E_{\Lambda} \circ E_{\Lambda,c}$.
\begin{lemma}
    Let $\theta \in [0,\pi)$
    then $\hat{Z}(\theta)$ is a fixed point of $H_{\hat{\Lambda}(\theta), \hat{c}(\theta)}$ with multiplier $1$.
\end{lemma}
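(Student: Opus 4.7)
The plan is to verify both assertions by direct computation, exploiting the fact that the definitions of $\hat\Lambda(\theta)$, $\hat c(\theta)$ and $\hat Z(\theta)$ are reverse-engineered so that everything telescopes cleanly against the defining relation (\ref{eq: defining gamma}).

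I would first check $H_{\hat\Lambda,\hat c}(\hat Z) = \hat Z$ by computing the composition in two stages. The product $\hat c\hat Z$ simplifies at once: the two reciprocal factors $e^{\pm 2\theta\cot\gamma}$ cancel and the amplitudes telescope to $\hat c\hat Z = \frac{\gamma-\theta}{\sin\gamma}\,e^{-i\gamma}$. Exponentiating, using $\Re(e^{-i\gamma}) = \cos\gamma$, and multiplying by $\hat\Lambda$ then yields the intermediate value
\[
    W \;:=\; E_{\hat\Lambda,\hat c}(\hat Z) \;=\; \hat\Lambda\,e^{-\hat c\hat Z} \;=\; \frac{\gamma+\theta}{\sin\gamma}\,e^{i\gamma}.
\]
A second application of the same idea, noting $\Re(W) = (\gamma+\theta)\cot\gamma$, shows that $\hat\Lambda\,e^{-W}$ has amplitude $\frac{\gamma+\theta}{\sin\gamma}e^{-2\theta\cot\gamma}$ and phase $e^{-i\gamma}$, recovering $\hat Z$ exactly.

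For the multiplier I would use the chain rule together with the identities $E_\Lambda'(W) = -E_\Lambda(W)$ and $E_{\Lambda,c}'(Z) = -c\,E_{\Lambda,c}(Z)$, which give the general formula $H_{\Lambda,c}'(Z) = c\cdot E_{\Lambda,c}(Z)\cdot H_{\Lambda,c}(Z)$. Evaluated at the fixed point $\hat Z$, using $H_{\hat\Lambda,\hat c}(\hat Z) = \hat Z$ and $E_{\hat\Lambda,\hat c}(\hat Z) = W$, this becomes
\[
    H_{\hat\Lambda,\hat c}'(\hat Z) \;=\; \hat c\,\hat Z\cdot W \;=\; \frac{\gamma-\theta}{\sin\gamma}\,e^{-i\gamma}\cdot\frac{\gamma+\theta}{\sin\gamma}\,e^{i\gamma} \;=\; \frac{\gamma^2-\theta^2}{\sin^2\gamma},
\]
which equals $1$ precisely by (\ref{eq: defining gamma}).

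The only real \emph{obstacle} is bookkeeping: the statement is designed so that a single substitution together with one application of the identity $\gamma^2 - \sin^2\gamma = \theta^2$ at the very end closes both parts simultaneously. The conceptually useful observation is that both the intermediate value $W$ and the quantity $\hat c\hat Z$ have moduli $\frac{\gamma\pm\theta}{\sin\gamma}$ and opposite phases $e^{\pm i\gamma}$; this is exactly the structure that makes their product the unit multiplier, and it is what the defining equation for $\gamma$ is built to ensure.
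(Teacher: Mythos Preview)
Your proof is correct and follows essentially the same approach as the paper: both compute $\hat c\hat Z$, then $E_{\hat\Lambda,\hat c}(\hat Z)$, then $H_{\hat\Lambda,\hat c}(\hat Z)$ step by step, and both use the chain-rule identity $H_{\Lambda,c}'(Z)=c\,E_{\Lambda,c}(Z)\,H_{\Lambda,c}(Z)$ to reduce the multiplier to $(\gamma^2-\theta^2)/\sin^2\gamma$ and invoke the defining relation for $\gamma$. Your naming of the intermediate value $W$ and your closing structural remark are the only cosmetic differences.
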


\begin{proof}
We first check that $\hat Z(\theta)$ is a fixed point, and afterwards compute its multiplier.
 
 As $H_{\Lambda, c}( Z)=E_{\Lambda}(E_{\Lambda, c}( Z))=E_{\Lambda}(E_{\Lambda}( c Z))$ we calculate first
 \begin{align*}
     \hat c(\theta) \hat Z (\theta) = \left(\frac{\gamma-\theta}{\gamma+\theta} e^{2\theta \cot \gamma}\right) \cdot \left(\frac{\gamma+\theta}{\sin \gamma} e^{-2\theta \cot \gamma}
    e^{-\gamma i}\right)
                                    =\frac{\gamma-\theta}{\sin\gamma}e^{-\gamma i}.
 \end{align*}
 Therefore its image
 \begin{align*}
     E_{\hat \Lambda,\hat c}(\hat Z)=E_{\hat\Lambda}(\hat c\hat Z)=\hat \Lambda e^{-\hat c\hat Z}
                    =\left(\frac{\gamma+\theta}{\sin\gamma} e^{(\gamma-\theta)\cot\gamma} e^{\theta i}\right)\left(e^{-(\gamma-\theta)\cot(\gamma)}e^{(\gamma-\theta)i}\right)
                    =\frac{\gamma+\theta}{\sin\gamma}e^{\gamma i}
 \end{align*}
 and
 \begin{align*}
     H_{\hat \Lambda,\hat c}(\hat Z)=\hat\Lambda e^{-E_{\hat \Lambda,\hat c}(\hat Z)}
                    =\left(\frac{\gamma+\theta}{\sin\gamma} e^{(\gamma-\theta)\cot\gamma} e^{\theta i}\right)\left(e^{-(\gamma+\theta)\cot\gamma}e^{-(\gamma+\theta)i}\right)
                    =\frac{\gamma+\theta}{\sin\gamma}e^{-2\theta\cot \gamma}e^{-\gamma i}=\hat Z.
 \end{align*}
 We see that $\hat Z$ is indeed a fixed point. The derivative of $H_{\Lambda,c}(Z)$ is 
 \[
    H_{\Lambda, c}'(Z)=H_{\Lambda, c}(Z)\left(-E_{\Lambda,c}(Z)\right)(-c),
 \]
 and thus
 \begin{align*}
     H_{\hat \Lambda, \hat  c}'(\hat Z)=\hat c E_{\hat \Lambda,\hat c}(\hat Z)H_{\hat \Lambda,\hat c}(\hat Z)
     = E_{\hat \Lambda,\hat c}(\hat Z)\cdot \left( \hat c \hat Z\right)
     = \left(\frac{\gamma+\theta}{\sin\gamma}e^{\gamma i} \right)\left( \frac{\gamma-\theta}{\sin\gamma}e^{-\gamma i} \right)
     =\frac{\gamma^2-\theta^2}{\sin^2\gamma}=1,
 \end{align*}
 which completes the proof.
\end{proof}

\begin{lemma}
    If $\theta \in (0,\pi)$ then $\hat{c}(\theta) \in (0,1)$. 
\end{lemma}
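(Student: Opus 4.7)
The proof will proceed in two steps: first reduce the claim to a scalar inequality via a clean substitution, then verify that inequality by a monotonicity argument.

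First I would record two facts about the defining equation $\gamma^2 - \sin^2\gamma = \theta^2$. Since $\sin^2\gamma \ge 0$, we have $\gamma \ge \theta$, with equality only when $\sin\gamma = 0$. For $\theta \in (0,\pi)$ one checks that the unique positive solution $\gamma$ lies in $(0,\pi)$, so $\sin\gamma > 0$ and thus $\gamma > \theta$ strictly. In particular $(\gamma - \theta)/(\gamma+\theta) \in (0,1)$ and $\hat c(\theta) > 0$ is immediate.

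Next, I would introduce the substitution $\gamma - \theta = \sin\gamma \cdot e^{-\phi}$, $\gamma + \theta = \sin\gamma \cdot e^{\phi}$, which is consistent precisely because $(\gamma-\theta)(\gamma+\theta) = \gamma^2 - \theta^2 = \sin^2\gamma$. This uniquely determines $\phi > 0$, and the sum/difference give the identities
\[
    \gamma = \sin\gamma \cdot \cosh\phi, \qquad \theta = \sin\gamma \cdot \sinh\phi.
\]
In these coordinates
\[
    \log\hat c(\theta) = -2\phi + 2\theta \cot\gamma = -2\phi + 2\cos\gamma \cdot \sinh\phi,
\]
so the inequality $\hat c(\theta) < 1$ becomes $\phi > \cos\gamma \cdot \sinh\phi$. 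When $\gamma \in [\pi/2,\pi)$ the right side is non-positive and the inequality is trivial; the content of the lemma is the case $\gamma \in (0,\pi/2)$, where both sides are positive.

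To handle both cases uniformly, regard $\phi$ as a function of $\gamma \in (0,\pi)$ through $\cosh\phi(\gamma) = \gamma/\sin\gamma$ and set $\Phi(\gamma) = \phi(\gamma) - \cos\gamma \cdot \sinh\phi(\gamma)$. One verifies $\Phi(0) = 0$. Implicit differentiation gives $\phi'(\gamma) = (\sin\gamma - \gamma\cos\gamma)/(\sin^2\gamma \cdot \sinh\phi)$, and a direct computation (using $\cos\gamma \cdot \cosh\phi = \gamma\cot\gamma$) yields
\[
    \Phi'(\gamma) = \frac{(\sin\gamma - \gamma\cos\gamma)^2}{\sin^3\gamma \cdot \sinh\phi} + \sin\gamma \cdot \sinh\phi.
\]
Both terms are manifestly positive on $(0,\pi)$, so $\Phi$ is strictly increasing, hence $\Phi(\gamma) > 0$ throughout $(0,\pi)$. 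This is the desired inequality.

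The main obstacle is spotting the hyperbolic substitution; once it is in place the derivative computation is routine and the positivity of $\Phi'$ is visible by inspection. The factor $(\sin\gamma - \gamma\cos\gamma)^2$ being a square is what makes the sign argument clean, neatly circumventing the sign change of $\cos\gamma$ at $\pi/2$.
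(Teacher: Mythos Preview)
Your proof is correct but considerably more elaborate than the paper's. The paper does not use the exact relation $\gamma^2-\theta^2=\sin^2\gamma$ beyond extracting $\gamma>\theta$; instead it invokes the elementary bound $\cot\gamma<1/\gamma$ on $(0,\pi)$ to get
\[
\hat c(\theta)<\frac{\gamma-\theta}{\gamma+\theta}\,e^{2\theta/\gamma}=\frac{1-x}{1+x}\,e^{2x},\qquad x=\theta/\gamma\in(0,1),
\]
and then checks in one line that $x\mapsto \frac{1-x}{1+x}e^{2x}$ has derivative $-\frac{2x^2 e^{2x}}{(1+x)^2}<0$, so the right-hand side is strictly below its value $1$ at $x=0$.

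Your hyperbolic substitution is clever and exploits the defining equation more intrinsically; it has the pleasant feature that the squared factor $(\sin\gamma-\gamma\cos\gamma)^2$ makes the sign of $\Phi'$ transparent without splitting at $\pi/2$. The cost is an implicit-function differentiation and a somewhat heavier computation where the paper gets by with a single crude overestimate. Either approach is fine; the paper's is shorter because it deliberately throws away information (replacing $\cot\gamma$ by $1/\gamma$) that you work to preserve.
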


\begin{proof}
Clearly for $\theta \in (0, \pi)$ the quantity $\gamma$ is strictly larger than 
$\theta$, and hence $\hat{c}(\theta)$ is strictly positive.
Because $\gamma \in (0,\pi)$ the inequality $\cot(\gamma) < 1/\gamma$ is valid.
Therefore we can write 
\[
    \hat{c}(\theta) = \frac{\gamma-\theta}{\gamma+\theta} e^{2\theta \cot \gamma} 
    < \frac{\gamma-\theta}{\gamma+\theta} e^{2\theta/\gamma} = \frac{1-x}{1+x} \cdot e^{2x},
\]
where $x = \theta/\gamma$. The right-hand side of this equation describes 
a strictly decreasing function starting at $1$ for $x=0$, as one can check by calculating its derivative. It therefore follows that $\hat{c}(\theta) < 1$.
\end{proof}

From now on we assume that $\theta$ is sufficiently small such that both
$\theta$ and $\gamma$ lie in $(0, \pi/2)$, i.e.\ such that the 
complex numbers $e^{i\theta}$ and $e^{-i\gamma}$ lie in the 
first and fourth quadrant of the plane respectively. We define the set  $T_{\theta} \subsetneq \mathbb{C}$ as the closed convex set bounded by the
following two line segments
\[
    I_1 = \left\{t\cdot(\theta + \gamma) i : t \in [0,1]\right\},
    \quad
    I_2 = \left\{t \cdot \frac{\pi/2-\theta}{\sin \gamma}  e^{-i \gamma} : t \in [0,1]\right\}
\]
and two infinite rays parallel to the real line starting at the endpoints 
of these segments
\[
    I_3 = \left\{(\theta + \gamma) i + t: t \in [0,\infty) \right\},
    \quad 
    I_4 = \left\{\frac{\pi/2-\theta}{\sin \gamma}  e^{-i \gamma} + t: t \in [0,\infty)\right\}.
\]
An explicit example of the set $T_\theta$ is illustrated in Figure~\ref{fig:T}.

\begin{lemma}
    \label{lem: T forward invariant}
    For $\theta$ sufficiently small the region $T_\theta$ contains 
    $\hat{Z}(\theta)$ and is forward invariant for $E_{\hat{\Lambda}(\theta)}$.
\end{lemma}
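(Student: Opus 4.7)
The plan is to verify that each of the four boundary pieces of $T_\theta$ is mapped into $T_\theta$ under $E_{\hat\Lambda(\theta)}$, and then to upgrade this to invariance of $T_\theta$ itself by a Jordan-curve argument in $\widehat{\mathbb{C}}$. Throughout I will use the asymptotics flowing from (\ref{eq: defining gamma}), namely $\gamma^4/3 = \theta^2 + O(\gamma^6)$, so $\gamma/\theta \to \infty$ and $\theta\cot\gamma \to 0$ as $\theta \to 0^+$.

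For the inclusion $\hat Z(\theta) \in T_\theta$: the point $\hat Z(\theta)$ lies on the ray from $0$ through $I_2$ (at argument $-\gamma$), and its modulus $\frac{\gamma+\theta}{\sin\gamma} e^{-2\theta\cot\gamma}$ tends to $1$ while the modulus $(\pi/2-\theta)/\sin\gamma$ of the endpoint of $I_2$ diverges, so $\hat Z(\theta) \in I_2 \subset T_\theta$ for $\theta$ small.

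For the boundary images: $E_{\hat\Lambda}(I_1)$ is a circular arc of radius $|\hat\Lambda|$ from argument $\theta$ down to argument $-\gamma$, whose imaginary part is bounded above by $|\hat\Lambda|\sin\theta$ and below by $-(\gamma+\theta)e^{(\gamma-\theta)\cot\gamma}$, both tending to $0$, so it lies inside the strip defining $T_\theta$. Next, $E_{\hat\Lambda}(I_3)$ is a segment along the same ray at argument $-\gamma$ as $I_2$, shrinking from modulus tending to $1$ down to $0$, hence lying inside $I_2$. Similarly, $E_{\hat\Lambda}(I_4)$ is a segment on the positive imaginary axis, shrinking from an exponentially small (in $1/\gamma$) initial height to $0$, hence lying inside $I_1$. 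The delicate case is $E_{\hat\Lambda}(I_2)$: parameterizing $I_2$ linearly by $t \in [0,1]$, the image is a logarithmic spiral with argument $\alpha(t) = \theta + t(\pi/2-\theta)$ and modulus $r(t) = |\hat\Lambda| e^{-t(\pi/2-\theta)\cot\gamma}$. The imaginary part $f(t) = r(t)\sin\alpha(t)$ has derivative proportional to $\sin(\gamma - \alpha(t))$, so it attains its unique maximum at the $t_*$ where $\alpha(t_*) = \gamma$; substituting the definitions of $\hat\Lambda$ and using (\ref{eq: defining gamma}) yields $f(t_*) = \gamma + \theta$ \emph{exactly}, so the spiral is tangent to the top edge $y = \gamma+\theta$ of $T_\theta$ and lies strictly below it otherwise.

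To upgrade boundary invariance to invariance of $T_\theta$: since $T_\theta$ lies in a horizontal strip of height $\pi/2 + \gamma < 2\pi$, the map $E_{\hat\Lambda}$ is injective on a neighborhood of it. Adjoining the point at infinity gives a closed set $T_\theta \cup \{\infty\} \subset \widehat{\mathbb{C}}$ homeomorphic to a closed disk, and $E_{\hat\Lambda}$ extends continuously and injectively by mapping $\infty \mapsto 0$. The image is a compact topological disk in $\mathbb{C}$ whose boundary is the Jordan curve $\Gamma = E_{\hat\Lambda}(\partial T_\theta) \cup \{0\}$, which by the boundary analysis lies inside $T_\theta$. Since $\mathbb{C} \setminus T_\theta$ is connected, unbounded, and disjoint from $\Gamma$, it is contained in the unbounded component of $\mathbb{C} \setminus \Gamma$. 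Hence $E_{\hat\Lambda}(T_\theta)$, being the closure of the bounded component, lies inside $T_\theta$. The main obstacle is the tangency identity $f(t_*) = \gamma + \theta$: all other boundary checks are slack for small $\theta$, but this equality uses the precise algebraic definitions of $\hat\Lambda$, $\hat Z$, $\hat c$ and is really the reason the construction works.
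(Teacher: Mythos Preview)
Your proof is correct and follows essentially the same strategy as the paper: check that each boundary piece $I_1,\dots,I_4$ is mapped into $T_\theta$, with the crucial step being the exact tangency computation $f(t_*)=\gamma+\theta$ for the spiral $E_{\hat\Lambda}(I_2)$; the paper handles the images of $I_3$ and $I_4$ via convexity of $T_\theta$ rather than your direct identification with subsegments of $I_2$ and $I_1$, and it replaces your Jordan-curve argument by the one-line remark that holomorphicity makes boundary invariance sufficient. One small slip: $|\hat\Lambda(\theta)|\to e$ (not $1$) as $\theta\to 0$, since $(\gamma-\theta)\cot\gamma\to 1$; this does not affect the argument because the length of $I_2$ still diverges.
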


\begin{proof}
    As $\theta$ approaches $0$ the modulus of $\hat{Z}(\theta)$ goes 
    to $1$, while the length of the line segment $I_2$ goes to infinity. 
    It follows that for $\theta$ sufficiently small $\hat{Z}(\theta) \in I_2$,
    and hence $\hat{Z}(\theta) \in T_\theta$.
    
    To show that $T_{\theta}$ is forward invariant for $E_{\hat{\Lambda}(\theta)}$
    it is sufficient to show that its boundary gets mapped into  $T_{\theta}$, because 
    $E_{\hat{\Lambda}(\theta)}$ is holomorphic. The image of $I_1$ is 
    a circular arc of the form $\{|\hat{\Lambda}(\theta)|e^{\phi i}: \phi \in [-\gamma, \theta]\}$. One of the endpoints of this arc is $\Lambda$ and the 
    other has the same argument as the interval $I_2$. Therefore $E_{\hat{\Lambda}(\theta)}(I_1)$ 
    lies in 
    $T_\theta$ if the imaginary part of $\hat{\Lambda}(\theta)$ 
    is less than or equal to 
    $(\theta + \gamma)$ and the modulus $|\hat{\Lambda}(\theta)|$ is at most the 
    length of $I_2$. The two inequalities involved are
    \begin{align}\label{eq:Gamma bound}
        \frac{\sin \theta }{\sin\gamma} e^{(\gamma-\theta)\cot\gamma} \leq 1
        \quad\text{ and }
        \quad
        (\gamma + \theta) e^{(\gamma-\theta)\cot\gamma} \leq \pi/2 - \theta.
    \end{align}
    As $\theta$ approaches $0$ both left-hand sides approach $0$ and thus 
    the inequalities are satisfied for $\theta$ sufficiently small.
    
    The image of $I_2$ is a part of a logarithmic spiral of the form 
    \[
        \{\hat \Lambda e^{t(\theta-\pi/2)\cot\gamma}e^{t(\pi/2-\theta)i} ~|~ t\in[0,1]\}.
    \]
    This curve starts at $\hat\Lambda$ and rotates $\pi/2-\theta> 0$ in anti-clockwise direction, since we assumed that $\theta\in(0,\pi/2)$. In particular this curve is in the upper half plane as $\arg\hat\Lambda +(\pi/2-\theta)=\pi/2<\pi$, moreover the endpoint is a positive multiple of $i$.
    In order to show that $I_2$ is in $T_\theta$, it is enough to show that the maximal imaginary part for points on the curve is at most $(\theta+\gamma)$. By taking derivatives with respect to $t$, the maximal imaginary part occurs when
    \[
        t^*=\frac{\gamma-\theta}{\pi/2-\theta}. 
    \]
    The imaginary part at $t^*$ is 
    \begin{align*}
        |\hat \Lambda|e^{t^*(\theta-\pi/2)\cot \gamma}\sin(\theta+t^*(\pi/2-\theta))&=\frac{\gamma+\theta}{\sin\gamma}e^{(\gamma-\theta)\cot\gamma} e^{(\theta-\gamma)\cot\gamma}\sin(\theta+\gamma-\theta)\\
        &=\frac{\gamma+\theta}{\sin\gamma}\sin\gamma=\gamma+\theta.
    \end{align*}
    
    Now we are left with the images of $I_3$ and $I_4$. Both of them are straight lines connecting $0\in T_\theta$ with one of the endpoints of the image of $I_1$ or $I_2$. As $T_\theta$ is convex and it contains the images of $I_1$ and $I_2$, the images of $I_3$ and $I_4$ are also contained in $T_\theta$.
\end{proof}

\begin{figure}[ht]
\centering
\begin{tikzpicture}

%Axes
\draw[black] (-0.375,0.) -- (11.,0.);
\node[black] at (11.75,0.){{\textrm{Re}(Z)}};
\draw[black] (0.,-4.25) -- (0.,2.5);\node[black] at (0.,2.875){{\textrm{Im}(Z)}};

%Boundaries of T_\theta
\draw[thick,red] (0.,0.) -- (0.,1.86078);
\draw[thick,red] (0.,0.) -- (5.49311,-3.47699);
\draw[thick,red] (0.,1.86078) -- (11.,1.86078);
\draw[thick,red] (5.49311,-3.47699) -- (11.,-3.47699);

%Image of T_\theta
\filldraw[dashed, fill=gray, fill opacity = 0.5] (0,0) -- (0.,0.709428) -- (0.0430703,0.773403) -- (0.0939086,0.840532) -- (0.153407,0.910629) -- (0.222526,0.983435) -- (0.302299,1.05861) -- (0.39383,1.13572) -- (0.498296,1.21423) -- (0.616949,1.29347) -- (0.751113,1.37266) -- (0.902183,1.45084) -- (1.07162,1.52691) -- (1.26096,1.59954) -- (1.47178,1.66723) -- (1.70573,1.72823) -- (1.96447,1.78052) -- (2.24972,1.82182) -- (2.5632,1.84952) -- (2.90663,1.8607) -- (3.28172,1.85203) -- (3.6901,1.8198) -- (4.13335,1.75988) -- (4.61293,1.66764) -- (5.13016,1.53797) -- (5.68616,1.3652) -- (6.28182,1.1431) -- (6.28182,1.1431) -- (6.31306,0.955595) -- (6.33871,0.767244) -- (6.35874,0.578213) -- (6.37313,0.388669) -- (6.38188,0.198781) -- (6.38497,0.00871605) -- (6.3824,-0.181356) -- (6.37417,-0.371268) -- (6.36029,-0.56085) -- (6.34078,-0.749935) -- (6.31565,-0.938356) -- (6.28491,-1.12594) -- (6.24861,-1.31254) -- (6.20677,-1.49796) -- (6.15943,-1.68206) -- (6.10663,-1.86467) -- (6.04841,-2.04563) -- (5.98484,-2.22477) -- (5.91596,-2.40194) -- (5.84184,-2.57698) -- (5.76254,-2.74974) -- (5.67813,-2.92006) -- (5.58869,-3.0878) -- (5.49429,-3.25279) -- (5.39503,-3.41491) -- (0,0);

%Red Descriptions
\node[red] at (-0.375,0.93039){$I_1$};
\node[red] at (2.48139,-2.00366){$I_2$};
\node[red] at (8.25,2.23578){$I_3$};
\node[red] at (8.25,-3.85199){$I_4$};

%Other Descriptions
\draw[fill=black] (6.28182,1.1431) circle (0.05);
\node[black] at (6.64516,1.23588){$\hat{\Lambda}$};
\draw[fill=black] (2.93975,1.86078) circle (0.05);
\node[black] at (2.93975,2.23578){{$E_{\hat{\Lambda}}(\hat{c}\hat{Z})$}};
\node[black] at (3.31475,0.375){{$E_{\hat{\Lambda}}(T_{\theta})$}};\draw[fill=black] (1.66459,-1.05364) circle (0.05);
\node[black] at (1.39942,-1.31881){{$\hat{Z}$}};

\end{tikzpicture}
\caption{The region $T_\theta$ and its image under the map $E_{\hat{\Lambda}}$ for 
$\theta = 0.18$.}
\label{fig:T}
\end{figure}
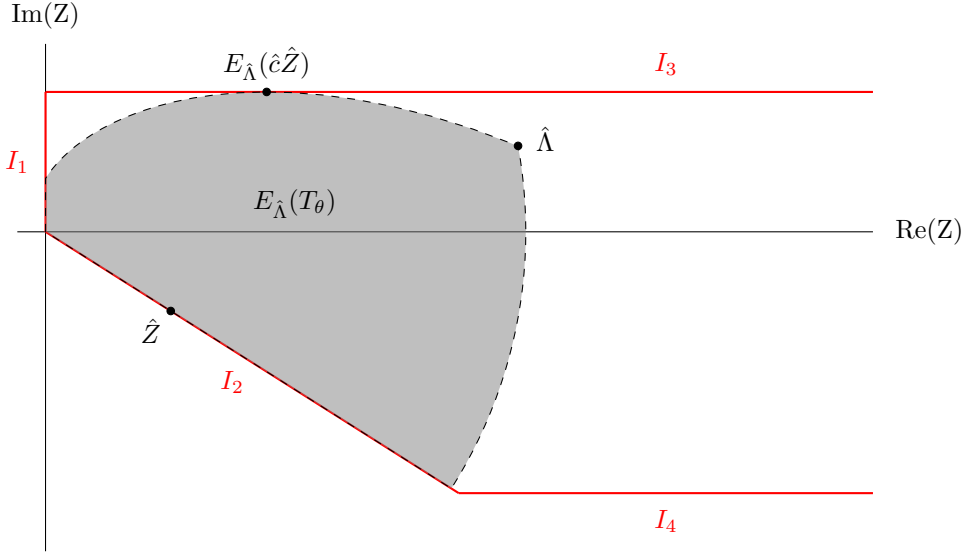

\begin{corollary}
    \label{cor: curve Gamma}
    There exists a $\theta_{\text{max}} > 0$ such that the curves
    \[
        \Gamma = \{\hat{\Lambda}(\theta): \theta \in [0,\theta_{\text{max}}]\}
        \quad \text{ and }\quad
        \overline{\Gamma} = \{\overline{\hat{\Lambda}(\theta)}: \theta \in [0,\theta_{\text{max}}]\}
    \]
    are contained in $\partial \mathcal{U}_\infty$.
\end{corollary}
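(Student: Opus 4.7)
The plan is to show, for each $\theta\in(0,\theta_{\max}]$, both that $\hat\Lambda(\theta)\notin\mathcal{U}_\infty$ and that $t\hat\Lambda(\theta)\in\mathcal{U}_\infty$ for every $t\in(0,1)$; taken together, these force $\hat\Lambda(\theta)\in\partial\mathcal{U}_\infty$. I would choose $\theta_{\max}>0$ small enough that the hypotheses of Lemma~\ref{lem: T forward invariant} hold uniformly on $[0,\theta_{\max}]$, so that $T_\theta$ is $E_{\hat\Lambda(\theta)}$-invariant and contains $\hat Z(\theta)$.

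For the non-membership $\hat\Lambda(\theta)\notin\mathcal{U}_\infty$, set $g_\theta = H_{\hat\Lambda(\theta),\hat c(\theta)}$. Because $\hat c(\theta)\in(0,1)$, the recursive definition of $G_\Lambda$ (with $k=1$, $s_1=\hat c(\theta)$) places $E_{\hat\Lambda,\hat c}$ in $G_{\hat\Lambda(\theta)}$, and composing with $E_{\hat\Lambda}$ then yields $g_\theta\in G_{\hat\Lambda(\theta)}$. Since $T_\theta$ is convex and contains $0$ we have $\hat c(\theta)\cdot T_\theta\subseteq T_\theta$, which combined with $E_{\hat\Lambda(\theta)}(T_\theta)\subseteq T_\theta$ gives $g_\theta(T_\theta)\subseteq T_\theta$. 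The first lemma of this section identifies $\hat Z(\theta)\in T_\theta$ as a fixed point of $g_\theta$ with multiplier $1$, and $\hat\Lambda(\theta)$ is non-real for $\theta>0$; Lemma~\ref{lem:neutral fixed point exclusion} therefore gives $\hat\Lambda(\theta)\notin\mathcal{U}_\infty$.

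For the containment $\hat\Lambda(\theta)\in\overline{\mathcal{U}_\infty}$, I would first apply Lemma~\ref{lem: attracting/neutral fixed points} to the non-real parameter $\hat\Lambda(\theta)$ with the proper convex $E_{\hat\Lambda(\theta)}$-invariant set $T_\theta\neq\mathbb C$ to conclude $\hat\Lambda(\theta)\in\mathcal{V}_\infty$. Then for any $t\in(0,1)$, set $\Lambda'=t\hat\Lambda(\theta)\notin\mathbb R$ and $s=1/t>1$; since $s\Lambda'=\hat\Lambda(\theta)\in\mathcal{V}_\infty$, Lemma~\ref{lem: convex} produces a bounded convex set $K'$ containing $0$ that is strictly $E_{\Lambda'}$-invariant. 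By continuity of $(\Lambda,Z)\mapsto\Lambda e^{-Z}$ on compact subsets of $\mathbb C\times\overline{K'}$, this strict invariance persists for $\Lambda$ in a neighborhood of $\Lambda'$, so the neighborhood lies in $\mathcal{V}_\infty$ and hence $\Lambda'\in\Int(\mathcal{V}_\infty)=\mathcal{U}_\infty$. Letting $t\to 1$ gives $\hat\Lambda(\theta)\in\overline{\mathcal{U}_\infty}$, which together with the previous paragraph places $\hat\Lambda(\theta)$ on $\partial\mathcal{U}_\infty$. The endpoint $\theta=0$ is immediate, since $\hat\Lambda(0)=e$ is the right endpoint of $\mathcal{U}_\infty\cap\mathbb R=(-1/e,e)$, and the statement for $\overline\Gamma$ follows from the conjugation symmetry $g\mapsto(Z\mapsto\overline{g(\bar Z)})$, which maps $G_\Lambda$ onto $G_{\bar\Lambda}$ and makes $\mathcal{U}_\infty$ invariant under complex conjugation.

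The main obstacle I anticipate is the persistence step in the third paragraph: I must argue that strict $E_{\Lambda'}$-invariance of the bounded convex set $K'$ remains valid for $E_{\Lambda''}$ with $\Lambda''$ near $\Lambda'$, directly in the transcendental setting, since Lemma~\ref{lem: strictly forward invariant} only addresses the finite-degree approximations. Uniform continuity of the exponential map on $\overline{K'}$ should suffice, but the resulting openness statement for $\mathcal{V}_\infty$ in parameter space—namely that strict invariance of a bounded convex set is an open condition on $\Lambda$—needs to be stated and proved carefully so as to genuinely place $\Lambda'$ in the interior of $\mathcal{V}_\infty$.
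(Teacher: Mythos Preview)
Your argument is correct and matches the paper's approach: the paper likewise obtains $\hat\Lambda(\theta)\in\mathcal V_\infty$ from the forward invariance of $T_\theta$ (via Lemma~\ref{lem: attracting/neutral fixed points}) and $\hat\Lambda(\theta)\notin\mathcal U_\infty$ from Lemma~\ref{lem:neutral fixed point exclusion}, then passes directly to $\hat\Lambda(\theta)\in\partial\mathcal U_\infty$. You are simply more explicit about the intermediate inclusion $\hat\Lambda(\theta)\in\overline{\mathcal U_\infty}$, and your anticipated obstacle evaporates once you observe that $E_\Lambda=(\Lambda/\Lambda')\,E_{\Lambda'}$: since $E_{\Lambda'}(\overline{K'})$ is compact in $\Int(K')$ and $K'$ is bounded convex with $0\in K'$, a small multiplicative perturbation keeps the image inside $\Int(K')$, so Lemma~\ref{lem: basic properties} gives $\Lambda\in\mathcal V_\infty$ for all nearby $\Lambda$.
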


\begin{proof}
    Without loss of generality it is sufficient to prove that $\Gamma\subseteq \partial\mathcal{U}_\infty$, as the dynamics for $\Lambda$ and $\overline{\Lambda}$ are conjugated. If $\theta=0$, then $\hat\Lambda(0)=e$ and thus $\hat\Lambda(0)\in\partial\mathcal{U}_\infty$. So for the rest we may assume that $\theta>0$, and therefore $\hat\Lambda(\theta)\notin \mathbb{R}$.
    
    By the previous lemma we know that for any sufficiently small $\theta$, the region $T_\theta$ is a convex forward $E_{\hat\Lambda}$-invariant set, hence $\hat\Lambda\in \mathcal{V}_\infty$.
    
    On the other hand, we know that $T_\theta$ contains $\hat Z$, a neutral fixed point of $H_{\hat \Lambda,\hat c}$, therefore by Lemma~\ref{lem:neutral fixed point exclusion} $\hat\Lambda\notin \mathcal{U}_\infty$. In particular, $\hat\Lambda\in\partial \mathcal{U}_\infty$.
\end{proof}

Using the computer it can be seen that the inequalities laid out in Lemma~\ref{lem: T forward invariant} needed for $T_\theta$ to be forward invariant hold up until at least $\theta = 0.18$. This implies that $\theta_{\text{max}}$ is at least $0.18$. In Figure~\ref{fig:cardioid} the curve $\Gamma$ is drawn up until this angle.

\begin{remark} In \cite{Buys21} the second author questioned whether the closure of the zeros of all spherically regular trees equals the closure of the zeros of all graphs, for the same degree bound. We do not resolve this question in this paper, but let us observe that for any $\Lambda_0\in\Gamma$ and any $\varepsilon>0$ the ball $B_{\varepsilon/d}(\Lambda_0/d)$ contains zeros of spherically regular trees for $d$ sufficiently large. 

%To see this, we reformulate the description of $\Gamma$ \pjotr{This is not correct, it should be an inclusion.}:
%\[
%    \Gamma=\{\Lambda ~:~ \exists c\in [c_{\min},1] \textrm{ s.t. $H_{\Lambda,c}$  has a neutral fixed point of multiplier $1$}\}
%\]
%for some $c_{\min}>0$. 
To see this let us fix a $\Lambda_0\in\Gamma$, and fix the corresponding $c$ for which $H_{\Lambda_0,c}$ has a neutral fixed point $Z$ of multiplier $1$.  Note that $H_{\Lambda,c}$ cannot have a persistently neutral fixed point in the neighborhood of $\Lambda_0$. It follows that in a punctured neighborhood of $\Lambda_0$, the corresponding fixed points are given by a multi-valued holomorphic function, where the number of points equals the multiplicity of the parabolic fixed point. The corresponding multipliers $\phi_i(\Lambda)$ are therefore also given by a multi-valued holomorphic function, and all converge to $1$ as $\Lambda \rightarrow \Lambda_0$. The function $\Phi(\Lambda) = \prod_i (\phi_i(\lambda) - 1)$ therefore extends holomorphically to a full neighborhood of $\Lambda_0$, and has an isolated zero at $\Lambda = \Lambda_0$.

For fixed $d$ define 
\[
h_{\lambda,c} := f_{d,\lambda}\circ f_{\lfloor cd\rfloor,\lambda},
\]
and recall that $d\cdot h_{\Lambda/d,c}(\bullet/d)$ converges locally uniformly to $H_{\Lambda, c}$. It follows from the implicit function theorem that for large $d$ the function $d\cdot h_{\Lambda/d,c}(\bullet/d)$ has fixed points near the fixed points of $H_{\Lambda, c}$, for $\Lambda$ in a small annulus around $\Lambda_0$, whose multipliers are close to the multipliers of $H_{\Lambda,c}$. Hence for large $d$ there exist holomorphic functions $\Phi_d = \prod_i(\phi_{i,d} - 1)$, in terms of the corresponding multipliers $\phi_{i,d}$,  converging to the above function $\Phi$. As above, the functions $\Phi_d$ can be extended to a full neighborhood of $\Lambda_0$.

It follows that for large $d$ the functions $\Phi_d$ must also vanish at some point $\Lambda_d$ in the neighborhood of $\Lambda_0$, meaning $h_{\Lambda/d,c}$ has a neutral fixed point with multiplier $1$ for some $\lambda_d = \Lambda_d/d$  close to the parameter $\Lambda_0/d$.

By Lemma~13 of \cite{Buys21} it follows that $\lambda_d$ lies in the closure of the zeros of spherically regular trees. 

As a corollary, by choosing $\varepsilon>0$ small enough and $d$ large enough, we have a spherically regular tree with a zero in the interior of $C_d$, which resolves an other question of \cite{Buys21} for large degrees.
\end{remark}

\bibliographystyle{amsalpha}
\bibliography{indbib}

\end{document}